\newcommand{\C}{\mathbb{C}}
\newcommand{\N}{\mathbb{N}}
\renewcommand{\P}{\mathbb{P}}
\newcommand{\R}{\mathbb{R}}
\newcommand{\Z}{\mathbb{Z}}
\newcommand{\fg}{\mathfrak{g}}
\newcommand{\fk}{\mathfrak{k}}
\newcommand{\cA}{\mathcal{A}}
\newcommand{\cC}{\mathcal{C}}
\newcommand{\cD}{\mathcal{D}}
\newcommand{\cE}{\mathcal{E}}
\newcommand{\cG}{\mathcal{G}}
\newcommand{\cK}{\mathcal{K}}
\newcommand{\cM}{\mathcal{M}}
\newcommand{\cO}{\mathcal{O}}
\newcommand{\cP}{\mathcal{P}}
\newcommand{\cQ}{\mathcal{Q}}
\newcommand{\cS}{\mathcal{S}}
\newcommand{\cT}{\mathcal{T}}
\newcommand{\cU}{\mathcal{U}}
\newcommand{\cV}{\mathcal{V}}
\renewcommand{\a}{\alpha}
\renewcommand{\b}{\beta}
\newcommand{\g}{\gamma}
\renewcommand{\d}{\delta}
\newcommand{\e}{\varepsilon}
\newcommand{\s}{\sigma}
\renewcommand{\i}{\sqrt{-1}}
\newcommand{\p}{\partial}
\newcommand{\bp}{\bar{\partial}}
\newcommand{\ddt}{\frac{d}{dt}}
\newcommand{\ie}{{\rm i.e.\ }} 
\renewcommand{\Re}{\mathrm{Re}}
\renewcommand{\Im}{\mathrm{Im}}
\DeclareMathOperator{\Aut}{Aut}
\DeclareMathOperator{\ch}{ch}
\DeclareMathOperator{\End}{End}
\DeclareMathOperator{\FS}{FS}
\DeclareMathOperator{\GL}{GL}
\DeclareMathOperator{\Gr}{Gr}
\DeclareMathOperator{\Herm}{Herm}
\DeclareMathOperator{\Hom}{Hom}
\DeclareMathOperator{\Id}{Id}
\DeclareMathOperator{\inte}{int}
\DeclareMathOperator{\Ker}{Ker}
\DeclareMathOperator{\Lie}{Lie}
\DeclareMathOperator{\MA}{MA}
\DeclareMathOperator{\op}{op}
\DeclareMathOperator{\rank}{rank}
\DeclareMathOperator{\SU}{SU}
\DeclareMathOperator{\Supp}{Supp}
\DeclareMathOperator{\Tr}{Tr}
\DeclareMathOperator{\Uni}{U}
\renewcommand{\leq}{\leqslant}
\renewcommand{\geq}{\geqslant}
\renewcommand{\hat}{\widehat}
\renewcommand{\tilde}{\widetilde}
\numberwithin{equation}{section}       
\theoremstyle{definition}
{
\newtheorem{prop} {Proposition} [section]
\newtheorem{thm}[prop] {Theorem} 
\newtheorem{dfn}[prop] {Definition}
\newtheorem{lem}[prop] {Lemma}
\newtheorem{cor}[prop]{Corollary}

\newtheorem{exam}[prop]{Example}
}
\theoremstyle{remark}
\newtheorem*{ackn}{\bf{Acknowledgment}} 
\newtheorem{rk}[prop]{Remark}
\title{$J$-equation on holomorphic vector bundles} 
\date{\today}
\author[R. Takahashi]{Ryosuke Takahashi}
\address{Mathematical Institute\\
 Tohoku University\\
6-3\\
Aramaki Aza-Aoba\\
Aoba-ku\\
Sendai\\
980-8578\\
 JAPAN}
\email{ryosuke.takahashi.e1@tohoku.ac.jp}
\subjclass[2020]{Primary 53C55; Secondary 35A01}
\keywords{$J$-equation, holomorphic vector bundle, deformed Hermitian--Yang--Mills equation}
\begin{document}
\maketitle
\begin{abstract}
We introduce the $J$-equation on holomorphic vector bundles over compact K\"ahler manifolds and investigate some fundamental properties as well as examples of solutions. In particular, we provide an algebraic condition called (asymptotic) $J$-stability in terms of subbundles on compact K\"ahler surfaces, and a numerical criterion on vortex bundles via dimensional reduction. Also, we discuss an application for the vector bundle version of the deformed Hermitian--Yang--Mills equation in the small volume regime.
\end{abstract}
\tableofcontents

\section{Introduction}
Let $(X,\omega)$ be an $n$-dimensional compact K\"ahler manifold and $L$ an ample line bundle on $X$. We say that a Hermitian metric $h$ on $L$ with positive curvature $F_h \in c_1(L)$ solves the {\it $J$-equation} if it satisfes
\begin{equation} \label{J-equation on line bundles}
\frac{[\omega] \cdot c_1(L)^{n-1}}{c_1(L)^n} F_h^n-\omega F_h^{n-1}=0.
\end{equation}
In general, the equivalence of the stability and the solvability of an equation is an important problem in geometry. One of the pioneering work in this direction is the resolution of the Kobayashi--Hitchin correspondence by Donaldson--Uhlenbeck--Yau \cite{Don85,UY86}. Inspired by this work, Donaldson \cite{Don03} proposed many equations, including the $J$-equation as above, by using the moment map interpretation. Simultaneously, Chen \cite{Che00} discovered the equation \eqref{J-equation on line bundles} in the study of constant scalar curvature K\"ahler (cscK) metrics and described a spectacular application of \eqref{J-equation on line bundles} to the existence problem of cscK metrics, by using the Chen--Tian formula for the Mabuchi K-energy.

Another motivation for studying \eqref{J-equation on line bundles} is the relation to the {\it deformed Hermitian--Yang--Mills (dHYM) equation}
\begin{equation} \label{dHYM equation on line bundles}
\Im \big( e^{-\i \Theta} (\omega+\i F_h)^n \big)=0,
\end{equation}
where $\Theta$ is a constant. As observed in \cite{CXY17}, one can obtain \eqref{J-equation on line bundles} as the {\it small volume limit} of the dHYM equation at least in a formal level, \ie replacing $\omega \mapsto \e \omega$, one can easily observe that, up to rescaling, the dHYM equation on $L$ is of the form
\[
\frac{[\omega] \cdot c_1(L)^{n-1}}{c_1(L)^n} F_h^n-\omega F_h^{n-1}=O(\e)
\]
as $\e \to 0$. In this sense, the dHYM equation is an asymptotic version of the $J$-equation. The equation \eqref{dHYM equation on line bundles} first appeared in the physics literature \cite{MMMS00}, and \cite{LYZ01} from mathematical side as the mirror object to a special Lagrangian in the setting of Strominger--Yau--Zaslow \cite{SYZ96} mirror symmetry. It is believed that the solvability of the dHYM equation is related to some stability notions in algebraic geometry, in particular, the stability in the sense of Bridgeland \cite{Bri07} as an object in the derived category of coherent sheaves $D^b {\rm Coh}(X)$ (for instance, see \cite{CS20} for more details). Meanwhile, the existence of solutions in the supercritical phase case $\Theta>(n-2)\frac{\pi}{2}$ is well understood by the Nakai--Moishezon type criterion (see \cite{Che21,CLT21,CJY20,DP21} for the recent progress).

For a higher rank holomorphic vector bundle $E$, Collins--Yau \cite{CY18} suggested the following natural extension of \eqref{dHYM equation on line bundles}
\begin{equation} \label{dHYM equation}
\Im \big(e^{-\i \Theta}(\omega \Id_E+\i F_h)^n \big)=0,
\end{equation}
as an equation for $\End(E)$-valued $(n,n)$-forms on $X$, where $\Theta$ is a constant and $F_h$ is the curvature of a Hermitian metric $h$ on $E$. As far as the author's knowledge, there are no non-trivial examples until the recent work \cite{DMS20}, in which they studied the {\it large volume limit} $\omega \mapsto k \omega$, and relates the solvability of \eqref{dHYM equation} with a certain asymptotic Bridgeland stability condition as $k \to \infty$ when $E$ is simple and sufficiently smooth\footnote{More generally, they considered the ``$Z$-critical connections'' including the dHYM connections as a special case.}. Correa \cite{Cor23} also constructed examples of dHYM connections on higher rank holomorphic vector bundles over $\P(T_{\P^2})$. In the large volume limit $k \to \infty$, up to rescaling, the equation \eqref{dHYM equation} formally converges to the Hermitian--Einstein equation
\[
\omega^{n-1} F_h-\frac{[\omega]^{n-1} \cdot c_1(E)}{[\omega]^n \cdot \rank E} \omega^n \Id_E=O(k^{-1}).
\]
Indeed, if we consider Mumford stable bundles $E$ as a special case, then we can obtain the solution to \eqref{dHYM equation} as the small deformation of the solution to the Hermitian--Einstein equation \cite[Section 4.1]{DMS20}.

In this paper, motivated by the work \cite{DMS20}, we introduce a vector bundle version of the $J$-equation, and study some basic properties as well as examples of them. Let $E$ be a holomorphic vector bundle of rank $r$ over an $n$-dimensional compact K\"ahler manifold $(X,\omega)$ satisfying
\begin{equation} \label{positivity for Chern characters}
\ch_n(E)>0, \quad [\omega] \cdot \ch_{n-1}(E)>0.
\end{equation}
Let $\Herm(E)$ be the space of all Hermitian metrics on $E$. Then we say that a Hermitian metric $h \in \Herm(E)$ solves the $J$-equation if it satisfies
\begin{equation} \label{J-equation}
c F_h^n-\omega \Id_E \wedge F_h^{n-1}=0
\end{equation}
for some constant $c$. Taking the trace and integrating over $X$, one can easily see that the constant $c$ is uniquely determined as a cohomological invariant
\[
c=\frac{[\omega] \cdot \ch_{n-1}(E)}{n \ch_n(E)}>0.
\]
We note that the equation \eqref{J-equation} gives an extension of \eqref{J-equation on line bundles} to higher rank vector bundles, and coincides with the Hermitian--Einstein equation when $n=1$. Contrary to the case \cite{DMS20}, the equation \eqref{J-equation} is designed so that the dHYM equation \eqref{dHYM equation} reaches to \eqref{J-equation} in the small volume limit. A novelty of our approach is that we can provide examples for dHYM connections with large average angle $\Theta$ whereas all the examples studied in \cite{DMS20} have small average angle from the construction (see Theorem \ref{existence result for the dHYM equation}).

\subsection{Main results}
Now we state our main results. First, we explain there is a moment map/GIT framework associated to the $J$-equation, \ie the $J$-equation can be viewed as the zero of the moment map on a suitable infinite-dimensional symplectic manifold.

\begin{thm} \label{moment map interpretation for the J-equation}
There is a moment map interpretation for $J$-positive solutions to the $J$-equation \eqref{J-equation}.
\end{thm}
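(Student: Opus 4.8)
The plan is to realize equation \eqref{J-equation} as the zero locus of a moment map, adapting the gauge-theoretic framework of Atiyah--Bott and Donaldson \cite{Don85,Don03} to the nonlinear curvature operator on its left-hand side. Fix the underlying $C^{\infty}$ bundle together with a background Hermitian metric $h_{0}$ on $E$, and let $\cA^{1,1}$ be the space of integrable $h_{0}$-unitary connections on $E$; by the Chern correspondence this is the space of holomorphic structures on $E$, a formally infinite-dimensional complex manifold (complex structure induced by that of $X$) with tangent spaces modelled on $\Omega^{1}(X,\fu(E))$ (real $1$-forms valued in $h_{0}$-skew-Hermitian endomorphisms), on which the unitary gauge group $\cG=\Gamma(X,\Uni(E))$ acts with complexification $\cG^{\C}=\Gamma(X,\GL(E))$. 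Recall that the $\cG^{\C}$-orbits are the isomorphism classes of holomorphic structures and that $\cG^{\C}/\cG\cong\Herm(E)$ for a fixed holomorphic structure (Donaldson's trick), under which the curvature of the Chern connection at a point of the orbit becomes the curvature $F_{h}$ of the corresponding metric. Finally let $\cA^{1,1}_{+}\subset\cA^{1,1}$ be the open subset on which the $J$-positivity condition holds; concretely this should be the positivity of the $(n-1,n-1)$-form
\[
\eta(F_{A}):=n c\, F_{A}^{\,n-1}-(n-1)\,\o\wedge F_{A}^{\,n-2},
\]
which is the ellipticity locus of \eqref{J-equation} and, as will be seen, the locus on which the $2$-form introduced below is a genuine symplectic form. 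Here $c$ is the cohomological constant of \eqref{J-equation}.

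On $\cA^{1,1}_{+}$ I would put the $2$-form
\[
\Omega_{A}(a,b)=\int_{X}\Tr\big(a\wedge b\wedge\eta(F_{A})\big),\qquad a,b\in T_{A}\cA^{1,1}_{+},
\]
where the $\End(E)$-factors in $\eta(F_{A})$ are distributed symmetrically (both among themselves and with $a,b$) so that $\Omega$ is skew-symmetric. The first step is to check that on $\cA^{1,1}_{+}$ the form $\eta(F_{A})$ is positive in the sense that makes $\Omega$ nondegenerate and compatible with the complex structure of $\cA^{1,1}$, so that $(\cA^{1,1}_{+},\Omega)$ is an (infinite-dimensional) K\"ahler manifold; for $r=1$ and for $\eta\equiv\o^{n-1}\Id_{E}$ this reduces to the known positivity statements in the line bundle $J$-equation and in Hermitian--Einstein theory. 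The second step is to prove $d\Omega=0$: the only real input is the Bianchi identity $d_{A}F_{A}=0$, which forces the variation of $\eta(F_{A})$ along $\cA^{1,1}$ to be $d_{A}$-exact, after which closedness follows by the same manipulation as in those two model cases. That $\cG$ preserves $\Omega$ (and $\eta$) is immediate, since everything is built tautologically out of the connection.

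The central step is the moment map computation. Define $\mu\colon\cA^{1,1}_{+}\to(\Lie\cG)^{\ast}$ by
\[
\langle\mu(A),\xi\rangle=\int_{X}\Tr\big(\xi\,(c\,F_{A}^{\,n}-\o\,\Id_{E}\wedge F_{A}^{\,n-1})\big),\qquad \xi\in\Lie\cG,
\]
and verify the defining relation $d\langle\mu,\xi\rangle=\iota_{X_{\xi}}\Omega$, where the fundamental vector field satisfies $X_{\xi}=-d_{A}\xi$ at $A$. Indeed, differentiating the $\End(E)$-valued $(n,n)$-form $c\,F_{A}^{\,n}-\o\wedge F_{A}^{\,n-1}$ in a direction $a$, pairing against $\Tr(\xi\,\cdot)$ and integrating by parts with the Bianchi identity produces exactly $-\int_{X}\Tr\big(d_{A}\xi\wedge a\wedge\eta(F_{A})\big)=\Omega_{A}(X_{\xi},a)$; this is precisely what the combinatorial constants $nc$ and $n-1$ in $\eta$ are chosen for. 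Equivariance of $\mu$ under the adjoint/coadjoint actions follows from the cyclicity of the trace, and since $c$ is normalized so that $\int_{X}\Tr\big(c\,F_{A}^{\,n}-\o\wedge F_{A}^{\,n-1}\big)=0$ for every $A$, the map $\mu$ automatically lands in the annihilator of the constant endomorphisms, so no central correction term is needed.

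It then follows that $\mu^{-1}(0)\cap\cA^{1,1}_{+}$ consists precisely of the $J$-positive integrable connections whose curvature solves \eqref{J-equation}; transporting this through Donaldson's trick, a zero of $\mu$ lying in a fixed $\cG^{\C}$-orbit corresponds to a $J$-positive Hermitian metric solving the $J$-equation on the associated holomorphic bundle, and the resulting Kempf--Ness/GIT picture is exactly what the stability notions introduced later are meant to capture. The main obstacle I anticipate is not conceptual but the bookkeeping behind steps one and two in the nonabelian setting: because $F_{A}$ is $\End(E)$-valued, the powers $F_{A}^{\,k}$, their first variations, the symmetrization that makes $\Omega$ skew, the equivalence $\eta(F_{A})>0 \Leftrightarrow$ nondegeneracy, and the various integrations by parts must all be handled with care about the ordering of endomorphism factors, and one must confirm that the commutator terms generated along the way vanish under the trace and the integral over $X$. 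Once this is organized, closedness of $\Omega$ and the moment map identity reduce, via the Bianchi identity, to exactly the computations already available from the line bundle $J$-equation and from Hermitian--Einstein theory.
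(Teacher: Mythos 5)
Your proposal is correct and follows essentially the same route as the paper: the paper also restricts to the $J$-positive locus of integrable connections, takes as K\"ahler form the symmetrized pairing $\Omega_D(a,b)=-\frac{1}{2\pi}\int_X\Tr\big(c\sum_k F_D^k a F_D^{n-1-k}b-\omega\Id_E\sum_k F_D^k a F_D^{n-2-k}b\big)$ (arising as the imaginary part of the Hermitian inner product defined by $J$-positivity), proves closedness on the ambient affine space via the Bianchi identity, and verifies that $\mu_D(v)=\i\int_X\Tr\big(v(cF_D^n-\omega\Id_E F_D^{n-1})\big)$ is the moment map using $\delta_aF_D=\frac{\i}{2\pi}Da$ and integration by parts. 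The only caution is that in higher rank the $J$-positivity is by definition the positivity of the $(n,n)$-form tested against all $a''\in\Omega^{0,1}(\End E)$ (not literally positivity of an $(n-1,n-1)$-form as in the line bundle case), which is exactly the statement that your symmetrized $\Omega$ is positive, as your symmetrization remark already anticipates.
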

The ``$J$-positivity'' is a positivity concept for Hermitian metrics on holomorphic vector bundles which guarantees that the symplectic form is non-degenerate. Also when $n=2$, we introduce the ``$J$-Griffiths positivity'' as a weaker version of the $J$-positivity (see Section \ref{Positivity concepts of Hermitian metrics on vector bundles} for more details).

For a rank-$2$ vector bundle $E$ over a compact K\"ahler surface $(X,\omega)$, we study some algebraic obstructions for the $J$-equation as an analogue of the Mumford stability for the Hermitian--Einstein equation. We say that $E$ is {\it $J$-semistable} if the inequality
\[
\ch_2(\cS) \cdot \big( [\omega] \cdot \ch_1(E) \big) \leq \big( [\omega] \cdot \ch_1(\cS) \big) \cdot \ch_2(E)
\]
holds for all coherent subsheaves $\cS \subset E$ with $\rank \cS>0$. Also we say that $E$ is {\it $J$-stable} if the above inequality is strict for all $\cS$ with $\rank \cS=1$.

\begin{thm} \label{J-stability for rank-2 bundles over surfaces}
Let $E$ be a holomorphic vector bundle of rank-$2$ over a compact K\"ahler surface $(X,\omega)$ satisfying \eqref{positivity for Chern characters}. Assume that $E$ admits a $J$-Griffiths positive Hermitian metric $h$ solving \eqref{J-equation}. Then $E$ is $J$-semistable for all subbundles. Moreover, $E$ is $J$-stable for all subbundles if it is indecomposable.
\end{thm}

Then we study some existence and non-existence results for the $J$-equation. First, we study an example on projective spaces, which together with Lemma \ref{small deformation} gives the first non-trivial example for the $J$-equation. 

\begin{thm} \label{J equation on projective spaces}
Let $H$ be the Hermitian metric on the holomorphic tangent bundle $T' \C\P^n$ induced from the Fubini--Study metric $\omega_{\FS} \in c_1(\cO(1))$. Then $T' \C\P^n$ satisfies \eqref{positivity for Chern characters} with respect to $\omega_{\FS}$ and $H$ is a solution to the $J$-equation
\[
F_H^n-\omega_{\FS} \Id_{T' \C\P^n} F_H^{n-1}=0.
\]
Moreover, when $n=2$, $H$ is $J$-positive, and for any K\"ahler form $\omega$ on $\C\P^2$ which is sufficiently close to $\omega_{\FS}$, there is a $J$-positive solution to \eqref{J-equation} with respect to $\omega$.
\end{thm}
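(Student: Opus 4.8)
The plan is to break Theorem \ref{J equation on projective spaces} into three separate tasks: (1) verifying the positivity conditions \eqref{positivity for Chern characters} for $T'\C\P^n$ with respect to $\omega_{\FS}$; (2) showing that the Fubini--Study-induced metric $H$ solves the $J$-equation \eqref{J-equation}; and (3) in the case $n=2$, checking $J$-positivity of $H$ and deducing the existence of solutions for nearby K\"ahler forms via a deformation argument. For task (1), since $\ch_\bullet(T'\C\P^n)$ is a polynomial in the hyperplane class $c_1(\cO(1))=:\eta$, one computes $\ch(T'\C\P^n)$ explicitly from the Euler sequence $0 \to \cO \to \cO(1)^{\oplus(n+1)} \to T'\C\P^n \to 0$, so that $\ch(T'\C\P^n)=(n+1)e^{\eta}-1$. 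Reading off the degree-$n$ and degree-$(n-1)$ parts and pairing with $[\omega_{\FS}]=\eta$ gives positive multiples of $\eta^n$, which is positive on $\C\P^n$; this is routine.

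For task (2), the key observation is that the curvature $F_H$ of the Fubini--Study metric on $T'\C\P^n$ can be written, in terms of the homogeneity of the situation under $\SU(n+1)$, in a form that makes $F_H^n$ and $\omega_{\FS}\Id\wedge F_H^{n-1}$ proportional as $\End(T'\C\P^n)$-valued $(n,n)$-forms. Concretely, I would use the well-known identity that the Chern--Ricci / curvature form of $(T'\C\P^n,H)$ satisfies $\i F_H = \omega_{\FS}\otimes\Id + (\text{something built from }\omega_{\FS})$; more precisely, at a point one diagonalizes and uses the fact that $\C\P^n$ is a rank-one symmetric space, so the curvature endomorphism in any unitary frame adapted to a tangent direction has eigenvalues that are explicit multiples of $\omega_{\FS}$. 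Then $F_H^n$ is a fixed scalar $(n,n)$-form times $\Id$ plus lower-eigenvalue contributions, and the wedge-algebra identity $cF_H^n=\omega_{\FS}\Id\wedge F_H^{n-1}$ reduces to a finite combinatorial identity among the eigenvalues, which must come out consistent with the cohomological value $c=\frac{[\omega_{\FS}]\cdot\ch_{n-1}(T'\C\P^n)}{n\,\ch_n(T'\C\P^n)}$ forced by taking trace and integrating. I expect this computation to be the technical heart of the argument: one must be careful that the $J$-equation is an identity of $\End(E)$-valued forms, not merely its trace, so the homogeneity under the isometry group $\SU(n+1)$ (acting transitively with isotropy $\Uni(n)$ acting irreducibly on the tangent space) is what upgrades the scalar identity to the endomorphism identity --- any $\SU(n+1)$-invariant $\End(T'\C\P^n)$-valued $(n,n)$-form is automatically a constant multiple of $\Id$ times the volume form.

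For task (3) with $n=2$: first check $J$-positivity of $H$, which by the definition in Section \ref{Positivity concepts of Hermitian metrics on vector bundles} amounts to a pointwise positivity of a hermitian form built from $F_H$ and $\omega_{\FS}$; by $\SU(3)$-homogeneity it suffices to verify this at one point, where it becomes a finite linear-algebra check on the explicit Fubini--Study curvature eigenvalues. Once $J$-positivity is known, $H$ is a $J$-positive solution for $\omega=\omega_{\FS}$, and the openness statement follows from Lemma \ref{small deformation} (the ``small deformation'' lemma referenced in the introduction): the $J$-equation is an elliptic equation in $h$ whose linearization at a $J$-positive solution is invertible (the $J$-positivity is exactly the condition making the symplectic form, hence the linearized operator, non-degenerate, cf. Theorem \ref{moment map interpretation for the J-equation}), so the implicit function theorem in suitable H\"older spaces produces, for each $\omega$ near $\omega_{\FS}$, a Hermitian metric $h_\omega$ near $H$ solving \eqref{J-equation}; shrinking the neighborhood preserves $J$-positivity since it is an open condition. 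The main obstacle is task (2), specifically making the endomorphism-valued wedge identity transparent rather than a brute-force $4\times 4$ (or $n^2\times n^2$) computation; the homogeneity reduction is what I would lean on to keep it clean.
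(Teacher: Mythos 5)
Your outline is correct, and for the central identity it takes a genuinely different route from the paper. The paper proves that $H$ solves the equation by brute force: it computes $F_H$ explicitly in Euclidean coordinates at $z=0$, obtaining $(F_H)_{\a\b}=\omega_{\FS}\d_{\a\b}+\tfrac{\i}{2\pi}dz^\a dz^{\bar\b}$, and then evaluates $F_H^n$ and $\omega_{\FS}\Id F_H^{n-1}$ by explicit combinatorial wedge identities, showing both equal $\big(1+\tfrac1n\big)\omega_{\FS}^n\d_{\a\b}$; the cohomological identity $n\ch_n=[\omega_{\FS}]\cdot\ch_{n-1}=\tfrac{n+1}{(n-1)!}$ (hence $c=1$) falls out of that same computation. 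Your alternative -- $\SU(n+1)$-equivariance of $F_H$ plus Schur's lemma for the irreducible isotropy action on $T_p\C\P^n$, forcing both $F_H^n$ and $\omega_{\FS}\Id\wedge F_H^{n-1}$ to be constant scalars times $\Id\otimes\omega_{\FS}^n$, with the scalars then pinned down by trace-and-integrate against the Euler-sequence value $\ch(T'\C\P^n)=(n+1)e^{\eta}-1$ -- is valid and cleaner for establishing the endomorphism-valued identity, since it needs no wedge combinatorics at all; the loose remarks about ``diagonalizing the curvature eigenvalues'' are dispensable once you lean on the invariance argument. What your shortcut does not buy you is the $n=2$ part: $J$-positivity quantifies over arbitrary (non-invariant) $a''\in\Omega^{0,1}(\End(T'\C\P^2))$, so homogeneity only reduces it to a pointwise check, and you still need the explicit curvature expression at a point -- in the paper this check is the one place requiring real work (a Cauchy--Schwarz estimate in the components of $a''$ together with an analysis of the equality case to get strict positivity). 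Two small corrections for the last step: the invertibility of the linearization in Lemma \ref{small deformation} uses not only $J$-positivity (which gives ellipticity and self-adjointness with trivial kernel among $D''_0$-closed directions) but also the simplicity of the bundle, so you should record that $T'\C\P^2$ is simple, exactly as the paper does before invoking that lemma; and the solution produced for nearby $\omega$ is $J$-positive because $J$-positivity is open and the solution depends continuously on $\omega$, which is the statement you want rather than ``shrinking the neighborhood''.
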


It seems to be hard to extend Theorem \ref{J-stability for rank-2 bundles over surfaces} directly to higher rank case, that is why we will consider the asymptotic version of $J$-stability and study the relation to the solvability of the $J$-equation: let $(X,\omega)$ be a compact K\"ahler surface and $L$ an ample line bundle over $X$. Let $\eta$ be an another K\"ahler form on $X$ chosen in Theorem \ref{existence of a solution on twisted bundles}. We recall that a coherent sheaf $\cE$ over a compact K\"ahler surface $(X,\eta)$ is said to be {\it Mumford semistable} with respect to $\eta$ if the inequality
\[
\mu_\eta(\cS) \leq \mu_\eta(\cE)
\]
holds for all coherent subsheaves $\cS \subset \cE$ with $\rank \cS>0$, where
\[
\mu_\eta(\cS):=\frac{c_1(\cS) \cdot [\eta]}{\rank \cS}
\]
is the {\it Mumford slope} of $\cS$. Moreover, $\cE$ is said to be {\it Mumford stable} if the above inequality is strict for all $\cS$ with $0<\rank \cS<\rank \cE$. Now for any coherent sheaf $\cS$ with $\rank \cS>0$ over $(X,\omega)$, we define the {\it asymptotic $J$-slope} of $\cS$ by
\[
\varphi_k(\cS):=\frac{\ch_2(\cS \otimes L^k)}{[\omega] \cdot \ch_1(\cS \otimes L^k)},
\]
where we note that $[\omega] \cdot \ch_1(\cS \otimes L^k)>0$ if $k$ is large enough by the ampleness of $L$. We say that a coherent sheaf $\cE$ is {\it asymptotically $J$-semistable} (with respect to $(\omega,L)$) if for all coherent subsheaves $\cS \subset \cE$ with $\rank \cS>0$, the inequality
\[
\varphi_k(\cS) \leq \varphi_k(\cE)
\]
holds for sufficiently large integer $k$, where the magnitude of $k$ is allowed to depend on $\cS$. Moreover, we say that $\cE$ is {\it asymptotically $J$-stable} if the above inequality is strict for all $\cS$ with $0<\rank \cS<\rank \cE$. The next theorem says that we can actually solve the equation under some stability condition including both subvarieties and subsheaves.

\begin{thm} \label{existence of a solution on twisted bundles}
Let $L$ be an ample line bundle over a compact K\"ahler surface $(X,\omega)$ satisfying
\begin{equation} \label{NM criterion}
\big( 2([\omega] \cdot c_1(L)) c_1(L)-c_1(L)^2 [\omega] \big) \cdot Y>0
\end{equation}
for any curve $Y \subset X$. In particular, we have a solution $\chi \in c_1(L)$ satisfying the $J$-equation with respect to $\omega$
\begin{equation} \label{J-equation on surfaces}
\frac{[\omega] \cdot c_1(L)}{c_1(L)^2} \chi^2-\omega \chi=0
\end{equation}
with a K\"ahler form $\eta:=2([\omega] \cdot c_1(L)) \chi-c_1(L)^2 \omega>0$\footnote{The condition $\eta>0$ is nothing but the $J$-positivity condition for $\chi$. For line bundles, any solution to \eqref{J-equation on line bundles} automatically satisfies the $J$-positivity condition (see Remark \ref{J-positivity for line bundles}).}. Let $E$ be a holomorphic vector bundle of rank $r$ over $X$ and $h_L \in \Herm(L)$ a Hermitian metric with curvature $\chi$ with the associated connection $D_L$. Let us consider the $J$-equation for $\tilde{h}_k \in \Herm(E \otimes L^k)$ with respect to $\omega$
\begin{equation} \label{J-equation on twisted bundles}
\frac{[\omega] \cdot \ch_1(E \otimes L^k)}{2 \ch_2(E \otimes L^k)} F_{\tilde{h}_k}^2-\omega \Id_E F_{\tilde{h}_k}=0.
\end{equation}
Then we have the following:
\begin{enumerate}
\item The bundle $E \otimes L^k$ satisfies the property \eqref{positivity for Chern characters} with respect to $\omega$ for sufficiently large integer $k$.
\item Assume that there exist a Hermitian metric $h_0 \in \Herm(E)$ and a sequence of $h_0$-compatible integrable connections $D_k$ on $E$ satisfying
\begin{enumerate}
\item The pair $(h_0 \otimes h_L^k,D_k \otimes D_L^k)$ solves \eqref{J-equation on twisted bundles}.
\item $D_k$ are uniformly bounded in $C^1$.
\end{enumerate}
Then for any subbundle $S \subset E$ with $0<\rank S<r$ that is holomorphic with respect to all $D_k$, we have $\varphi_k(S) \leq \varphi_k(E)$ for sufficiently large $k$, with the strict inequality holding on each level $k$ if $E$ is indecomposable with respect to $D_k$.
\item Assume that $E$ is an asymptotically $J$-stable, sufficiently smooth vector bundle and the associated graded object $\Gr(E)$ has at most $2$ Mumford stable components. Then for sufficiently large integer $k$, there exists a $J$-positive Hermitian metric $\tilde{h}_k \in \Herm(E \otimes L^k)$ satisfying \eqref{J-equation on twisted bundles}.
\end{enumerate}
\end{thm}
The equivalence between \eqref{NM criterion} and \eqref{J-equation on surfaces} is essentially due to the Nakai--Moishezon criterion \cite{DP04} and even known in the higher dimensional case \cite{Son20}. The properties (2) and (3) are analogues of those for the Gieseker stability \cite{Leu97} and the asymptotic $Z$-stability \cite{DMS20}. A key observation is that the asymptotic $J$-stability implies Mumford semistability with respect to $\eta$ (Proposition \ref{Mumford stability vs asymptotic J-stability}). Indeed, the connection that we obtained in (3) is of the form $D_k \otimes D_L^k$ where the connections $D_k$ on $E$ are constructed as a small deformation from the weak Hermitian--Einstein connection on $\Gr(E)$. Thus the boundedness of $D_k$ in $C^\infty$ even holds in this case. To prove (3) in the two components case, the standard implicit function theorem is insufficient due to the absence of the weak Hermitian--Einstein connection on $E$. As in \cite{ST20}, an approximation method together with the quantitative implicit function theorem is needed. In Theorem \ref{existence of a solution on twisted bundles}, we impose a lot of technical assumptions although we expect that all the properties actually hold in full generality. Also it would be interesting to extend Theorem \ref{existence of a solution on twisted bundles} to higher dimensional compact K\"ahler manifolds (see Remark \ref{higher dimensional case}). We plan to improve these issues in future work.

Let us consider a compact Riemann surface $\Sigma$ and the rank-$2$ vector bundle
\begin{equation} \label{vortex bundles}
E:=\pi_1^\ast((r_1+1)L) \otimes \pi_2^\ast(r_2(\cO(2))) \oplus \pi_1^\ast (r_1L) \otimes \pi_2^\ast((r_2+1) \cO(2))
\end{equation}                                             
over $\Sigma \times \C\P^1$, where $r_1, r_2 \in \N$ and $\pi_1$ (resp. $\pi_2$) is the projection to $\Sigma$ (resp. $\C\P^1$). We equip $E$ with a holomorphic structure so that the first component $\pi_1^\ast((r_1+1)L) \otimes \pi_2^\ast(r_2(\cO(2)))$ becomes a holomorphic subbundle of $E$. The bundle $E$, referred to as the {\it vortex bundle} is originally studied by Garc\'ia--Prada \cite{Pra93} in the Hermitian--Einstein equation case, and thereafter by Pingali \cite{Pin20} in the vector bundle Monge--Amp\`ere equation case. Let $L$ be a holomorphic line bundle over $\Sigma$ of degree $1$ and $h_\Sigma$ a Hermitian metric on $L$ with curvature $\omega_\Sigma>0$. Then $\SU(2)$ acts on $\Sigma \times \C\P^1$, trivially on $\Sigma$, and the standard way on $\C\P^1$ via the identification $\C\P^1 \simeq \SU(2)/\Uni (1)$. Since this action lifts to $E$ in the standard manner, it is natural to consider $\SU(2)$-invariant solutions to \eqref{J-equation}. Consider the $\SU(2)$-invariant K\"ahler metric $\omega:=s \pi_1^\ast \omega_\Sigma+\pi_2^\ast \omega_{\FS}$ for $s \in \R_{>0}$ (where without loss of generality, we may assume that the coefficient of $\pi_2^\ast \omega_{\FS}$ is $1$ since the equation \eqref{J-equation} is invariant under the scaling of $\omega$).

\begin{thm} \label{existence result on vortex bundles}
Let $E$ be the vortex bundle defined by \eqref{vortex bundles} over a product of Riemann surfaces $\Sigma \times \C\P^1$, and set $\omega:=s \pi_1^\ast \omega_\Sigma+\pi_2^\ast \omega_{\FS}$ ($s \in \R_{>0}$) as above. Then we have the following:
\begin{enumerate}
\item For any $r_1, r_2, s$, the bundle $E$ satisfies the property \eqref{positivity for Chern characters} with respect to $\omega$.
\item If $E$ admits a $J$-Griffiths positive Hermitian metric satisfying \eqref{J-equation} with a non-trivial second fundamental form of $\pi_1^\ast((r_1+1)L) \otimes \pi_2^\ast(r_2(\cO(2)))$, then we have
\[
s<\frac{r_1(r_1+1)}{2r_2(r_2+1)}.
\]
\item Assume that $r_1,r_2,s$ satisfies
\[
r_2 \geq 2, \quad \frac{(2r_1+1)\kappa_0+4r_1}{2(4r_2-\kappa_0)(2r_2+1)}<s<\frac{r_1(r_1+1)}{2r_2(r_2+1)},
\]
where
\[
\kappa_0:=\frac{1}{3} \bigg(2+\sqrt[3]{1232-528 \sqrt{3}}+2 \sqrt[3]{22(7+3 \sqrt{3})} \bigg) \approx 7.2405
\]
denotes the (unique) positive root of the polynomial $\kappa^3-2\kappa^2-28\kappa-72$. Then under a suitable choice of holomorphic structures on $E$, the bundle $E$ admits an $\SU(2)$-invariant, $J$-Griffiths positive Hermitian metric solving \eqref{J-equation}.
\end{enumerate}
\end{thm}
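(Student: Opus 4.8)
I would establish the three claims separately, the first two being formal and the third requiring a dimensional reduction.

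For (1) I would simply compute Chern characters. Writing $A_1:=\pi_1^\ast((r_1+1)L)\otimes\pi_2^\ast\cO(2r_2)$ and $A_2:=\pi_1^\ast(r_1L)\otimes\pi_2^\ast\cO(2r_2+2)$ for the two line-bundle summands and setting $\alpha:=\pi_1^\ast c_1(L)$, $\beta:=\pi_2^\ast c_1(\cO(1))$ (so $\alpha^2=\beta^2=0$, $\alpha\beta=[\mathrm{pt}]$, $[\omega_\Sigma]=\alpha$, $[\omega_{\FS}]=\beta$, $[\omega]=s\alpha+\beta$), and using that $E=A_1\oplus A_2$ as a $C^\infty$ bundle, one gets $\ch_1(E)=(2r_1+1)\alpha+(4r_2+2)\beta$, $\ch_2(E)=2r_2(r_1+1)+2r_1(r_2+1)=2(2r_1r_2+r_1+r_2)$ and $[\omega]\cdot\ch_1(E)=\big(s(4r_2+2)+(2r_1+1)\big)[\mathrm{pt}]$; both are positive for all $r_1,r_2,s$, which is \eqref{positivity for Chern characters}. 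For (2) I would apply Theorem \ref{J-stability for rank 2 bundles over surfaces} to the holomorphic subbundle $A_1\subset E$: a $J$-Griffith positive solution makes $E$ $J$-semistable for subbundles, so
\[
\ch_2(A_1)\cdot\big([\omega]\cdot\ch_1(E)\big)\ \leq\ \big([\omega]\cdot\ch_1(A_1)\big)\cdot\ch_2(E),
\]
and inspecting the equality case in the proof of that theorem, equality would force the second fundamental form of $A_1$ to vanish, so under the present hypothesis the inequality is strict. Since $\ch_2(A_1)=2r_2(r_1+1)$ and $[\omega]\cdot\ch_1(A_1)=(2sr_2+r_1+1)[\mathrm{pt}]$, substituting the numbers from (1) and simplifying, this strict inequality reduces to $2sr_2(r_2+1)<r_1(r_1+1)$, i.e. $s<\tfrac{r_1(r_1+1)}{2r_2(r_2+1)}$.

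For (3) I would use the dimensional reduction of Garc\'ia--Prada \cite{Pra93}, in the form exploited by Pingali \cite{Pin20}. I would fix the $\SU(2)$-equivariant holomorphic structure determined by a nonzero section $\phi\in H^0(\Sigma,L)$ (so that $A_1$ is a holomorphic subbundle and $0\to A_1\to E\to A_2\to 0$ is nonsplit; this is the ``suitable choice'', and presupposes $L$ effective). Then an $\SU(2)$-invariant Hermitian metric on $E$ is the same datum as a pair $(H_1,H_2)$ of Hermitian metrics on $(r_1+1)L$ and $r_1L$ over $\Sigma$, the metrics along $\C\P^1\simeq\SU(2)/\Uni(1)$ being fixed as the invariant (round) ones and all off-diagonal components vanishing by equivariance. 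Writing the Chern curvature of the induced metric on $E$ in block form for the $C^\infty$ splitting $E\simeq A_1\oplus A_2$ and using $\alpha^2=\beta^2=0$, the diagonal blocks become $\pi_1^\ast F_{H_1}+(2r_2+\rho t)\,\pi_2^\ast\omega_{\FS}$ and $\pi_1^\ast F_{H_2}+(2r_2+2-\rho t)\,\pi_2^\ast\omega_{\FS}$, where $t:=|\phi|^2_{H_1,H_2}$ is a function on $\Sigma$ and $\rho>0$ a fixed geometric constant, while the off-diagonal blocks are built from $\p\phi$ and on wedging contribute the term $|\nabla^{1,0}\phi|^2=|\p t|^2/t$. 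Substituting into \eqref{J-equation} and integrating out $\C\P^1$: the off-diagonal equations hold automatically by $\SU(2)$-equivariance (their values lie in equivariant bundles with no invariant sections), and the two diagonal equations become a coupled system on $\Sigma$ for $(H_1,H_2)$. Writing $F_{H_i}=g_i\,\omega_\Sigma$ and using the Poincar\'e--Lelong identity $g_1-g_2=-\Delta\log t$ (away from the zero of $\phi$) to solve for $g_1,g_2$ in terms of $t$ and $|\p t|^2$, this system is equivalent to a single quasilinear second-order elliptic equation for $u:=\log t$ on $\Sigma$ with a prescribed logarithmic singularity at the zero of $\phi$; the $J$-Griffith positivity translates into the open conditions $2c_0(2r_2+\rho t)>1$, $2c_0(2r_2+2-\rho t)>1$ and positivity of the mixed $\Sigma$/$\C\P^1$ curvature block, with $c_0=\tfrac{[\omega]\cdot\ch_1(E)}{2\ch_2(E)}$.

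I would then solve this scalar equation by the continuity method (deforming from a model solution) or by constructing sub- and supersolutions, and verify that the resulting metric is $J$-Griffith positive. The numerical hypotheses enter precisely here: solvability together with the integral consistency constraint forces the ``mass'' of the $J$-vortex to be positive, which is exactly $s<\tfrac{r_1(r_1+1)}{2r_2(r_2+1)}$ (in agreement with (2)); and preservation of $J$-Griffith positivity along the path requires $4r_2-\kappa_0>0$, i.e. $r_2\geq 2$, together with the lower bound $s>\tfrac{(2r_1+1)\kappa_0+4r_1}{2(4r_2-\kappa_0)(2r_2+1)}$, the cubic $\kappa^3-2\kappa^2-28\kappa-72$ arising as the threshold for positivity of the relevant $2\times 2$ curvature matrix (a quadratic positivity condition combined with the quadratic equation \eqref{J-equation}). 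I expect the main obstacle to be exactly this last step: since \eqref{J-equation} is genuinely quadratic in $F_h$ --- unlike the Hermitian--Einstein or vector-bundle Monge--Amp\`ere equations on these bundles --- the reduced equation is quasilinear with curvature-dependent coefficients, and obtaining a priori estimates that simultaneously preserve the open $J$-Griffith positivity, with the sharp constant $\kappa_0$, is the delicate point.
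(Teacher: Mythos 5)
Your parts (1) and (2) are correct and follow exactly the paper's route: the same Chern character computations, and the application of Theorem \ref{J-stability for rank 2 bundles over surfaces} to the holomorphic subbundle $\pi_1^\ast((r_1+1)L)\otimes\pi_2^\ast(r_2\cO(2))$, with strictness coming from the non-trivial second fundamental form.

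For part (3), however, there is a genuine gap: what you give is a plan, and the plan omits (and partly misattributes) precisely the analytic content that the hypotheses $r_2\geq 2$ and the lower bound on $s$ with the constant $\kappa_0$ are there to supply. The paper's reduction is also slightly different from yours: rather than eliminating both metrics in favour of $u=\log|\phi|^2$ (which introduces a prescribed logarithmic singularity at the zero of $\phi$ and a genuinely singular quasilinear equation), the paper takes the invariant ansatz $H=h_1\oplus h_2$ with unknowns a metric $h$ on $L$ and a conformal factor $f$, subtracts the two diagonal equations to express $F_f+r_1\omega_\Sigma$ linearly in terms of $F_h$ (an equation solvable for $f$ up to constants once one checks the integrals match), and substitutes back to obtain the smooth \emph{$J$-vortex equation} \eqref{J-vortex equation} for $h$ alone, with the constraint $|\phi|_h^2\leq 1$ preserved by a maximum-principle argument. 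The decoupling into ``$J$-vortex equation plus linear equation for $f$'' is a key structural step that your singular scalar reduction does not reproduce, and you do not verify that your reduced equation is solvable.

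More importantly, the roles of the numerical hypotheses are not where you place them. In the paper, the exact cubic $\kappa^3-2\kappa^2-28\kappa-72$ does \emph{not} arise from positivity of a curvature matrix: the $J$-Griffith positivity verification only uses the cruder consequence $4cr_2-1>\kappa_0 c$ of the lower bound on $s$. The cubic enters in the \emph{openness} step of the continuity method: the linearization of the $J$-vortex equation is not self-adjoint (because of the terms involving $\p v\wedge\bp|\phi|_h^2+\p|\phi|_h^2\wedge\bp v$), so one must separately show that the kernels of the linearized operator and of its formal adjoint vanish; this is done by a maximum-principle argument applied to $\tilde v=vI$, and the coefficient estimate there is exactly $P>\big(\kappa_0^3-2\kappa_0^2-28\kappa_0-72\big)/\big(\kappa_0(\kappa_0+2)^2\big)=0$. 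Meanwhile the upper bound $s<r_1(r_1+1)/(2r_2(r_2+1))$ is used in \emph{closedness}, via the inequality $\a>1$ which yields a uniform upper bound on $\psi_t$ along the continuity path. Your proposal asserts that ``preservation of $J$-Griffith positivity along the path'' requires these bounds and that the cubic is a curvature-positivity threshold, but supplies no estimates; since the entire content of statement (3) beyond the formal dimensional reduction is these a priori estimates with the sharp constant $\kappa_0$, the argument as written does not establish the theorem.
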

The upper bound for $s$ is a necessary condition to the existence, while the lower bound may not be sharp (see Remark \ref{the lower bound may not be sharp}). As in \cite{Pra93,Pin20}, our strategy is to study the following {\it $J$-vortex equation} for a Hermitian metric $h$ on $L$ obtained as a dimensional reduction of \eqref{J-equation}
\[
\begin{aligned}
F_h=2(1-|\phi|_h^2) \frac{\frac{\i c^2}{\pi} D_h' \phi D_h'' \phi^\ast+s \omega_\Sigma}{(4cr_2-2c|\phi|_h^2-1+4c)(4cr_2+2c|\phi|_h^2-1)},
\end{aligned}
\]
where $\phi$ is any given non-zero holomorphic section of $L$ with $|\phi|_h^2 \leq 1$ which determines the (equivalence class of) extension
\[
0 \to \pi_1^\ast((r_1+1)L) \otimes \pi_2^\ast(r_2(\cO(2))) \to E \to \pi_1^\ast (r_1L) \otimes \pi_2^\ast((r_2+1) \cO(2)) \to 0.
\]
Indeed, from the argument in Section \ref{Vortex bundles}, we know that we can choose any non-zero $\phi$ to solve the $J$-vortex equation. Finally, we show an existence result for the dHYM equation assuming the existence of a solution to \eqref{J-equation}.

\begin{thm} \label{existence result for the dHYM equation}
Let $E$ be a simple holomorphic vector bundle over an $n$-dimensional compact K\"ahler manifold $(X,\omega)$ satisfying the property \eqref{positivity for Chern characters}. Assume that there exists a $J$-positive Hermitian metric $h_0 \in \End(E)$ satisfying the $J$-equation
\[
\frac{[\omega] \cdot \ch_{n-1}(E)}{n \ch_n(E)} F_{h_0}^n-\omega \Id_E F_{h_0}^{n-1}=0.
\]
Then for sufficiently small $\e>0$, the bundle $E$ admits a dHYM-positive Hermitian metric $h_\e$ satisfying the deformed Hermitian--Yang--Mills equation with respect to $\e \omega$
\[
\Im \big(e^{-\i \Theta_\e}(\e \omega \Id_E+\i F_{h_\e})^n \big)=0
\]
for some constant $\Theta_\e$ (depending on $\e$).
\end{thm}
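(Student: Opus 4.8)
The plan is to view the $J$-equation \eqref{J-equation} as the $\e\to 0$ limit of a rescaled dHYM equation with respect to $\e\omega$, and to deform the given $J$-positive solution $h_0$ by the implicit function theorem. First I would fix the phase cohomologically: for $\e\ge 0$ put
\[
Z_\e:=\int_X\Tr\big((\e\omega\Id_E+\i F_h)^n\big),
\]
which depends on $h$ only through $[\omega]$ and the $\ch_\bullet(E)$, hence is a polynomial in $\e$ whose constant term $Z_0$ is a nonzero multiple of $\i^n\ch_n(E)$ by \eqref{positivity for Chern characters}. So $Z_\e\ne 0$ for small $\e$, a continuous branch $\Theta_\e:=\arg Z_\e$ is smooth in $\e$ with $\Theta_0\equiv\frac{n\pi}{2}$, and $\frac{d}{d\e}\Theta_\e\big|_{\e=0}=-\frac{[\omega]\cdot\ch_{n-1}(E)}{\ch_n(E)}=-nc$. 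Fixing a volume form $\mu$, define
\[
\mathcal P_\e(h):=\frac{1}{\e\,\mu}\,\Im\big(e^{-\i\Theta_\e}(\e\omega\Id_E+\i F_h)^n\big),
\]
continued at $\e=0$ by its limit. Expanding in $\e$ and using $\sin\Theta_0=0$ for $n$ even and $\cos\Theta_0=0$ for $n$ odd, the $\End(E)$-valued $(n,n)$-form inside $\Im(\cdot)$ vanishes identically at $\e=0$, so $\mathcal P_\e$ is a genuine second order family, jointly smooth in $(\e,h)$, and the coefficient of $\e$ gives $\mathcal P_0(h)=\frac{\kappa}{\mu}\big(cF_h^n-\omega\Id_E\wedge F_h^{n-1}\big)$ for a nonzero constant $\kappa$; in particular $\mathcal P_0(h_0)=0$ by hypothesis. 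Tracing and integrating, $\int_X\Tr\mathcal P_\e(h)\,\mu=\e^{-1}\Im(e^{-\i\Theta_\e}Z_\e)=0$, so every $\mathcal P_\e$ takes values in the closed subspace $V$ of $h_0$-Hermitian endomorphisms $\psi$ with $\int_X\Tr\psi\,\mu=0$, and $\mathcal P_\e$ is invariant under the constant rescaling $h\mapsto e^t h$.

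Next I would analyse the linearization of $\mathcal P_0$ at $h_0$. Writing $h=h_0e^{\varphi}$ with $\varphi$ an $h_0$-Hermitian endomorphism and using $\frac{d}{dt}\big|_0 F_{h_0e^{t\varphi}}=\bp\p_{h_0}\varphi$, the operator $L_0:=D\mathcal P_0|_{h_0}$ has principal part a nonzero multiple of $\big(ncF_{h_0}^{n-1}-(n-1)\,\omega\wedge F_{h_0}^{n-2}\big)\wedge\bp\p_{h_0}\varphi$, whose symbol is a positive multiple of $\Id_E$ on every nonzero covector precisely because $h_0$ is $J$-positive (for $n=2$ the weaker $J$-Griffith positivity already does this); so $L_0$ is a second order elliptic operator, and it is formally self-adjoint, either from the moment map picture of Theorem \ref{moment map interpretation for the J-equation} or by a direct integration by parts (using $dF_{h_0}^k=d\omega=0$). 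Pairing $L_0\varphi=0$ with $\varphi$ in $L^2$ and integrating by parts gives a nonnegative quadratic form in $\nabla^{h_0}\varphi$ controlled from below by the positive $(n-1,n-1)$-form above, which can vanish only if $\varphi$ is $\nabla^{h_0}$-parallel, hence holomorphic, hence $\varphi=\lambda\Id_E$ with $\lambda\in\R$ since $E$ is simple. Thus $\ker L_0=\R\Id_E$ and, by self-adjointness, the image of $L_0$ is exactly $V$; since $\R\Id_E\cap V=0$, $L_0$ restricts to a Banach space isomorphism $V_{C^{k+2,\b}}\xrightarrow{\sim}V_{C^{k,\b}}$ for any integer $k\ge 0$ and $\b\in(0,1)$.

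Then I would run the implicit function theorem for $F(\e,\varphi):=\mathcal P_\e(h_0e^{\varphi})$ near $(0,0)$ in $[0,\e_0)\times V_{C^{k+2,\b}}$: here $F(0,0)=0$, $F$ is $C^1$ jointly, and $D_\varphi F(0,0)=L_0$ is invertible, so there exist $\e_1>0$ and a $C^1$ curve $\e\mapsto\varphi_\e\in V_{C^{k+2,\b}}$ with $\varphi_0=0$ and $F(\e,\varphi_\e)=0$ on $[0,\e_1)$. Setting $h_\e:=h_0e^{\varphi_\e}$, this is exactly the statement that $h_\e$ solves $\Im\big(e^{-\i\Theta_\e}(\e\omega\Id_E+\i F_{h_\e})^n\big)=0$, i.e. the dHYM equation with respect to $\e\omega$ with constant $\Theta_\e$; elliptic regularity upgrades $h_\e$ to $C^\infty$. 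Finally $\varphi_\e\to 0$ in $C^{k+2,\b}\subset C^2$, so $h_\e\to h_0$ in $C^2$; since $h_0$ is $J$-positive and dHYM-positivity with respect to $\e\omega$ is an open condition that degenerates to $J$-positivity as $\e\to 0$, the metric $h_\e$ is dHYM-positive once $\e$ is small, which proves the theorem.

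The step I expect to be the real obstacle is the invertibility of $L_0$: pinning down the precise curvature combination in its symbol and checking that $J$-positivity of $h_0$ is exactly the ellipticity condition, together with the bookkeeping that forces the image of $L_0$ to be the whole target $V$ (so that no cohomological obstruction survives). The rescaling in the first step is elementary, but must be done with care to confirm that dividing by $\e$ yields the genuine $J$-equation rather than a degenerate one, and one should also check joint smoothness of $F$ up to $\e=0$, which is where smoothness of $\Theta_\e$ — hence $Z_0\ne 0$, i.e. $\ch_n(E)>0$ — is used.
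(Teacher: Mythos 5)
Your proposal is correct and follows essentially the same route as the paper: fix $\Theta_\e$ cohomologically from $Z_\e\neq 0$ (using $\ch_n(E)>0$), divide the dHYM operator by $\e$ so that its $\e\to 0$ limit is exactly the $J$-operator, apply the implicit function theorem at $h_0$ on the trace-normalized slice using ellipticity from $J$-positivity, self-adjointness, and simpleness to invert the linearization, and finally deduce dHYM-positivity of $h_\e$ for small $\e$ from the convergence $h_\e\to h_0$ and the fact that the rescaled positivity form tends to the $J$-positivity form. The only caveat is notational: the principal part of the linearization is the noncommutative sum $c\sum_k F_{h_0}^k(\cdot)F_{h_0}^{n-1-k}-\omega\Id_E\sum_k F_{h_0}^k(\cdot)F_{h_0}^{n-2-k}$ rather than $\big(ncF_{h_0}^{n-1}-(n-1)\omega F_{h_0}^{n-2}\big)\wedge\bp\p_{h_0}\varphi$, and the vanishing of the quadratic form yields $D_0''\varphi=0$ (holomorphicity) directly rather than parallelism, but since you tie the symbol positivity and the kernel computation to $J$-positivity and simpleness exactly as in Lemmas \ref{ellipticity} and \ref{small deformation}, the argument is unaffected.
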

The dHYM-positivity\footnote{In \cite{DMS20}, Hermitian metrics satisfying the dHYM-positivity condition were called ``subsolutions''. However, we would like to adopt the former name to distinguish from several other notions of subsolutions.} is a positivity concept for Hermitian metrics introduced in \cite{DMS20}, which is crucial to assure the ellipticity of the equation \eqref{dHYM equation}. The main technical point is that for simple vector bundles, the kernel of the linearized $J$-operator at a $J$-positive solution consists of only real scalar multiples of the identity, so the standard implicit function theorem can be applied in this case.

\subsection{Relation to Pingali's work}
We should mention Pingali's work \cite{Pin20}, where he studied the vector bundle Monge--Amp\`ere equation
\begin{equation} \label{MA equation}
F_h^n=\eta \Id_E
\end{equation}
for a given volume form $\eta$. Our results (in particular, Theorem \ref{existence result on vortex bundles}) can be thought as an analogue of the results in \cite{Pin20} although there seems to be no relations between \eqref{J-equation} and \eqref{MA equation} in general situation. After \cite{Pin20}, Ghosh \cite{Gho20} gave a systematic study of vortex-type equations on compact Riemann surfaces. Zhang--Zhang \cite{ZZ22} introduced a family of Donaldson's functional on holomorphic vector bundles, whose Euler--Lagrange equations are a vector bundle version of the complex $k$-Hessian equations. They also discuss the uniqueness of solutions to these equations.

However, as a special case, if $n=2$ and the form $\omega/2c$ represents $c_1(L)$ for some ample line bundle $L$ over $X$, then the equation \eqref{J-equation} can be reduce to the vector bundle Monge--Amp\`ere equation on the twisted vector bundle $\tilde{E}:=E \otimes L^{-1}$
\[
F_{\tilde{h}}^2=\frac{\omega^2}{4c^2} \Id_E,
\]
where $\tilde{h}:=h \otimes h_L^{-1}$ and $h_L$ is a Hermitian metric on $L$ with curvature $\omega/2c$. This fact seems to give a supporting evidence why the same method works well for the $J$-equation.

\subsection{Organization of the paper}
This paper is organized as follows: in Section \ref{Preliminaries}, we sum up some properties on the space of connections and Hermitian metrics. The standard reference is \cite[Chapter 7]{Kob14}. Then in Section \ref{Positivity concepts of Hermitian metrics on vector bundles}, we give the definition of $J$-positivity, which is crucial to assure the ellipticity of \eqref{J-equation}. Also, in the case $n=2$, we define the notion of $J$-Griffiths positivity as a weaker version of the $J$-positivity. In Section \ref{Moment map interpretation}, we give the moment map interpretation for the $J$-equation (Theorem \ref{moment map interpretation for the J-equation}). Geometrically, the set of $J$-positive connections are characterized as a subset of the space of integrable connections where the symplectic form is non-degenerate. In Section \ref{J-stability for rank-2 vector bundles over compact Kahler surfaces}, we show that the existence of a $J$-Griffiths positive solution to \eqref{J-equation} implies the $J$-(semi)stability for rank-$2$ vector bundles over compact K\"ahler surfaces (Theorem \ref{J-stability for rank-2 bundles over surfaces}). In Section \ref{Examples}, we give three examples, namely, projective spaces (Theorem \ref{J equation on projective spaces}), sufficiently smooth vector bundles (Theorem \ref{existence of a solution on twisted bundles}) and vortex bundles (Theorem \ref{existence result on vortex bundles}) as mentioned above. Finally, in Section \ref{The deformed Hermitian--Yang--Mills equation}, we construct dHYM-positive solutions to \eqref{dHYM equation} on simple vector bundles in the small volume regime assuming the existence of a $J$-positive solution to \eqref{J-equation} (Theorem \ref{existence result for the dHYM equation}).

\begin{ackn}
The author was supported by Grant-in-Aid for Early-Career Scientists (20K14308) from JSPS. The author expresses his gratitude to Lars Martin Sektnan for giving me insightful comments related to the work \cite{DMS20}. The author thank Jacopo Stoppa and Zakarias Sj\"ostr\"om Dyrefelt for the invitation to the K\"ahler geometry seminar at SISSA/ICTP and several interesting discussions. Also the author is grateful to the referee for many insightful comments which have helped to improve the article.
\end{ackn}

\section{Preliminaries} \label{Preliminaries}
\subsection{The space of connections}
Let $E$ be the complex vector bundle and $\cA''(E)$ the set of all $\C$-linear maps $D'' \colon \Omega^0(E) \to \Omega^{0,1}(E)$ satisfying
\[
D''(fs)=(\bp f)s+f \cdot D''s, \quad s \in \Omega^0(E), \; f \in C^\infty(X;\C).
\]
For any fixed element $D''_0 \in \cA''(E)$, $\cA''(E)$ is an affine space modelled on $\Omega^{0,1}(\End(E))$
\[
\cA''(E)=\{D''_0+a''| a'' \in \Omega^{0,1}(\End(E))\}.
\]
We also consider another expression for $\cA''(E)$. For any fixed $h \in \Herm(E)$, let $\cA(E,h)$ be the set of $h$-compatible connections $D$ on $E$, and decompose $D$ as
\[
D=D'+D'',
\]
where $D' \colon \Omega^0(E) \to \Omega^{1,0}(E)$ and $D'' \colon \Omega^0(E) \to \Omega^{0,1}(E)$. If we fix an element $D_0 \in \cA(E,h)$, then we have
\[
\cA(E,h)=\{D_0+a| a \in \Omega^1(\End(E,h))\},
\]
where $\End(E,h)$ is the bundle of skew-Hermitian endomorphisms of $(E,h)$. So $\cA(E,h)$ is an infinite-dimensional affine space modelled on $\Omega^1(\End(E,h))$. Then the natural map
\begin{equation} \label{identify connection}
\cA(E,h) \to \cA''(E), \quad D \mapsto D''
\end{equation}
is bijective. If we fix a reference connection $D_0 \in \cA(E,h)$, then we can express the above correspondence as $a \mapsto a''$, where $a=a'+a'' \in \Omega^1(\End(E,h))$ ($a' \in \Omega^{1,0}(\End(E))$, $a'' \in \Omega^{0,1}(\End(E))$). The inverse is given by $a'' \mapsto -(a'')^\ast+a''$, where $\ast$ denotes the adjoint with respect to $h$.

Next, we impose the integrability condition
\[
\cA''_{\inte}(E):=\{D'' \in \cA''(E)|D'' \circ D''=0\},
\]
\[
\cA_{\inte}(E,h)=\{D \in \cA(E,h)|D'' \in \cA''_{\inte}(E)\}.
\]
The space $\cA''_{\inte}(E)$ may be considered as the set of holomorphic bundle structures on $E$ (which comes from the fact that any $D'' \in \cA''(E)$ defines the unique holomorphic bundle structure on $E$ whose $\bp$-operator coincides with $D''$ and vice versa). Moreover, the restriction of the map \eqref{identify connection} still gives the bijection $\cA_{\inte}(E,h) \to \cA_{\inte}''(E)$. The tangent spaces are given by
\[
T_{D''} \cA''_{\inte}(E)=\{a'' \in \Omega^{0,1}(\End(E))|D'' a''=0 \},
\]
\[
T_D \cA_{\inte}(E,h)=\{a \in \Omega^1(\End(E,h))|D'' a''=0 \}.
\]
On occasion we change the conventions for convenience.

Next we consider the action of the gauge group $\cG:=\Omega^0(\Uni (E,h))$ and its complexification $\cG^\C:=\Omega^0(\GL(E,\C))$, where $\Uni (E,h)$ is the bundle of unitary endomorphisms of $E$ with respect to $h$. For any $D \in \cA(E,h)$ and $g \in \cG^\C$, the group $\cG^\C$ acts on $\cA(E,h)$ by
\begin{equation} \label{action of the gauge group}
\begin{aligned}
D &\mapsto D^g:=g^\ast \circ D' \circ (g^\ast)^{-1}+g^{-1} \circ D'' \circ g \\
&=D'+g^\ast \circ D' (g^\ast)^{-1}+D''+g^{-1} \circ D'' g,
\end{aligned}
\end{equation}
where $g^\ast$ denotes the adjoint of $g$ with respect to $h$. Then the change of the curvature $F_D:=\frac{\i}{2\pi} D \circ D$ is
\begin{equation} \label{change of the curvature}
\begin{aligned}
F_D &\mapsto F_{D^g}=\frac{\i}{2\pi} \big( g^\ast \circ D' \circ D' \circ (g^\ast)^{-1}+g^{-1} \circ D'' \circ D'' \circ g \\
&+g^\ast \circ D' \circ (g^\ast)^{-1} \circ g^{-1} \circ D'' \circ g+g^{-1} \circ D'' \circ g \circ g^\ast \circ D' \circ (g^\ast)^{-1} \big).
\end{aligned}
\end{equation}
We note that the action of $\cG^\C$ preserves the integrability condition, so the group $\cG^\C$ acts on $\cA_{\inte}(E,h)$ and $\cA_{\inte}''(E)$. Set $\fg:=\Lie \cG=\Omega^0(\End(E,h))$ so that $\fg^\C=\Omega^0(\End(E))$. By using \eqref{action of the gauge group}, one can check that the infinitesimal action of $\fg^\C$ on $D \in \cA(E,h)$ is given by $v \mapsto -D'v^\ast+D''v$.

Finally, we compute the variation of the curvature $F_D$ when $D \in \cA(E,h)$ varies. For $a \in \Omega^1(\End(E,h))$, we set $D_t:=D+ta$. For any $s \in \Omega^0(E)$, we compute
\[
\begin{aligned}
F_{D_t}(s)&=\frac{\i}{2 \pi} D_t(D(s)+ta(s)) \\
&=\frac{\i}{2 \pi} D(D(s))+\frac{\i}{2 \pi} ta(D(s))+\frac{\i}{2 \pi} tD(a(s))+\frac{\i}{2 \pi} t^2 (a \wedge a)(s) \\
&=F_D(s)+\frac{\i}{2 \pi} t(Da)(s)+\frac{\i}{2 \pi} t^2 (a \wedge a)(s).
\end{aligned}
\]
Thus we have $F_{D_t}=F_D+\frac{\i}{2\pi}t Da+\frac{\i}{2\pi} t^2 a \wedge a$ and
\[
\d_a F_D=\frac{\i}{2\pi} Da.
\]
In particular, for any $v \in \fg^\C$ and $D \in \cA_{\inte}(E,h)$, we have
\[
\ddt F_{D^{\exp(vt)}}|_{t=0}=\frac{\i}{2\pi} (D'D''v-D''D'v^\ast).
\]

\subsection{Holomorphic subbundles}
Let us consider a holomorphic vector bundle $E$ over a compact complex manifold $X$ and a Hermitian metric $h$ on $E$ with induced connection $D$. For a holomorphic subbundle $S \subset E$, we have a orthogonal decomposition with respect to $h$ (as $C^\infty$ complex vector bundles)
\[
E=S \oplus S^\perp.
\]
and an identification $Q \simeq S^\perp$ as $C^\infty$ complex vector bundles, where $Q:=E/S$ denotes the quotient bundle. Via this identification, the restriction of $h$ to each component induces a connection which we will denote $D_S$ (resp. $D_Q$) with curvature $F_S$ (resp. $F_Q$). We define the second fundamental form $A \in \Omega^{1,0}(\Hom(S,Q))$ as the difference of the two connections
\[
Ds=D_Ss+As, \quad s \in \Omega^0(S).
\]
The following is well-known as the {\it Gauss--Codazzi equation} which relates the curvature $F_h$ with $F_S$, $F_Q$ and $A$ as follows
\[
F_h=
\begin{pmatrix}
F_S-\frac{\i}{2 \pi} A^\ast A & -\frac{\i}{2 \pi} D' A^\ast \\
\frac{\i}{2 \pi} D'' A & F_Q-\frac{\i}{2 \pi} A A^\ast
\end{pmatrix},
\]
where the derivatives $D' A^\ast$, $D'' A$ are taken with respect to the induced metric by $D_S$ and $D_Q$. In particular, taking the trace of both sides, we have $c_1(E,h)=c_1(S,h|_S)+c_1(Q,h|_Q)$.

\section{Positivity concepts of Hermitian metrics on vector bundles} \label{Positivity concepts of Hermitian metrics on vector bundles}
Let $E$ be a holomorphic vector bundle of rank $r$ over an $n$-dimensional compact K\"ahler manifold $(X,\omega)$ satisfying \eqref{positivity for Chern characters}. Recall that the $J$-equation on $E$ is given by
\[
cF_h^n-\omega \Id_E \wedge F_h^{n-1}=0.
\]
We note that the equation is not elliptic in general. So in order to guarantee the ellipticity, we introduce the following:
\begin{dfn}
We say that a Hermitian metric $h \in \Herm(E)$ is {\it $J$-positive} with respect to $\omega$ if for all $a'' \in \Omega^{0,1}(\End(E))$, the $(n,n)$-form
\[
-\i \Tr \bigg( c \sum_{k=0}^{n-1} F_h^k a'' F_h^{n-1-k}(a'')^\ast-\omega \Id_E \sum_{k=0}^{n-2} F_h^k a'' F_h^{n-2-k} (a'')^\ast \bigg)
\]
is positive at all points on $X$ where $a'' \neq 0$. Also we say that for a fixed Hermitian metric $h \in \Herm(E)$, a connection $D \in \cA_{\inte}(E,h)$ is $J$-positive if $h$ is $J$-positive with respect to the holomorphic structure induced by $D$.
\end{dfn}
\begin{rk} \label{J-positivity for line bundles}
When $r=1$, the $J$-positivity condition just says that the $(n-1,n-1)$-form
\[
cn F_h^{n-1}-(n-1) \omega F_h^{n-2}
\]
is positive in the sense of \cite[Chapter III, Section 1.A]{Dem12}. This condition is automatically satisfied if $h$ is a solution to \eqref{J-equation} (for instance, see \cite{SW08}). Thus the $J$-positivity condition can be seen as a natural generalization of this positivity condition for higher rank vector bundles.
\end{rk}

For any $J$-positive metric $h \in \Herm(E)$, the pairing
\begin{equation} \label{Hermitian inner product}
\langle a'', b'' \rangle:=-\i \int_X \Tr \bigg( c \sum_{k=0}^{n-1} F_h^k a'' F_h^{n-1-k} (b'')^\ast-\omega \Id_E \sum_{k=0}^{n-2} F_h^k a'' F_h^{n-2-k} (b'')^\ast \bigg)
\end{equation}
defines the Hermitian inner product. Indeed, this is clearly $\C$-linear in the first slot, anti-$\C$-linear in the second slot and positive definite by the $J$-positivity assumption. Moreover, if we regard $F_h$ as a matrix-valued $(1,1)$-form by using an orthonormal frame of $E$ with respect to $h$, then we have $(F_h)^\ast=F_h$ and $\Tr$ in \eqref{Hermitian inner product} can be interpreted as the trace of matrices. From these observations, we can easily check that $\langle a'', b'' \rangle=\overline{\langle b'', a'' \rangle}$.

\begin{lem} \label{ellipticity}
Suppose that $h \in \Herm(E)$ is $J$-positive. Then \eqref{J-equation} is elliptic.
\end{lem}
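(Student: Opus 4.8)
To prove Lemma \ref{ellipticity}, I would compute the linearization of the $J$-operator
\[
P(h) := c F_h^n - \omega \Id_E \wedge F_h^{n-1}
\]
at the given $J$-positive metric $h$, and show that its principal symbol is an isomorphism. The natural way to organize this is to work with connections rather than metrics: fix $h$ and vary $D \in \cA_{\inte}(E,h)$ in the direction $a \in \Omega^1(\End(E,h))$ with $D''a''=0$, using the formula $\delta_a F_D = \frac{\i}{2\pi} Da$ already recorded in the Preliminaries. Since the infinitesimal gauge action at an integrable connection is $v \mapsto -D'v^\ast + D''v$, deformations of the holomorphic structure (modulo gauge) are parametrized by $a'' \in \Omega^{0,1}(\End(E))$ with $D''a'' = 0$, and the associated variation of $F_h$ is the $(1,1)$-form part of $\frac{\i}{2\pi}D(-(a'')^\ast + a'')$, i.e. $\frac{\i}{2\pi}(D''a'' - D'(a'')^\ast)$ up to the usual conventions. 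The first step is therefore to write down $\delta P$ explicitly as
\[
\delta_a P = c \sum_{k=0}^{n-1} F_h^k \wedge (\delta_a F_h) \wedge F_h^{n-1-k} - \omega \Id_E \wedge \sum_{k=0}^{n-2} F_h^k \wedge (\delta_a F_h) \wedge F_h^{n-2-k},
\]
substitute $\delta_a F_h = \frac{\i}{2\pi}(D''a'' - D'(a'')^\ast)$, and identify the top-order (symbol) terms: these are the ones where the derivative falls on $a''$ and produces a $\bar\partial$ or $\partial$ in the relevant degree.

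\textbf{Key steps.} Second, I would pass to the principal symbol: replace $D''$ by (contraction with) a covector $\xi^{0,1}$ and $D'$ by $\xi^{1,0}$, so the symbol sends $a''$ to
\[
\sigma_\xi(a'') = \frac{\i}{2\pi}\Bigl( c \sum_{k=0}^{n-1} F_h^k (\xi^{0,1}\!\wedge a'' - \xi^{1,0}\!\wedge (a'')^\ast) F_h^{n-1-k} - \omega \Id_E \sum_{k=0}^{n-2} F_h^k(\cdots) F_h^{n-2-k}\Bigr),
\]
viewed as a map between the appropriate form bundles, and the claim is that $\sigma_\xi$ is injective (hence bijective, by the usual index/self-adjointness count for this type of complex) for every $\xi \neq 0$. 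Third — and this is the decisive point — I would pair $\sigma_\xi(a'')$ against $a''$ using exactly the sesquilinear form $\langle \cdot,\cdot\rangle$ from \eqref{Hermitian inner product}, or rather its pointwise integrand. The design of the $J$-positivity condition is precisely so that, after integrating against a bump function localizing near the point and covector in question (or more cleanly, by a direct pointwise algebraic manipulation), the quantity $\langle \sigma_\xi(a''), a''\rangle$ equals, up to a positive constant depending on $|\xi|$, the pointwise expression
\[
-\i \Tr\Bigl( c \sum_{k=0}^{n-1} F_h^k \tilde a'' F_h^{n-1-k}(\tilde a'')^\ast - \omega \Id_E \sum_{k=0}^{n-2} F_h^k \tilde a'' F_h^{n-2-k}(\tilde a'')^\ast\Bigr)
\]
with $\tilde a''$ built from $a''$ and the $(1,0)$-part of $\xi$; by the $J$-positivity hypothesis this is strictly positive whenever $\tilde a'' \neq 0$, and one checks $\tilde a'' = 0 \iff a'' = 0$ when $\xi \neq 0$. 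Hence $\sigma_\xi$ has trivial kernel, so $P$ is elliptic at $h$.

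\textbf{Main obstacle.} The routine parts are the variational formula and the substitution; the genuinely delicate step is the algebraic matching in the third step — showing that contracting the symbol $\sigma_\xi$ against $a''$ reproduces, up to positivity-preserving factors, the defining expression of $J$-positivity, keeping careful track of: which wedge slots the covector $\xi$ occupies, the interplay between the $\xi^{0,1}\!\wedge a''$ and $\xi^{1,0}\!\wedge(a'')^\ast$ contributions (these must combine rather than cancel), and the sign/factor-of-$\i$ bookkeeping inherent in writing everything in terms of an $h$-orthonormal frame where $F_h^\ast = F_h$. I expect that choosing $\xi$ to have, say, $\xi^{1,0}$ nonzero and then absorbing $\xi^{1,0}$ into $a''$ (replacing the formal variable $a''$ by $\xi^{1,0}\!\wedge a''$ composed with a wedge by the conjugate direction) is what makes the two expressions literally coincide — this is the same trick by which $J$-positivity was phrased for all of $\Omega^{0,1}(\End(E))$ rather than just closed forms. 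Once this identification is in hand the conclusion is immediate, and I would remark that the same computation shows the linearized operator is (formally) self-adjoint with respect to \eqref{Hermitian inner product}, which is what one wants for the later implicit-function-theorem arguments.
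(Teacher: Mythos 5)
Your plan contains the paper's key algebraic observation (absorbing the covector $\xi$ into a rank-one $a''$ so that the $J$-positivity hypothesis applies verbatim), but the deformation space in which you linearize is the wrong one, and this is a genuine gap rather than a bookkeeping issue. The equation \eqref{J-equation} is an equation for the Hermitian metric $h$, equivalently for $\exp(v)\cdot D$ with $v$ a Hermitian endomorphism of $(E,h)$; ellipticity therefore means that the \emph{second-order} linearized operator acting on Hermitian endomorphisms $v$ has invertible principal symbol. You instead parametrize deformations by $a''\in\Omega^{0,1}(\End(E))$ with $D''a''=0$, i.e.\ by deformations of the holomorphic structure transverse to the complexified gauge orbit --- exactly the directions that are irrelevant for the equation on metrics; the directions you need are the gauge directions $a''=D''v$. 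With $a''$ as the unknown the linearization $\d_a F=\tfrac{\i}{2\pi}Da$ is first order, the domain is cut out by the non-pointwise condition $D''a''=0$ (so it has no principal symbol in the usual sense) and has different rank from the target, so there is no determined system whose symbol could be inverted. The pairing you propose also cannot work as stated: $\langle\s_\xi(a''),a''\rangle$ built from a first-order symbol is linear, hence odd, in $\xi$, so it cannot be positive for all $\xi\neq0$, whereas the $J$-positivity density evaluated on a $\xi$-modified $\tilde a''$ is quadratic in $\xi$; the two expressions cannot coincide ``up to a positive constant''.

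The repair is precisely your own trick applied after the correct linearization, and it is the paper's proof. For Hermitian $v$ the variation of the curvature along $\exp(tv)\cdot D$ is $\tfrac{\i}{2\pi}(D'D''v-D''D'v)$, a second-order operator in $v$; its leading term is $2\sum_{j,k}\p_j\p_{\bar k}v^\a\,dz^j\wedge dz^{\bar k}$, so replacing $\p/\p z^i$ by $\xi_i$ gives the symbol
\begin{equation*}
\s_\xi(v)=\frac{\i}{\pi}\bigg(c\sum_{k=0}^{n-1}F_D^{\,k}\,v\,F_D^{\,n-1-k}-\omega\Id_E\sum_{k=0}^{n-2}F_D^{\,k}\,v\,F_D^{\,n-2-k}\bigg)(\xi\Id_E)(\bar\xi\Id_E),
\end{equation*}
and taking $a''=\bar\xi\otimes v$ in the $J$-positivity condition yields $\Tr(\s_\xi(v)v)>0$ whenever $v\neq0$ and $\xi\neq0$, which is the required invertibility of the symbol on Hermitian endomorphisms. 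So your ``decisive point'' is correct, but it must be fed the second-order symbol in $v$, not a first-order symbol in $a''$ restricted to $D''$-closed forms.
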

\begin{proof}
Let $D$ be the associated connection of $h$. Taking the point of view of changing the holomorphic structure on $E$ through the action of the gauge group, we define an operator $\cP \colon \i \fg \to \i \fg$ by
\[
\cP(v):=c F_{D^{\exp(v)}}^n-\omega \Id_E F_{D^{\exp(v)}}^{n-1},
\]
where $\fg=\Omega^0(\End(E,h))$\footnote{By abuse of notation, we often identify $(n,n)$-forms with functions by using a fixed volume form.}. Then the linearization of $\cP$ at $0$ in the direction $v \in \i \fg$ is given by
\[
\d|_{0,v} \cP=\frac{\i}{2 \pi} \bigg( c \sum_{k=0}^{n-1} F_D^k (D' D'' v-D'' D' v) F_D^{n-1-k}-\omega \Id_E \sum_{k=0}^{n-2} F_D^k (D' D'' v-D'' D' v) F_D^{n-2-k} \bigg).
\]
We take local coordinates $(z^i)$ as well as a real local frame $(e_\a)$ of $E$, and write $v$ locally as $v=\sum_\a v^\a e_\a$. Then we compute
\[
D' D'' v-D'' D' v=2 \sum_\a \bigg( \sum_{j,k} \frac{\p}{\p z^j} \frac{\p}{\p z^{\bar{k}}}(v^\a) dz^j \wedge dz^{\bar{k}} \bigg) e_\a+\text{(terms of lower order derivatives of $v^\a$)}.
\]
Meanwhile, let $(x^i,y^i)$ be real coordinates with $\frac{\p}{\p z^i}=\frac{1}{2} \big( \frac{\p}{\p x^i}-\i \frac{\p}{\p y^i} \big)$. If we perform the replacement $\frac{\p}{\p x^i} \mapsto \zeta_i$, $\frac{\p}{\p y^i} \mapsto \eta_i$, then $\frac{\p}{\p z^i} \mapsto \frac{1}{2} (\zeta_i-\i \eta_i)$. This shows that if we set $\xi:=\sum_i \xi_i dz^{i} \in \Omega^{1,0}$ with $\xi_i:=\frac{1}{2} (\zeta_i-\i \eta_i)$, then we may compute the principal symbol $\s$ of $\d|_{\Id_E} \cP$ just by replacing $\frac{\p}{\p z^i} \mapsto \xi_i$. Thus we have
\[
\s_\xi(v)=\frac{\i}{\pi} \bigg(c \sum_{k=0}^{n-1} F_D^k v F_D^{n-1-k}-\omega \Id_E \sum_{k=0}^{n-2} F_D^k v F_D^{n-2-k}\bigg) (\xi \Id_E) (\bar{\xi} \Id_E).
\]
On the other hand, the $J$-positivity condition for $h$ says that for all $a'' \in \Omega^{0,1}(\End(E))$ we have
\[
-\i \Tr \bigg( c \sum_{k=0}^{n-1} F_D^k a'' F_D^{n-1-k} (a'')^\ast-\omega \Id_E \sum_{k=0}^{n-2} F_D^k a'' F_D^{n-2-k} (a'')^\ast \bigg)>0
\]
at all points on $X$ where $a'' \neq 0$. Putting $a''=\bar{\xi} \otimes v$ in the above equation, we observe that
\[
\Tr(\s_\xi(v) v)>0
\]
at all points where $v \neq 0$ and $\xi \neq 0$. This shows that the equation \eqref{J-equation} is elliptic at $D$ as desired.
\end{proof}

\begin{lem} \label{small deformation}
Let $E$ be a simple holomorphic vector bundle of rank $r$ over an $n$-dimensional compact K\"ahler manifold $(X,\omega_0)$ satisfying \eqref{positivity for Chern characters} with respect to $\omega_0$, and $h_0 \in \Herm(E)$ a $J$-positive metric satisfying
\[
c_0 F_{h_0}^n-\omega_0 \Id_E F_{h_0}^{n-1}=0, \quad c_0:=\frac{[\omega_0] \cdot \ch_{n-1}(E)}{n \ch_n (E)}.
\]
Then for any K\"ahler form $\omega$ sufficiently close to $\omega_0$, there exists a smooth solution $h \in \Herm(E)$ to
\begin{equation} \label{small deformation of the J-equation}
c F_h^n-\omega \Id_E F_h^{n-1}=0, \quad c:=\frac{[\omega] \cdot \ch_{n-1}(E)}{n \ch_n (E)}.
\end{equation}
\end{lem}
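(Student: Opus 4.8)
The plan is to solve \eqref{small deformation of the J-equation} by the implicit function theorem, treating $\omega$ as a parameter near $\omega_0$ and using the ellipticity supplied by Lemma \ref{ellipticity} together with the simplicity of $E$ to invert the linearized $J$-operator. I would set things up as in the proof of Lemma \ref{ellipticity}: fix the reference metric $h_0$ with associated connection $D_0\in\cA_{\inte}(E,h_0)$, and for an $h_0$-Hermitian endomorphism $v\in\i\fg$ and a K\"ahler form $\omega$ near $\omega_0$ put
\[
\cP(\omega,v):=c(\omega)\,F_{\exp(v)\cdot D_0}^n-\omega\,\Id_E\,F_{\exp(v)\cdot D_0}^{n-1},\qquad c(\omega):=\frac{[\omega]\cdot\ch_{n-1}(E)}{n\,\ch_n(E)},
\]
an $\i\fg$-valued $(n,n)$-form with $\cP(\omega_0,0)=0$; a zero of $\cP(\omega,\cdot)$ gives, after transporting by the corresponding complex gauge transformation, a solution of \eqref{small deformation of the J-equation} on $(E,\bp_E)$. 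A preliminary observation: by Chern--Weil, $\int_X\Tr F^n$ and $\int_X\omega\wedge\Tr F^{n-1}$ depend only on cohomology classes, so the normalization of $c(\omega)$ forces $\int_X\Tr\cP(\omega,v)=0$ identically; hence $\cP$ takes values in the closed codimension-one subspace $W$ of $\i\fg$-valued $(n,n)$-forms with vanishing integrated trace (here $\ch_n(E)>0$ from \eqref{positivity for Chern characters} makes $c(\omega)$ well defined).

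Next I would analyze $L:=\delta_v\cP|_{(\omega_0,0)}$, which is the linearized operator from the proof of Lemma \ref{ellipticity} at $D_0$, hence elliptic since $h_0$ is $J$-positive. The key point is that $L$ is formally self-adjoint with respect to $(v,w)\mapsto\int_X\Tr(vw)$, and that integration by parts against the Bianchi identity $D_0F_{h_0}=0$ yields
\[
\int_X\Tr\big(L(v)\,v\big)=\lambda\,\langle D_0''v,\,D_0''v\rangle
\]
for a fixed nonzero constant $\lambda$, where $\langle\cdot,\cdot\rangle$ is the positive-definite Hermitian inner product \eqref{Hermitian inner product} attached to the $J$-positive metric $h_0$. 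Consequently $L(v)=0$ forces $D_0''v=0$, i.e.\ $v$ is a holomorphic endomorphism of $E$; as $E$ is simple and $v$ is $h_0$-Hermitian, $v\in\R\,\Id_E$. Thus $\Ker L=\R\,\Id_E$, and by self-adjointness and elliptic theory $L$ is Fredholm of index zero with one-dimensional cokernel; since $\operatorname{Im}L\subseteq W$ and $W$ has codimension one, $\operatorname{Im}L=W$.

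To remove the one-dimensional kernel I restrict to $V_0:=\{v\in\i\fg:\int_X\Tr(v)\,\omega_0^n=0\}$, which loses no solutions because rescaling $h_0$ by a positive constant does not change the $J$-equation, so $\i\fg=V_0\oplus\R\,\Id_E$. Then $L|_{V_0}\colon V_0\to W$ is injective ($\Ker L\cap V_0=0$) with image $L(V_0)=L(\i\fg)=W$, hence an isomorphism of H\"older completions $C^{k+2,\a}\to C^{k,\a}$. Since $\cP$, as a map sending a K\"ahler form near $\omega_0$ and a $v\in V_0^{k+2,\a}$ to $W^{k,\a}$, is smooth (it is built from the real-analytic map $v\mapsto F_{\exp(v)\cdot D_0}$, which involves $v$ and its first two derivatives, together with algebraic operations and the linear functional $\omega\mapsto c(\omega)$), vanishes at $(\omega_0,0)$, and has invertible partial derivative $L|_{V_0}$ there, the implicit function theorem produces, for every $\omega$ sufficiently close to $\omega_0$, a small $v(\omega)\in V_0^{k+2,\a}$ with $\cP(\omega,v(\omega))=0$ and $v(\omega)\to0$ as $\omega\to\omega_0$. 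The associated metric $h$ is $J$-positive with respect to $\omega$ once $\omega$ is close enough (openness of $J$-positivity), so \eqref{small deformation of the J-equation} is elliptic at $h$; elliptic bootstrapping then makes $h$ smooth, and taking the trace and integrating recovers the asserted value of $c$.

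I expect the main obstacle to be the spectral analysis of $L$, namely establishing $\Ker L=\operatorname{coker}L=\R\,\Id_E$. The implication ``$D_0''v=0\Rightarrow v\in\R\,\Id_E$'' is where simplicity of $E$ enters, but the substantive step is the integration-by-parts identity expressing $\int_X\Tr(L(v)v)$ through the inner product \eqref{Hermitian inner product}: this is exactly what converts the $J$-positivity of $h_0$ into the coercivity that pins down the kernel, while its self-adjoint counterpart controls the cokernel. The remaining ingredients --- the automatic cohomological normalization matching the codimension of $W$ to the cokernel, smoothness of $\cP$, the choice of function spaces, and elliptic regularity --- are routine.
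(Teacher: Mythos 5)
Your proposal is correct and follows essentially the same route as the paper: gauge-fixing at the $J$-positive solution, viewing the $J$-operator as a map of trace-normalized Hermitian endomorphisms parametrized by $\omega$, using ellipticity (from $J$-positivity), formal self-adjointness via Bianchi and $d\omega_0=0$, the integration-by-parts identity reducing the kernel to $D_0''v=0$ and hence (by simplicity and the normalization) to zero, and then the implicit function theorem on Banach (H\"older) spaces. Your extra bookkeeping with the codimension-one target $W$ and the explicit elliptic bootstrapping/openness-of-positivity remarks are just slightly more detailed versions of steps the paper leaves implicit.
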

\begin{proof}
As an equivalent point of view, we will consider the variation of holomorphic structures on $(E,h_0)$ through the action of the gauge group as considered in Lemma \ref{ellipticity}. For any $\ell \in \Z_{\geq 0}$ and $\b \in (0,1)$, let $\cV^{\ell,\b}$ be a Banach manifold consisting of all $C^{\ell,\b}$ Hermitian endomorphisms $v$ of $(E,h_0)$ satisfying $\int_X \Tr (v) \omega_0^n=0$, and let $\cK^{\ell,\b}$ be a Banach manifold consisting of all $C^{\ell,\b}$ K\"ahler forms $\omega$ satisfying $[\omega] \cdot \ch_{n-1}(E)>0$. Define an operator $\cP \colon \cV^{\ell+2,\b} \times \cK^{\ell,\b} \to \cV^{\ell,\b}$ by
\[
\cP(v,\omega):=cF_{D_0^{\exp(v)}}^n-\omega \Id_E F_{D_0^{\exp(v)}}^{n-1}, \quad c:=\frac{[\omega] \cdot \ch_{n-1}(E)}{n \ch_n(E)},
\]
where $D_0$ denotes the associated connection of $h_0$. Then we have $\cP(0,\omega_0)=0$ by the assumption, and the derivative of $\cP$ at $(0,\omega_0)$ in the direction $v$ is
\begin{equation} \label{linearization of the operator}
\begin{aligned}
\d|_{(0,\omega_0),v} \cP &=\frac{\i}{2 \pi} \bigg( c_0 \sum_{k=0}^{n-1} F_{D_0}^k (D_0' D_0'' v-D_0'' D_0' v) F_{D_0}^{n-1-k} \\
&-\omega_0 \Id_E \sum_{k=0}^{n-2} F_{D_0}^k (D_0' D_0'' v-D_0'' D_0' v) F_{D_0}^{n-2-k} \bigg),
\end{aligned}
\end{equation}
which is elliptic since $D_0$ is $J$-positive, and hence is Fredholm. Moreover, we know that $\d|_{(0,\omega_0)} \cP$ is self-adjoint by applying the Bianchi identity and closedness of $\omega_0$. So by the Fredholm alternative, it suffices to show that the kernel of $\d|_{(0,\omega_0)} \cP$ is trivial. Indeed, if $\d|_{(0,\omega_0)} \cP(v)=0$, then multiplying \eqref{linearization of the operator} by $v$, taking the trace and integrating by parts, we have
\[
\langle D''_0v, D''_0v \rangle=0,
\]
where $\langle \cdot, \cdot \rangle$ denotes the Hermitian inner product defined by \eqref{Hermitian inner product}. This shows that $D''_0v=0$ and hence $v=\lambda \Id_E$ for some $\lambda \in \R$ since $E$ is simple. However, this yields that $v=0$ from the normalization $\int_X \Tr(v) \omega_0^n=0$. Consequently, we find that $\d|_{(0,\omega_0)} \cP$ is an isomorphism, and apply the implicit function theorem on Banach manifolds to get the desired statement.
\end{proof}

Also we introduce a weaker concept of $J$-positivity when $n=2$ as follows.

\begin{dfn}
Let $E$ be a holomorphic vector bundle of rank $r$ over a compact K\"ahler surface $(X,\omega)$ satisfying \eqref{positivity for Chern characters}. We say that $h \in \Herm(E)$ is {\it $J$-Griffiths positive} (with respect to $\omega$) if it satisfies
\begin{equation} \label{J-Griffiths positivity}
-\i \big( 2c \cdot h(F_h(v)(\xi,\bar{\xi}),v)-|v|_h^2 \omega(\xi,\bar{\xi}) \big)>0
\end{equation}
for all $x \in X$, non-zero $v \in E_x$ and non-zero $\xi \in T_x' X$.
\end{dfn}

In particular, if $h$ is $J$-Griffiths positive, then taking the trace of \eqref{J-Griffiths positivity}, we know that the $(1,1)$-form
\[
2c \cdot c_1(E,h)-r \omega
\]
is K\"ahler.
\begin{prop} \label{J-positivity and J-Griffiths positivity}
Let $E$ be a holomorphic vector bundle of rank $r$ over a compact K\"ahler surface $(X,\omega)$. If $h \in \Herm(E)$ is $J$-positive with respect to $\omega$, then it is $J$-Griffiths positive with respect to $\omega$.
\end{prop}
\begin{proof}
For any $x \in X$, non-zero $v \in E_x$ and non-zero $\xi \in T_x' X$, we choose local coordinates $(z^1,z^2)$ so that $\xi=\frac{\p}{\p z^1}$. Moreover, we choose a local orthonormal frame of $E$ with respect to $h$ on which the curvature $F_h$ can be expressed as
\[
F=A \i dz^1 dz^{\bar{1}}+B \i dz^1 dz^{\bar{2}}+B^\ast \i dz^2 dz^{\bar{1}}+C \i dz^2 dz^{\bar{2}},
\]
where $A, B, C$ are $r \times r$ matrix-valued functions satisfying $A=A^\ast$, $C=C^\ast$. Similarly, we write the $\i dz^1 dz^{\bar{1}}$-component of $\omega$ as $D>0$. Then what we have to show is that
\[
2cA-D \Id_r>0.
\]
The $J$-positivity assumption means that
\[
-\i \Tr \bigg(c \big( a'' F_h (a'')^\ast+F_h a'' (a'')^\ast \big)-\omega \Id_E a'' (a'')^\ast \bigg)>0
\]
holds for all non-zero $a'' \in \Omega^{0,1}(\End(E))$. In particular, if we take $a''=dz^{\bar{2}} \otimes \eta$ ($\eta \in \Omega^0(\End(E))$), then
\[
\frac{-\i c \Tr \big(a'' F_h (a'')^\ast+F_h a'' (a'')^\ast \big)}{(\i)^2 dz^1 dz^{\bar{1}} dz^2 dz^{\bar{2}}}=c \big( \Tr(\eta A \eta^\ast)+\Tr(\eta^\ast A \eta) \big),
\]
\[
\frac{\i \Tr \big( \omega \Id_E a'' (a'')^\ast \big)}{(\i)^2 dz^1 dz^{\bar{1}} dz^2 dz^{\bar{2}}}=-D \Tr(\eta \eta^\ast).
\]

If we set $\eta:=v v^\ast$, then we have $\eta=\eta^\ast$ and
\[
\begin{aligned}
0&<c \big( \Tr(\eta A \eta^\ast)+\Tr(\eta^\ast A \eta) \big)-D \Tr(\eta \eta^\ast) \\
&=2c \Tr(\eta A \eta^\ast)-D \Tr(\eta \eta^\ast) \\
&=\Tr(\eta (2cA-D \Id_r) \eta^\ast) \\
&=|v|_h^2 v^\ast (2cA-D \Id_r) v.
\end{aligned}
\]
This shows that $2cA-D \Id_r>0$ as desired.
\end{proof}

\begin{rk}
Pingali \cite{Pin20} introduced another concept of positivity called the $\MA$-positivity. We say that $h \in \Herm(E)$ is $\MA$-positive if
if for all $a'' \in \Omega^{0,1}(\End(E))$, the $(n,n)$-form
\[
-\i \Tr \bigg( \sum_{k=0}^{n-1} F_h^k a'' F_h^{n-1-k}(a'')^\ast \bigg)
\]
is positive at all points on $X$ where $a'' \neq 0$. He showed that the $\MA$-positivity implies the Griffiths positivity \cite[Lemma 2.4]{Pin20}. So Proposition \ref{J-positivity and J-Griffiths positivity} can be viewed as the $J$-analogue of this result. Also, from the definition, it is clear that the $J$-positivity (resp. $J$-Griffiths positivity) implies the $\MA$-positivity (resp. Griffiths positivity).
\end{rk}

\section{Moment map interpretation} \label{Moment map interpretation}
Let $(E,h)$ be a Hermitian vector bundle or rank $r$ over a compact K\"ahler manifold $(X,\omega)$. Let us consider the open subset $\cU \subset \cA_{\inte}''(E)$ consisting of all $J$-positive connections $D'' \in \cA_{\inte}''(E) (\simeq \cA_{\inte}(E,h))$. Then for any $D'' \in \cU$, the $J$-positivity condition assures that a real $(1,1)$-form
\[
\Omega_{D''}(a'',b''):=-\frac{1}{\pi} \Im \langle a'',b'' \rangle, \quad a'', b'' \in T_{D''} \cA_{\inte}''(E)
\]
is positive. If we set $a:=-(a'')^\ast+a''$, $b:=-(b'')^\ast+b''$, then
\[
\begin{aligned}
\Omega_{D''}(a'',b'') &=-\frac{1}{\pi} \Im \langle a'', b'' \rangle \\
&=\frac{1}{2 \pi} \int_X \Tr \bigg[ c \sum_{k=0}^{n-1} F_{D''}^k a'' F_{D''}^{n-1-k} (b'')^\ast+c \sum_{k=0}^{n-1} F_{D''}^k (a'')^\ast F_{D''}^{n-1-k} b''\\
&-\omega \Id_E \bigg( \sum_{k=0}^{n-2} F_{D''}^k a'' F_{D''}^{n-2-k} (b'')^\ast+\sum_{k=0}^{n-2} F_{D''}^k (a'')^\ast F_{D''}^{n-2-k} b'' \bigg) \bigg] \\
&=-\frac{1}{2 \pi} \int_X \Tr \bigg( c \sum_{k=0}^{n-1} F_{D''}^k a F_{D''}^{n-1-k} b-\omega \Id_E \sum_{k=0}^{n-2} F_{D''}^k a F_{D''}^{n-2-k} b \bigg).
\end{aligned}
\]
Thus the form $\Omega$ is transferred to a real $(1,1)$-form
\[
\Omega_D(a,b):=-\frac{1}{2 \pi} \int_X \Tr \bigg( c \sum_{k=0}^{n-1} F_D^k a F_D^{n-1-k} b-\omega \Id_E \sum_{k=0}^{n-2} F_D^k a F_D^{n-2-k} b \bigg), \quad a,b \in T_D \cA_{\inte}(E,h)
\]
via the identification $\cA_{\inte}(E,h) \simeq \cA_{\inte}''(E)$.

\begin{thm}
The $(1,1)$-form $\Omega$ defines a K\"ahler structure on $\cU$. Moreover, the moment map associated to the action of the gauge group $\cG$ is given by
\[
\mu_D(v):=\i \int_X \Tr \bigg( v \big(c F_D^n-\omega \Id_E F_D^{n-1} \big) \bigg), \quad v \in \fg=\Omega^0(\End(E,h)).
\]
\end{thm}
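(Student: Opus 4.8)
The statement has two parts: (i) that $\Omega$ is closed (it is already pointwise positive, hence type $(1,1)$ for a compatible almost complex structure), so that $\Omega$ is K\"ahler on $\cU$; and (ii) that the gauge group moment map is the given expression $\mu_D(v) = \i \int_X \Tr\big(v(cF_D^n - \omega\Id_E F_D^{n-1})\big)$. I would treat these in that order, as the moment map identity is the heart of the statement and the K\"ahler property is largely formal.

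\textbf{Step 1: the complex structure and closedness of $\Omega$.} First I would recall the complex structure on $\cA''_{\inte}(E)$: at a point $D''$ the tangent space consists of $a'' \in \Omega^{0,1}(\End E)$ with $D''a''=0$, and $\cA''(E)$ itself is an affine space over a complex vector space, so multiplication by $\i$ on $a''$ gives the integrable almost complex structure $\mathbf{J}$. One checks that $\Omega_{D''}(\mathbf{J}a'',\mathbf{J}b'') = \Omega_{D''}(a'',b'')$ directly from the formula, so $\Omega$ is of type $(1,1)$ and the associated bilinear form $\Omega(\cdot,\mathbf{J}\cdot)$ is the (positive, by $J$-positivity) Hermitian metric; hence $\Omega$ is a K\"ahler form provided $d\Omega=0$. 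For closedness I would express $\Omega$ as an integral of a universal polynomial in the curvature and use the standard trick: since $\cA''(E)$ is affine, a tangent vector $a''$ extends to a constant vector field, so the exterior derivative of $\Omega$ along three such fields reduces to computing directional derivatives of $\Omega_{D''}(b'',c'')$ in the direction $a''$. The variation of $F_{D''}$ in the direction $a''$ is $\tfrac{\i}{2\pi}D''a''$-type, so on the integrability locus these derivative terms organize into exact forms (using $D''a''=0$ and the Bianchi identity $D F_D=0$), and integration by parts over the closed manifold $X$ kills them; the cyclicity of the trace and the Leibniz rule make the three contributions cancel in pairs. This is the one genuinely computational point, but it is the same computation as in the classical Atiyah--Bott / Donaldson setting with the extra harmless insertion of the closed form $\omega\Id_E$.

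\textbf{Step 2: the moment map equation.} By definition I must show, for every $v\in\fg=\Omega^0(\End(E,h))$ and every $a\in T_D\cA_{\inte}(E,h)$,
\[
d\langle \mu, v\rangle_D(a) \;=\; \Omega_D\big(\hat v_D,\, a\big),
\]
where $\hat v_D$ is the infinitesimal gauge action at $D$, which by the preliminaries equals $\hat v_D = -D'v + D''v$ (for $v$ skew-Hermitian), i.e. its $(0,1)$-part as an element of $T_{D''}\cA''_{\inte}(E)$ is $D''v$. The left side is computed from $\delta_a F_D = \tfrac{\i}{2\pi}Da$: differentiating $cF_D^n - \omega\Id_E F_D^{n-1}$ and pairing with $v$ under $\i\int_X\Tr$ gives $\i\int_X\Tr\big(v \cdot \tfrac{\i}{2\pi}(cDa\wedge\sum F_D^k\cdots - \omega\Id_E Da\wedge\sum F_D^k\cdots)\big)$, which after integrating by parts (moving $D$ off $a$ and onto $v\sum F_D^k\cdots$, using Bianchi $DF_D=0$ and $d\omega=0$) becomes $-\tfrac{1}{2\pi}\int_X\Tr\big(c\sum_k F_D^k (Dv) F_D^{n-1-k} a - \omega\Id_E\sum_k F_D^k (Dv) F_D^{n-2-k} a\big)$ up to sign bookkeeping. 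The right side $\Omega_D(\hat v_D, a)$ is exactly this same expression by the displayed formula for $\Omega_D$ with the first slot $Dv = \hat v_D$. So matching the two amounts to a careful sign/degree check plus the integration by parts; equivariance $\mu_{g\cdot D} = \mathrm{Ad}_g^*\mu_D$ follows from the gauge-invariance of the curvature polynomial and of the trace.

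\textbf{Main obstacle.} The substantive point is the integration-by-parts identity in Step 2: one must track that the Bianchi identity lets $D$ pass through the symmetric curvature polynomial $\sum_{k} F_D^k(\cdot)F_D^{n-1-k}$ so that the derivative lands cleanly on $v$, and that the sign and the factors of $\i$ and $2\pi$ coming from $F_D = \tfrac{\i}{2\pi}D\circ D$ and from $\delta_aF_D=\tfrac{\i}{2\pi}Da$ conspire to reproduce $\Omega_D$ with precisely the normalization stated. I expect no conceptual difficulty beyond this; closedness of $\Omega$ (Step 1) is standard once the computation in Step 2 is in hand, since the same manipulations recur.
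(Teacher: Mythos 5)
Your proposal follows essentially the same route as the paper: positivity of $\Omega$ comes directly from the $J$-positivity defining $\cU$, closedness is proved via constant vector fields together with the Bianchi identity and Stokes, and the moment map identity is the integration-by-parts computation starting from $\d_a F_D=\frac{\i}{2\pi}Da$ and cyclicity of the trace. The only small caveat is that the closedness computation should be performed on the ambient affine space $\cA(E,h)$ (where constant extensions are tangent and no appeal to $D''a''=0$ is needed) and then restricted to $\cU$, exactly as the paper does, since the integrability locus itself is not affine.
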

\begin{proof}
First, we will show that $\Omega$ is a K\"ahler form on $\cU$. Indeed, the $(1,1)$-form $\Omega$ is positive from the definition of $\cU$. As for the closedness, it is enough to show that $\Omega$ is closed on a larger affine space $\cA(E,h)$ including $\cU$. We just consider the closedness of the first term of $\Omega$ (up to scaling)
\[
\tilde{\Omega}_D(a,b):=-2 \pi \i \int_X \Tr \bigg( \sum_{k=0}^{n-1} F_D^k a F_D^{n-1-k} b \bigg), \quad D \in \cA(E,h)
\]
since the second term is similar. Take constant vector fields $a,b,c$ on $\cA(E,h)$ so that
\[
[a,b]=[b,c]=[c,a]=0.
\]
Then we have
\[
d\tilde{\Omega}_D(a,b,c)=d(\tilde{\Omega}(b,c))_D(a)-d(\tilde{\Omega}(a,c))_D(b)+d(\tilde{\Omega}(a,b))_D(c).
\]
To simplify the notations, we define
\[
I(A,B,C):=\sum_{\substack{k,\ell,m \geq 0 \\ k+\ell+m=n-2}} F_D^k A F_D^\ell B F_D^m C, \quad J(A,B,C):=\int_X \Tr I(A,B,C),
\]
where $A, B, C$ are elements in $\Omega^1(\End(E,h))$ or $\Omega^2(\End(E,h))$. Then
\[
d(\tilde{\Omega}(b,c))_D(a)=J(Da,b,c)+J(b,Da,c)=J(Da,b,c)-J(Da,c,b).
\]
On the other hand, by applying the Bianchi identity, we observe that
\[
\begin{aligned}
0&=\int_X D \Tr \big( I(a,b,c)-I(a,c,b) \big) \\
&=\int_X \Tr \bigg( D \big( I(a,b,c)-I(a,c,b) \big) \bigg) \\
&=J(Da,b,c)-J(Da,c,b)-J(a,Db,c)+J(a,Dc,b)+J(a,b,Dc)-J(a,c,Db) \\
&=J(Da,b,c)-J(Da,c,b)+J(Db,c,a)-J(Db,a,c)+J(Dc,a,b)-J(Dc,b,a) \\
&=d(\tilde{\Omega}(b,c))_D(a)-d(\tilde{\Omega}(a,c))_D(b)+d(\tilde{\Omega}(a,b))_D(c)
\end{aligned}
\]
as desired.

Finally, we will check that $\mu$ gives the moment map for the action of $\cG$ on $(\cA_{\inte}(E,h), \Omega)$ by showing that for any $a \in \Omega^1(\End(E,h))$ and $v \in \cG$, we have
\[
\begin{aligned}
(d \mu_D(v))(a)&=\i \int_X \Tr \bigg[ v \bigg( c \sum_{k=0}^{n-1} F_D^k \bigg( \frac{\i}{2 \pi} D a \bigg) F_D^{n-1-k}-\omega \Id_E \sum_{k=0}^{n-2} F_D^k \bigg( \frac{\i}{2 \pi} D a \bigg) F_D^{n-2-k} \bigg) \bigg]\\
&=\frac{1}{2 \pi} \int_X \Tr \bigg[ Dv \bigg( c \sum_{k=0}^{n-1} F_D^k a F_D^{n-1-k}-\omega \Id_E \sum_{k=0}^{n-2} F_D^k a F_D^{n-2-k} \bigg) \bigg]\\
&=\frac{1}{2 \pi} \int_X \Tr \bigg[ c \sum_{k=0}^{n-1}F_D^{n-1-k} (Dv) F_D^k a-\omega \Id_E \sum_{k=0}^{n-2} F_D^{n-2-k} (Dv) F_D^k a \bigg]\\
&=-\Omega_D(Dv,a).
\end{aligned}
\]
\end{proof}

\section{$J$-stability for rank-$2$ vector bundles over compact K\"ahler surfaces} \label{J-stability for rank-2 vector bundles over compact Kahler surfaces}
In this section, we will show the following:
\begin{thm}[Theorem \ref{J-stability for rank-2 bundles over surfaces}]
Let $E$ be a holomorphic vector bundle of rank-$2$ over a compact K\"ahler surface $(X,\omega)$ satisfying \eqref{positivity for Chern characters}. Assume that $E$ admits a $J$-Griffiths positive Hermitian metric $h$ solving \eqref{J-equation}. Then $E$ is $J$-semistable for all subbundles. Moreover, $E$ is $J$-stable for all subbundles if it is indecomposable.
\end{thm}
\begin{proof}
For a holomorphic subbundle $S \subset E$ of $\rank S>0$, we recall the Gauss--Codazzi equation
\[
F_h=
\begin{pmatrix}
F_S-\frac{\i}{2 \pi} A^\ast A & -\frac{\i}{2 \pi} D' A^\ast \\
\frac{\i}{2 \pi} D'' A & F_Q-\frac{\i}{2 \pi} A A^\ast
\end{pmatrix},
\]
where $Q:=E/S$ is the quotient bundle and $A \in \Omega^{1,0}(\Hom(S,Q))$ denotes the second fundamental form. If $h$ satisfies \eqref{J-equation}, then we have
\begin{equation} \label{equation on S}
c \Tr_S \bigg[ \bigg(F_S-\frac{\i}{2 \pi}A^\ast A \bigg)^2 \bigg]-c \bigg( \frac{\i}{2 \pi} \bigg)^2 \Tr_S (D'A^\ast D'' A)-\omega \bigg( \Tr_S (F_S)-\frac{\i}{2 \pi} \Tr_S (A^\ast A) \bigg)=0,
\end{equation}
\begin{equation} \label{equation on Q}
c \Tr_Q \bigg[ \bigg(F_Q-\frac{\i}{2 \pi}A A^\ast \bigg)^2 \bigg]-c \bigg( \frac{\i}{2 \pi} \bigg)^2 \Tr_Q (D''A D' A^\ast)-\omega \bigg( \Tr_Q (F_Q)-\frac{\i}{2 \pi} \Tr_Q (A A^\ast) \bigg)=0.
\end{equation}
The equation \eqref{equation on S} shows that
\begin{equation} \label{expansion of the equation on S}
\begin{aligned}
& 2c \ch_2(S,h|_S)-\frac{\i}{\pi} c \Tr_S (F_S A^\ast A)-c \bigg( \frac{\i}{2 \pi} \bigg)^2 \Tr_S (D'A^\ast D'' A)-\omega \ch_1(S,h|_S) \\
&+\frac{\i}{2 \pi} \omega \Tr_S (A^\ast A)=0,
\end{aligned}
\end{equation}
where we note that $\Tr_S(A^\ast A A^\ast A)=0$ since $E$ has rank-$2$. Likewise the equation \eqref{equation on Q} yields that
\begin{equation} \label{expansion of the equation on Q}
\begin{aligned}
&2c \ch_2(Q,h|_Q)-\frac{\i}{\pi} c \Tr_Q (F_Q A A^\ast)-c \bigg( \frac{\i}{2 \pi} \bigg)^2 \Tr_Q (D'' A D'A^\ast)-\omega \ch_1(Q,h|_Q)\\
&+\frac{\i}{2 \pi} \omega \Tr_Q (A A^\ast)=0.
\end{aligned}
\end{equation}
Subtracting \eqref{expansion of the equation on Q} from \eqref{expansion of the equation on S} and integrating over $X$, we obtain
\[
4c \ch_2(S)-2 [\omega] \cdot \ch_1(S)+\frac{\i}{\pi} \int_X \big(c \cdot c_1(E,h)-\omega \big) \Tr (A A^\ast)=2c \ch_2(E)-[\omega] \cdot \ch_1(E).
\]
Since $h$ is $J$-Griffiths positive, we know that $c \cdot c_1(E,h)-\omega>0$. Thus we have
\[
4c \ch_2(S)-2 [\omega] \cdot \ch_1(S) \leq 2c \ch_2(E)-[\omega] \cdot \ch_1(E).
\]
Substituting $c=([\omega] \cdot \ch_1(E))/(2 \ch_2(E))$, we conclude that
\begin{equation} \label{inequality in general case}
\ch_2(S) \cdot \big([\omega] \cdot \ch_1(E) \big) \leq \big([\omega] \cdot \ch_1(S) \big) \cdot \ch_2(E).
\end{equation}
Moreover, the integral
\[
\i \int_X \big(c \cdot c_1(E,h)-\omega \big) \Tr (A A^\ast)
\]
vanishes if and only if $A=0$. Thus if $E$ is indecomposable and $\rank S=1$, the inequality \eqref{inequality in general case} is strict.
\end{proof}

\section{Examples} \label{Examples}
\subsection{Projective spaces} \label{Projective spaces}
Let $\omega_{\FS} \in c_1(\cO(1))$ be the Fubini--Study metric on a projective space $\C\P^n$. The metric $\omega_{\FS}$ produces a Hermitian metric $H$ on the holomorphic tangent bundle $T' \C\P^n$ with curvature form $F_H$. In this subsection, we will show that the Hermitian metric $H$ satisfies the $J$-equation
\begin{equation} \label{J-equation on projective spaces}
F_H^n-\omega_{\FS} \Id_{T' \C\P^n} F_H^{n-1}=0.
\end{equation}
Since the isometry group $\SU(n+1)$ acts on transitively on $\C\P^n$, it suffices to show \eqref{J-equation on projective spaces} at a point $[1,0,\ldots,0] \in \C\P^n$. We take a Euclidean coordinates $z=(z^1,\ldots,z^n)$ centered at $[1,0,\ldots,0]$, on which the Fubini--Study metric $\omega_{\FS}$ can be written as
\[
\omega_{\FS}=\frac{\i}{2 \pi} \p \bp \log (1+|z|^2)=\frac{\i}{2\pi} \sum_{i,j} H_{ij} dz^i dz^{\bar{j}},
\]
where the Hermitian matrix $(H_{ij})$ is given by
\[
H_{ij}:=\frac{1}{1+|z|^2} \tilde{H}_{ij}, \quad \tilde{H}_{ij}:=\d_{ij}-\frac{z^i z^{\bar{j}}}{1+|z|^2}.
\]
Since
\[
\p H_{ij}=-\frac{1}{(1+|z|^2)^2} \sum_k z^{\bar{k}} \p z^k \tilde{H}_{ij}+\frac{1}{1+|z|^2} \p \tilde{H}_{ij},
\]
\[
\begin{aligned}
H^{-1} \p H&=(1+|z|^2) \tilde{H}^{-1} \bigg[ -\frac{1}{(1+|z|^2)^2} \sum_k z^{\bar{k}} \p z^k \tilde{H}+\frac{1}{1+|z|^2} \p \tilde{H} \bigg] \\
&=-\frac{1}{1+|z|^2} \sum_k z^{\bar{k}} \p z^k \Id_{T' \C\P^n}+\tilde{H}^{-1} \p \tilde{H} \\
&=-\p \log (1+|z|^2) \Id_{T' \C\P^n}+\tilde{H}^{-1} \p \tilde{H},
\end{aligned}
\]
the curvature $F_H$ is expressed as
\[
F_H=\frac{\i}{2\pi} \bp (H^{-1} \p H)=\omega_{\FS} \Id_{T' \C\P^n}+\frac{\i}{2\pi} \bp (\tilde{H}^{-1} \p \tilde{H}).
\]
On the other hand, we see that
\[
\begin{aligned}
\sum_k (\d_{ik}+z^i z^{\bar{k}}) \bigg( \d_{kj}-\frac{z^k z^{\bar{j}}}{1+|z|^2} \bigg)&=\sum_k \bigg( \d_{ik} \d_{kj}-\d_{ik} \frac{z^k z^{\bar{j}}}{1+|z|^2}+z^i z^{\bar{k}} \d_{kj}-\frac{z^k z^{\bar{k}} z^i z^{\bar{j}}}{1+|z|^2} \bigg) \\
&=\d_{ij}-\frac{z^i z^{\bar{j}}}{1+|z|^2}+z^i z^{\bar{j}}-\frac{|z|^2 z^i z^{\bar{j}}}{1+|z|^2} \\
&=\d_{ij}.
\end{aligned}
\]
So the inverse matrix $(\tilde{H}^{ik})$ of $(\tilde{H}_{ik})$ satisfies
\[
\tilde{H}^{ik}=\d_{ik}+z^i z^{\bar{k}}.
\]
By using this, we compute the term $\bp (\tilde{H}^{-1} \p \tilde{H})$ as
\[
\p \tilde{H}_{kj}=\frac{1}{(1+|z|^2)^2} \bigg( \sum_{\ell} z^{\bar{\ell}} \p z^\ell \bigg) z^k z^{\bar{j}}-\frac{1}{1+|z|^2} z^{\bar{j}} \p z^k,
\]
\[
\begin{aligned}
(\tilde{H}^{-1} \p \tilde{H})_{ij}&=\sum_k \tilde{H}^{ik} \p \tilde{H}_{kj} \\
&=\sum_k (\d_{ik}+z^i z^{\bar{k}}) \bigg[ \frac{1}{(1+|z|^2)^2} \bigg( \sum_{\ell} z^{\bar{\ell}} \p z^\ell \bigg) z^k z^{\bar{j}}-\frac{1}{1+|z|^2} z^{\bar{j}} \p z^k \bigg] \\
&=\frac{1}{(1+|z|^2)^2} \bigg( \sum_{\ell} z^{\bar{\ell}} \p z^\ell \bigg) z^i z^{\bar{j}}-\frac{1}{1+|z|^2} z^{\bar{j}} \p z^i+\frac{|z|^2}{(1+|z|^2)^2} \bigg( \sum_\ell z^{\bar{\ell}} \p z^\ell \bigg) z^i z^{\bar{j}}\\
&-\frac{1}{1+|z|^2} \bigg( \sum_k z^{\bar{k}} \p z^k \bigg) z^i z^{\bar{j}} \\
&=-\frac{1}{1+|z|^2} z^{\bar{j}} \p z^i,
\end{aligned}
\]
\[
\bp (\tilde{H}^{-1} \p \tilde{H})_{ij}=\frac{1}{1+|z|^2} \p z^i \bp z^{\bar{j}}+\frac{1}{(1+|z|^2)^2} \bigg( \sum_k z^k \bp z^{\bar{k}} \bigg) z^{\bar{j}} \p z^i.
\]
So at $z=0$, we obtain the following expression of the curvature $F_H$
\[
(F_H)_{\a \b}=\frac{\i}{2 \pi} \sum_{i,j} \d_{ij} dz^i dz^{\bar{j}} \d_{\a \b}+\frac{\i}{2 \pi} dz^\a dz^{\bar{\b}}.
\]
Thus we can compute $(F_H)^n$ as
\[
\begin{aligned}
& \big( (F_H)^n \big)_{\a \b} \\
&=\omega_{\FS}^n \d_{\a \b}+\sum_{r=0}^{n-1} \dbinom{n}{r} \omega_{\FS}^r \sum_{k_1,\ldots,k_{n-r-1}} \bigg( \frac{\i}{2 \pi} dz^\a dz^{\overline{k_1}} \bigg) \bigg( \frac{\i}{2 \pi} dz^{k_1} dz^{\overline{k_2}} \bigg) \cdots \bigg( \frac{\i}{2 \pi} dz^{k_{n-r-1}} dz^{\bar{\b}} \bigg) \\
&=\omega_{\FS}^n \d_{\a \b}+\sum_{r=0}^{n-1} \dbinom{n}{r} \omega_{\FS}^r (-\omega_{\FS})^{n-r-1} \bigg( \frac{\i}{2 \pi} dz^\a dz^{\bar{\b}} \bigg) \\
&=\omega_{\FS}^n \d_{\a \b}+(n-1)! \bigg( \frac{\i}{2 \pi} \bigg)^{n-1} \sum_{j=1}^n (dz^1 dz^{\bar{1}} \cdots \hat{dz^j dz^{\bar{j}}} \cdots dz^n dz^{\bar{n}})\\
& \wedge \sum_{r=0}^{n-1} \dbinom{n}{r} (-1)^{n-r-1} \bigg( \frac{\i}{2 \pi} dz^\a dz^{\bar{\b}} \bigg) \\
&=\omega_{\FS}^n \d_{\a \b} \bigg(1+\frac{1}{n} \sum_{r=0}^{n-1} \dbinom{n}{r} (-1)^{n-r-1} \bigg) \\
&=\bigg(1+\frac{1}{n} \bigg) \omega_{\FS}^n \d_{\a \b},
\end{aligned}
\]
where the notation $\widehat{f}$ means eliminating the term $f$. Similarly, we have
\[
\begin{aligned}
& \big( (F_H)^{n-1} \big)_{\g \b} \\
&=\omega_{\FS}^{n-1} \d_{\g \b}+\sum_{r=0}^{n-2} \dbinom{n-1}{r} \omega_{\FS}^r \sum_{k_1,\ldots,k_{n-r-2}} \bigg( \frac{\i}{2 \pi} dz^\g dz^{\overline{k_1}} \bigg) \bigg( \frac{\i}{2 \pi} dz^{k_1} dz^{\overline{k_2}} \bigg) \cdots \bigg( \frac{\i}{2 \pi} dz^{k_{n-r-2}} dz^{\bar{\b}} \bigg) \\
&=\omega_{\FS}^{n-1} \d_{\g \b}+\sum_{r=0}^{n-2} \dbinom{n-1}{r} \omega_{\FS}^r (-\omega_{\FS})^{n-r-2} \bigg( \frac{\i}{2 \pi} dz^\g dz^{\bar{\b}} \bigg).
\end{aligned}
\]
So multiplying $\omega_{\FS} \Id_{T' \C\P^n}$ to the above, we obtain
\[
\begin{aligned}
\big( (\omega_{\FS} \Id_{T' \C\P^n} \big) (F_H)^{n-1} \big)_{\a \b}&=\sum_{\g=1}^n \omega_{\FS} \d_{\a \g} \bigg[ \omega_{\FS}^{n-1} \d_{\g \b}+\sum_{r=0}^{n-2} \dbinom{n-1}{r} \omega_{\FS}^r (-\omega_{\FS})^{n-r-2} \bigg( \frac{\i}{2 \pi} dz^\g dz^{\bar{\b}} \bigg) \bigg] \\
&=\omega_{\FS}^n \d_{\a \b}+\omega_{\FS}^{n-1} \wedge \sum_{r=0}^{n-2} \dbinom{n-1}{r}(-1)^{n-r-2} \bigg( \frac{\i}{2 \pi} dz^\a dz^{\bar{\b}} \bigg) \\
&=\omega_{\FS}^n \d_{\a \b}+(n-1)! \bigg( \frac{\i}{2 \pi} \bigg)^{n-1} \sum_{j=1}^n(dz^1 dz^{\bar{1}} \cdots \hat{dz^j dz^{\bar{j}}} \cdots dz^n dz^{\bar{n}}) \\
& \wedge \sum_{r=0}^{n-2} \dbinom{n-1}{r}(-1)^{n-r-2} \bigg( \frac{\i}{2 \pi} dz^\a dz^{\bar{\b}} \bigg) \\
&=\omega_{\FS}^n \d_{\a \b} \bigg( 1+\frac{1}{n} \sum_{r=0}^{n-2} \dbinom{n-1}{r}(-1)^{n-r-2} \bigg) \\
&=\bigg(1+\frac{1}{n} \bigg) \omega_{\FS}^n \d_{\a \b}.
\end{aligned}
\]
From the above computations, we know that
\begin{equation} \label{cohomological invariants for projective spaces}
n \ch_n(T' \C\P^n)=[\omega_{FS}] \cdot \ch_{n-1}(T' \C\P^n)=\frac{n+1}{(n-1)!}
\end{equation}
and the Hermitian metric $H$ solves \eqref{J-equation on projective spaces}.

Now we assume $n=2$. We will check the $J$-positivity of $H$, \ie for all $a'' \in \Omega^{0,1}(\End(T' \C\P^n))$, the inequality
\[
-\i \Tr \bigg(a''F_H(a'')^\ast+F_Ha'' (a'')^\ast-\omega_{\FS} \Id_{T' \C\P^2} a'' (a'')^\ast \bigg)>0
\]
holds at all points on $\C\P^2$ where $a'' \neq 0$. By symmetry, it suffices once again to prove this at $z=0$. Suppose $a''=(\overline{a_{\b\a}})$ is a $2 \times 2$ matrix valued $(0,1)$-form, where $\overline{a_{\b\a}}$ denotes the $(\a,\b)$-component of $a''$, \ie $(a'')^\ast=(a_{\a\b})$. We set $a_{\a\b}=\sum_\mu a_{\a\b,\mu} dz^\mu$. Then we compute each term as
\[
\begin{aligned}
-\i \Tr \big(a''F_H (a'')^\ast \big)&=-\i \sum_{\a,\b,\g} \overline{a_{\b\a}} \bigg( \omega_{\FS} \d_{\b \g}+\frac{\i}{2\pi} dz^\b dz^{\bar{\g}} \bigg) a_{\g \a}\\
&=\i \sum_{\a,\b} \omega_{\FS} a_{\b\a} \overline{a_{\b\a}}-\frac{1}{2 \pi} \sum_{\a,\b,\g} dz^\b dz^{\bar{\g}} a_{\g\a} \overline{a_{\b\a}},
\end{aligned}
\]
\[
\begin{aligned}
-\i \Tr \big( F_H a'' (a'')^\ast \big)&=\i \Tr \big( (a'')^\ast F_H a'' \big)\\
&=\i \sum_{\a,\b,\g} a_{\a\b} \bigg( \omega_{\FS} \d_{\b \g}+\frac{\i}{2\pi} dz^\b dz^{\bar{\g}} \bigg) \overline{a_{\a\g}}\\
&=\i \sum_{\a,\b} \omega_{\FS} a_{\a\b} \overline{a_{\a\b}}-\frac{1}{2 \pi} \sum_{\a,\b,\g} dz^\b dz^{\bar{\g}} a_{\a\b} \overline{a_{\a\g}},
\end{aligned}
\]
\[
\i \Tr \big( \omega_{\FS} \Id_{T' \C\P^2} a'' (a'')^\ast \big)
=\i \sum_{\a,\b} \omega_{\FS} a_{\a\b} \overline{a_{\a\b}}
=\frac{(\i)^2}{2 \pi} dz^1 dz^{\bar{1}} dz^2 dz^{\bar{2}} \sum_{\a,\b,\mu}|a_{\a\b,\mu}|^2.
\]
Thus we have
\[
\begin{aligned}
&-\i \Tr \bigg(a''F_H (a'')^\ast+F_H a'' (a'')^\ast-\omega_{\FS} \Id_{T' \C\P^2} a'' (a'')^\ast \bigg) \\
&=\frac{(\i)^2}{2 \pi} dz^1 dz^{\bar{1}} dz^2 dz^{\bar{2}} \bigg( \sum_{\a,\b,\mu} |a_{\a\b,\mu}|^2+\sum_\a |a_{1\a,2}|^2+\sum_\a |a_{2\a,1}|^2-2\Re \bigg( \sum_\a a_{2\a,2} \overline{a_{1\a,1}} \bigg) \\
&+\sum_\a |a_{\a1,2}|^2+\sum_\a |a_{\a2,1}|^2-2 \Re \bigg( \sum_\a a_{\a1,2}\overline{a_{\a2,1}} \bigg) \bigg).
\end{aligned}
\]
By Cauchy--Schwarz inequality, we observe that
\begin{equation} \label{CS1}
2\Re \bigg( \sum_\a a_{2\a,2} \overline{a_{1\a,1}} \bigg) \leq 2 \sqrt{\sum_\a |a_{2\a,2}|^2} \sqrt{\sum_\a |a_{1\a,1}|^2}
\leq \sum_\a |a_{2\a,2}|^2+\sum_\a |a_{1\a,1}|^2,
\end{equation}
\begin{equation} \label{CS2}
2 \Re \bigg( \sum_\a a_{\a1,2}\overline{a_{\a2,1}} \bigg) \leq 2 \sqrt{\sum_\a |a_{\a1,2}|^2} \sqrt{\sum_\a |a_{\a2,1}|^2} \leq \sum_\a |a_{\a1,2}|^2+\sum_\a |a_{\a2,1}|^2.
\end{equation}
Using these inequalities, we have
\begin{equation} \label{semipositivity estimate}
\begin{aligned}
&-\i \Tr \bigg(a''F_H (a'')^\ast+F_H a'' (a'')^\ast-\omega_{\FS} \Id_{T' \C\P^2} a'' (a'')^\ast \bigg) \\
& \geq \frac{(\i)^2}{\pi} dz^1 dz^{\bar{1}} dz^2 dz^{\bar{2}} \bigg( |a_{11,2}|^2+|a_{12,2}|^2+|a_{21,1}|^2+|a_{22,1}|^2  \bigg) \\
&\geq 0.
\end{aligned}
\end{equation}
Moreover, the equality holds if and only if the equality holds in all of the three inequalities \eqref{CS1}, \eqref{CS2}, \eqref{semipositivity estimate}, \ie
\[
a_{11,2}=a_{12,2}=a_{21,1}=a_{22,1}=0,
\]
\[
\sum_\a |a_{2\a,2}|^2=\sum_\a |a_{1\a,1}|^2, \quad \sum_\a |a_{\a1,2}|^2=\sum_\a |a_{\a2,1}|^2,
\]
and sets of vectors $\big((a_{2 \a,2})_{\a=1,2}, (a_{1\a,1})_{\a=1,2} \big)$, $\big((a_{\a1,2})_{\a=1,2}, (a_{\a2,1})_{\a=1,2} \big)$ are linearly dependent. This occurs if and only if $a''=0$. Thus $H$ is $J$-positive. Since $T' \C\P^2$ is simple, we can apply Lemma \ref{small deformation} to know that for any K\"ahler form $\omega$ sufficiently close to $\omega_{\FS}$ (in $C^{\ell,\b}$ for some $\ell \in \Z_{\geq 0}$ and $\b \in (0,1)$), we have a solution to \eqref{J-equation} with respect to $\omega$.
\begin{rk}
In the case $n=2$, we can also check \eqref{cohomological invariants for projective spaces} directly by using a well-known formula for Chern classes of $T' \C\P^n$
\[
c_k(T' \C\P^n)=\dbinom{n+1}{k} c_1(\cO(1))^k \quad (k=0,1,\ldots,n).
\]
\end{rk}

\subsection{Sufficiently smooth vector bundles and asymptotic $J$-stability} \label{Sufficiently smooth vector bundles and asymptotic J-stability}
Let $L$ be an ample line bundle over a compact K\"ahler surface $(X,\omega)$ such that
\[
\big( 2([\omega] \cdot c_1(L)) c_1(L)-c_1(L)^2 [\omega] \big) \cdot Y>0
\]
for any curve $Y \subset X$. By the Nakai--Moishezon type criterion \cite{Son20}, this condition is equivalent to the existence of a solution $\chi \in c_1(L)$
\begin{equation} \label{J-equation on surfaces repeat}
\frac{[\omega] \cdot c_1(L)}{c_1(L)^2} \chi^2-\omega \chi=0.
\end{equation}
In particular, $\eta:=2([\omega] \cdot c_1(L)) \chi-c_1(L)^2 \omega$ is a K\"ahler form. In this subsection, we will prove the following:
\begin{thm}[Theorem \ref{existence of a solution on twisted bundles}] \label{existence of a solution on twisted bundles repeat}
Let $(X,\omega)$, $L$, $\chi$ ,$\eta$ as above, $E$ a holomorphic vector bundle of rank $r$ over $X$, $h_L \in \Herm(L)$ a Hermitian metric with curvature $\chi$ and $D_L$ the associated connection of $h_L$. Let us consider the $J$-equation for $\tilde{h}_k \in \Herm(E \otimes L^k)$ with respect to $\omega$
\begin{equation} \label{J-equation on twisted bundles repeat}
\frac{[\omega] \cdot \ch_1(E \otimes L^k)}{2 \ch_2(E \otimes L^k)} F_{\tilde{h}_k}^2-\omega \Id_E F_{\tilde{h}_k}=0.
\end{equation}
Then we have the following:
\begin{enumerate}
\item The bundle $E \otimes L^k$ satisfies the property \eqref{positivity for Chern characters} with respect to $\omega$ for sufficiently large integer $k$.
\item Assume that there exist a Hermitian metric $h_0 \in \Herm(E)$ and a sequence of $h_0$-compatible integrable connections $D_k$ on $E$ satisfying
\begin{enumerate}
\item The pair $(h_0 \otimes h_L^k,D_k \otimes D_L^k)$ solves \eqref{J-equation on twisted bundles}.
\item $D_k$ are uniformly bounded in $C^1$.
\end{enumerate}
Then for any subbundle $S \subset E$ with $0<\rank S<r$ that is holomorphic with respect to all $D_k$, we have $\varphi_k(S) \leq \varphi_k(E)$ for sufficiently large $k$, with the strict inequality holding on each level $k$ if $E$ is indecomposable with respect to $D_k$.
\item Assume that $E$ is an asymptotically $J$-stable, sufficiently smooth vector bundle and the associated graded object $\Gr(E)$ has at most $2$ Mumford stable components. Then for sufficiently large integer $k$, there exists a $J$-positive Hermitian metric $\tilde{h}_k \in \Herm(E \otimes L^k)$ satisfying \eqref{J-equation on twisted bundles repeat}.
\end{enumerate}
\end{thm}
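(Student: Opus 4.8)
The three assertions are proved by quite different means.

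\smallskip
\noindent\emph{Part (1).} This is cohomological: from $\ch(E\otimes L^k)=\ch(E)\,e^{kc_1(L)}$ one gets
\[
[\omega]\cdot\ch_1(E\otimes L^k)=rk\,([\omega]\cdot c_1(L))+[\omega]\cdot\ch_1(E),\qquad
\ch_2(E\otimes L^k)=\tfrac{rk^2}{2}\,c_1(L)^2+k\,c_1(L)\cdot\ch_1(E)+\ch_2(E),
\]
and ampleness of $L$ on the surface $X$ makes both leading coefficients positive, so \eqref{positivity for Chern characters} holds for $k\gg0$. I record the asymptotics $c_k:=\tfrac{[\omega]\cdot\ch_1(E\otimes L^k)}{2\ch_2(E\otimes L^k)}=\tfrac{[\omega]\cdot c_1(L)}{k\,c_1(L)^2}(1+O(k^{-1}))$ and the identity of forms $\tfrac{2([\omega]\cdot c_1(L))}{c_1(L)^2}\chi-\omega=\tfrac{\eta}{c_1(L)^2}$; in particular $rk\,c_k\,\chi-\omega\to(\tfrac{r}{2}-1)\omega+\tfrac{r}{2c_1(L)^2}\eta$, and $c_kF_k\to\tfrac{[\omega]\cdot c_1(L)}{c_1(L)^2}\chi\Id_E$ when $F_k=F_{\tilde{h}_k}+k\chi\Id_E$ with $\tilde{h}_k$ bounded in $C^2$.

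\smallskip
\noindent\emph{Part (2).} I would rerun the Gauss--Codazzi computation of Theorem \ref{J-stability for rank 2 bundles over surfaces} for the subbundle $S\otimes L^k\subset E\otimes L^k$ with $h_k=\tilde{h}_k\otimes h_L^k$, whose second fundamental form $A_k$ coincides with that of $S\subset E$ for $\tilde{h}_k$. Taking the two diagonal blocks of \eqref{J-equation on twisted bundles repeat}, subtracting, integrating over $X$, and integrating the terms in $D'A_k^\ast$ and $D''A_k$ by parts via the Bianchi identity as in that proof, I expect an identity
\[
\frac{2\,[\omega]\cdot\ch_1(S\otimes L^k)}{\varphi_k(E)}\bigl(\varphi_k(S)-\varphi_k(E)\bigr)=-\frac{\i}{\pi}\int_X\bigl(c_k\,c_1(E\otimes L^k,h_k)-\omega\bigr)\wedge\Tr(A_kA_k^\ast)+c_kR_k,
\]
where $R_k$ collects the terms quartic in $A_k$ (which vanish identically in rank $2$) and the subleading curvature pairings. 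Two things that were free in rank $2$ now need proof. First, $c_k\,c_1(E\otimes L^k,h_k)-\omega$ is K\"ahler for $k\gg0$: since $c_1(E\otimes L^k,h_k)=c_1(E,\tilde{h}_k)+rk\chi$ and the $C^2$-bound on $\tilde{h}_k$ makes $c_k\,c_1(E,\tilde{h}_k)=O(k^{-1})$, this form equals $(\tfrac{r}{2}-1)\omega+\tfrac{r}{2c_1(L)^2}\eta+O(k^{-1})$ uniformly. Second, the $C^2$-bound makes $A_k$, $D'A_k^\ast$, $D''A_k$ uniformly bounded, so $R_k=O(1)$ and $c_kR_k=O(k^{-1})$; as $\i\Tr(A_kA_k^\ast)\geq0$ and the prefactor is positive, this already gives $\varphi_k(S)-\varphi_k(E)\leq O(k^{-1})$. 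To sharpen this I would pass, by Arzel\`a--Ascoli, to a $C^1$-limit $\tilde{h}_\infty$ of a subsequence $\tilde{h}_{k_j}$: reading off the $O(1)$ part of \eqref{J-equation on twisted bundles repeat}, whose $O(k^2)$ and $O(k)$ parts cancel because $\chi$ solves \eqref{J-equation on surfaces repeat}, shows that $\tilde{h}_\infty$ satisfies a weak Hermitian--Einstein equation with respect to $\eta$, whence $\mu_\eta(S)\leq\mu_\eta(E)$ for all subbundles $S$, with equality forcing the limiting second fundamental form, hence the holomorphic structure of $E$, to split off $S$. Combined with the expansion $\varphi_k(S)-\varphi_k(E)=\tfrac{\mu_\eta(S)-\mu_\eta(E)}{2([\omega]\cdot c_1(L))^2}+O(k^{-1})$, this yields $\varphi_k(S)<\varphi_k(E)$ for $k\gg0$ whenever $\mu_\eta(S)<\mu_\eta(E)$ --- in particular when $E$ is indecomposable --- while the equal-slope case (where $E$ splits holomorphically along $S$) is dealt with separately using that splitting.

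\smallskip
\noindent\emph{Part (3).} For existence I would follow the perturbative scheme of \cite[Section~4.1]{DMS20} and Leung \cite{Leu97}. Dividing \eqref{J-equation on twisted bundles repeat} by $c_k$ and writing $F_k=F_{\tilde{h}_k}+k\chi\Id_E$, the $O(k^2)$ and $O(k)$ parts cancel because $\chi$ solves \eqref{J-equation on surfaces repeat}, leaving an equation for $\tilde{h}_k$ which, modulo $O(k^{-1})$, is the weak Hermitian--Einstein equation for $E$ with respect to $\eta$. The hypotheses --- $E$ simple, sufficiently smooth, asymptotically $J$-stable with respect to $\eta$ --- are arranged to provide: a solution $\tilde{h}_\infty$ of this limiting equation (so $E$ is $\eta$-stable, with $\tilde{h}_\infty$ unique up to scale); that the linearized $J$-operator at $\tilde{h}_\infty\otimes h_L^k$ is a small perturbation of the Hermitian--Einstein Laplacian with kernel $\R\Id_E$; and that its first nonzero eigenvalue decays only polynomially in $k^{-1}$. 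I would then build a formal solution $\tilde{h}_{k,N}=\tilde{h}_\infty(\Id_E+k^{-1}\Psi_1+\cdots+k^{-N}\Psi_N)$ solving \eqref{J-equation on twisted bundles repeat} modulo $O(k^{-N-1})$ --- at each order solving a linear elliptic equation whose obstruction lies in $\R\Id_E$ and vanishes after taking the trace and integrating --- and correct $\tilde{h}_{k,N}$ to an exact solution $h_k=\tilde{h}_k\otimes h_L^k$ by a quantitative implicit function theorem, with $N$ chosen larger than the polynomial order of the inverse linearization. Finally, $h_k$ is $J$-positive for $k\gg0$ because $c_kF_k\to\tfrac{[\omega]\cdot c_1(L)}{c_1(L)^2}\chi\Id_E$, so the $J$-positivity quadratic form of $h_k$ converges uniformly to $a''\mapsto\tfrac{1}{c_1(L)^2}\,\eta\wedge\bigl(-\i\Tr(a''(a'')^\ast)\bigr)$, which is positive definite since on a surface the wedge of the K\"ahler form $\eta$ with a nonzero semipositive $(1,1)$-form is strictly positive.

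\smallskip
\noindent\emph{Main obstacle.} The real difficulty is Part (3): making the Leung-type construction uniform in $k$ --- identifying the order-by-order obstructions with the inequalities defining asymptotic $J$-stability with respect to $\eta$, extracting from ``sufficiently smooth'' the polynomial lower bound on the small eigenvalue of the linearization, and carrying the requisite estimates (which become $C^\infty$-bounds in the Mumford-stable special case) through the implicit function theorem. A smaller but genuine subtlety is the equal-slope borderline case in Part (2).
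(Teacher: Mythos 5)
Your part (1) is fine and matches the paper. For part (2), however, your way of exploiting the Gauss--Codazzi identity falls short of the statement. Asymptotic $J$-semistability demands the exact inequality $\varphi_k(S)\leq\varphi_k(E)$ for every sufficiently large $k$, while your estimate only yields $\varphi_k(S)-\varphi_k(E)\leq O(k^{-1})$; since $\varphi_k(S)-\varphi_k(E)=\tfrac{\mu_\eta(S)-\mu_\eta(E)}{2([\omega]\cdot c_1(L))^2}+O(k^{-1})$, this decides nothing in the borderline case $\mu_\eta(S)=\mu_\eta(E)$, which is precisely where the existence hypothesis must be used. Your plan to ``deal with the equal-slope case separately using the splitting'' is not an argument: even if $E=S\oplus Q$ holomorphically, the solutions $h_k$ need not be block diagonal, so the second fundamental forms $A_k$ need not vanish, and the required inequality does not follow from the splitting alone. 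The paper's proof never passes to a limit: by \cite[Lemma 3.2]{DMS20} the $C^2$-bound gives uniform $C^1$/$C^0$ bounds on $A_k$ and $\tilde F_k$, and then both the quartic term and the curvature-pairing error are bounded by $Ck^{-1}$ times the \emph{same} integral $\i\int_X\omega\Tr(A_kA_k^\ast)$, so they are absorbed into the strictly positive term $\tfrac{\i}{\pi c_1(L)^2}\int_X\eta\Tr(A_kA_k^\ast)$; this yields $4c_k\ch_2(S\otimes L^k)\leq 2[\omega]\cdot\ch_1(S\otimes L^k)$, i.e.\ $\varphi_k(S)\leq\varphi_k(E)$ exactly for every large $k$, with strict inequality when $A_k\neq0$ (the indecomposable case). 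Your Arzel\`a--Ascoli step also needs care (a $C^2$ bound on $\tilde h_k$ controls curvature only in $C^0$, so the limiting weak Hermitian--Einstein equation must first be interpreted weakly), but the borderline case is the genuine gap.

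For part (3) your sketch presumes that the hypotheses provide a weak Hermitian--Einstein metric $\tilde h_\infty$ on $E$ itself (``so $E$ is $\eta$-stable''). That is false in general: asymptotic $J$-stability implies only Mumford \emph{semi}stability with respect to $\eta$ (Proposition \ref{Mumford stability vs asymptotic J-stability}), and ``sufficiently smooth'' is Leung's condition designed exactly to include strictly semistable $E$. In that case there is no weak HE metric on $E$, the near-kernel of the limiting linearization is not $\R\Id_E$ but $\i\fk=\Lie(K)$ coming from $\Aut(\Gr(E))$, and the obstructions cannot be killed by the trace normalization; so your ansatz $\tilde h_{k,N}=\tilde h_\infty(\Id_E+k^{-1}\Psi_1+\cdots)$ proves only the Mumford-stable special case (Theorem \ref{existence theorem in Mumford stable case}). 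The paper's argument instead follows \cite{ST20}: one equips the locally free graded object $\Gr(E)$ with the product weak-HE metric, deforms the holomorphic structure by $D_0''+t\g$ with $t=\lambda\epsilon^q$ ($2q$ the order of discrepancy), and builds approximate solutions order by order; asymptotic $J$-stability enters exactly at order $\epsilon^{2q}$, where the $\i\fk$-component of the obstruction is cancelled by solving for $\lambda$ using the positivity of $[\varphi_\epsilon(V_2)-\varphi_\epsilon(E)]_{2q}$ (Lemma \ref{approximate solutions to order 2q}), the linearization obeys the lower bound $C\epsilon^{2q}$ on the complement of $\Id_E$ (Lemma \ref{estimate for the linearization}), and the quantitative implicit function theorem is applied with $N=4q$. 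Your ``main obstacle'' remarks gesture toward these points, but the construction you actually propose would not produce approximate solutions in the strictly semistable case, so part (3) as sketched does not establish the theorem.
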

When we discuss the asymptotic $J$-stability, it is convenient to express $c_k$ in terms of $\varphi_k(E)$ as
\begin{equation} \label{c and varphi}
c_k:=\frac{[\omega] \cdot \ch_1(E \otimes L^k)}{2 \ch_2(E \otimes L^k)}=\frac{1}{2 \varphi_k(E)}.
\end{equation}
Then the first clue is to compute the asymptotic expansion of the asymptotic $J$-slope as follows:
\begin{prop} \label{Mumford stability vs asymptotic J-stability}
Let $(X,\omega)$, $L$, $\eta$ be as in Theorem \ref{existence of a solution on twisted bundles repeat} and $\cE$ a coherent sheaf over $X$. Then we have the following:
\begin{enumerate}
\item If $\cE$ is asymptotically $J$-semistable, then it is Mumford semistable with respect to $\eta$.
\item If $\cE$ is Mumford stable with respect to $\eta$, then it is asymptotically $J$-stable.
\end{enumerate}
\end{prop}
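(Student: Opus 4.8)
The plan is to derive both statements from a single asymptotic expansion of the asymptotic $J$-slope. Write $a:=[\omega]\cdot c_1(L)>0$, $b:=c_1(L)^2$, and for a coherent subsheaf $\cS\subset\cE$ put $s:=\rank\cS$. First I would use $\ch(\cS\otimes L^k)=\ch(\cS)\cdot e^{k c_1(L)}$ to record
\[
\ch_1(\cS\otimes L^k)=\ch_1(\cS)+k\,s\,c_1(L),\qquad \ch_2(\cS\otimes L^k)=\ch_2(\cS)+k\,\big(c_1(L)\cdot\ch_1(\cS)\big)+\tfrac{s}{2}k^2 b,
\]
so that $\varphi_k(\cS)$ is a ratio of a degree-$2$ polynomial in $k$ to the degree-$1$ polynomial $s a k+[\omega]\cdot\ch_1(\cS)$, which is positive for $k\gg 0$ by ampleness of $L$. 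Dividing through by $s a k$ and expanding the geometric series gives
\[
\varphi_k(\cS)=\frac{b}{2a}\,k+\frac{2a\,\big(c_1(L)\cdot\ch_1(\cS)\big)-b\,\big([\omega]\cdot\ch_1(\cS)\big)}{2a^2\,s}+O(k^{-1})\qquad(k\to\infty).
\]
The key algebraic point is that $[\chi]=c_1(L)$, hence $[\eta]=2a\,c_1(L)-b\,[\omega]$ in cohomology, and $\ch_1=c_1$ for any coherent sheaf, so the numerator of the constant term is exactly $[\eta]\cdot c_1(\cS)$; therefore
\[
\varphi_k(\cS)=\frac{b}{2a}\,k+\frac{\mu_\eta(\cS)}{2a^2}+O(k^{-1})\qquad(k\to\infty).
\]

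Next I would exploit that the leading term $\tfrac{b}{2a}k$ does \emph{not} depend on $\cS$. Applying the expansion to both $\cS$ and $\cE$ and subtracting,
\[
\varphi_k(\cS)-\varphi_k(\cE)=\frac{1}{2a^2}\big(\mu_\eta(\cS)-\mu_\eta(\cE)\big)+O(k^{-1})\qquad(k\to\infty).
\]
For (1): if $\cE$ is asymptotically $J$-semistable with respect to $(\omega,L)$ but fails to be Mumford semistable with respect to $\eta$, pick a coherent subsheaf $\cS$ with $\rank\cS>0$ and $\mu_\eta(\cS)>\mu_\eta(\cE)$; then the right-hand side tends to the strictly positive limit $\tfrac{1}{2a^2}(\mu_\eta(\cS)-\mu_\eta(\cE))$, so $\varphi_k(\cS)>\varphi_k(\cE)$ for all sufficiently large $k$, contradicting the defining property of asymptotic $J$-semistability applied to this $\cS$. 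For (2): if $\cE$ is Mumford stable with respect to $\eta$, then for every coherent subsheaf $\cS$ with $0<\rank\cS<\rank\cE$ one has $\mu_\eta(\cS)<\mu_\eta(\cE)$, so the right-hand side tends to a strictly negative limit, whence $\varphi_k(\cS)<\varphi_k(\cE)$ for all sufficiently large $k$; this is precisely asymptotic $J$-stability with respect to $(\omega,L)$.

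I do not anticipate a genuine obstacle: the argument is essentially the computation of the first two terms of the $k\to\infty$ expansion of a rational function. The two points demanding care are (i) the bookkeeping in that expansion, in particular correctly identifying the constant term via the identity $[\eta]\cdot c_1(\cS)=2a\,\big(c_1(L)\cdot c_1(\cS)\big)-b\,\big([\omega]\cdot c_1(\cS)\big)$ that comes from $[\eta]=2a\,c_1(L)-b\,[\omega]$, and (ii) observing that the definitions of asymptotic $J$-(semi)stability only require the inequality between $\varphi_k(\cS)$ and $\varphi_k(\cE)$ for $k$ large \emph{depending on $\cS$}, so no uniformity over the family of subsheaves is needed and the pointwise limit above suffices.
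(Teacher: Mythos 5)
Your proposal is correct and follows essentially the same route as the paper: both compute the two-term expansion $\varphi_k(\cS)=\frac{c_1(L)^2}{2([\omega]\cdot c_1(L))}k+\frac{\mu_\eta(\cS)}{2([\omega]\cdot c_1(L))^2}+O(k^{-1})$, identifying the constant term via $[\eta]=2([\omega]\cdot c_1(L))c_1(L)-c_1(L)^2[\omega]$, and then read off both implications from the fact that the leading term is independent of $\cS$. Your remark that no uniformity in $\cS$ is needed is exactly the point implicit in the paper's "the desired properties follow immediately from this formula."
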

\begin{proof}
Let $\cS$ be any coherent subsheaf with $s:=\rank \cS>0$. By using the relation $[\eta]=2([\omega] \cdot c_1(L)) \cdot c_1(L)-c_1(L)^2 \cdot [\omega]$, we observe that
\[
\begin{aligned}
\varphi_k(\cS)&=\frac{\frac{1}{2}k^2s c_1(L)^2+k c_1(\cS) \cdot c_1(L)+\ch_2(\cS)}{ks ([\omega] \cdot c_1(L))+([\omega] \cdot c_1(\cS))} \\
&=\frac{c_1(L)^2}{2([\omega] \cdot c_1(L))}k+\frac{\mu_\eta(\cS)}{2([\omega] \cdot c_1(L))^2}+O(k^{-1}),
\end{aligned}
\]
where $\mu_\eta(\cS)=(c_1(\cS) \cdot [\eta])/s$. Then the desired properties follow immediately from this formula.
\end{proof}
In particular, the above formula shows that
\begin{equation} \label{asymptotic expansion of ck}
c_k=\frac{[\omega] \cdot c_1(L)}{c_1(L)^2}k^{-1}-\frac{\mu_\eta(E)}{\big( c_1(L)^2 \big)^2}k^{-2}+O(k^{-3}).
\end{equation}

\begin{proof}[Proof of (1) and (2) in Theorem \ref{existence of a solution on twisted bundles}]
The property (1) follows from Proposition \ref{Mumford stability vs asymptotic J-stability}. As for (2), applying the Gauss--Codazzi equation to each pair $(h_0,D_k)$, we know that the curvature $F_k$ of $D_k$ satisfies
\[
F_k=
\begin{pmatrix}
F_{S,k}-\frac{\i}{2 \pi} A_k^\ast A_k & \frac{\i}{2 \pi} D_k' A_k^\ast \\
-\frac{\i}{2 \pi} D_k'' A_k & F_{Q,k}-\frac{\i}{2 \pi} A_k A_k^\ast
\end{pmatrix},
\]
where $Q:=E/S$ and the quantities $F_{S,k}$, $F_{Q,k}$, $A_k$ are defined with respect to $(D_k,h_0)$. Thus the curvature $\tilde{F}_k$ of $(D_k \otimes D_L^k,h_0 \otimes h_L^k)$ is given by
\[
\tilde{F}_k=
\begin{pmatrix}
F_{S,k}+k \chi \Id_S-\frac{\i}{2 \pi} A_k^\ast A_k & \frac{\i}{2 \pi} D_k' A_k^\ast \\
-\frac{\i}{2 \pi} D_k'' A_k & F_{Q,k}+k \chi \Id_Q-\frac{\i}{2 \pi} A_k A_k^\ast
\end{pmatrix}.
\]
By the assumption, we know that $A_k$ and $F_k$ are uniformly bounded in $C^1$ and $C^0$ respectively. Since the pair $(h_0 \otimes h_L^k,D_k \otimes D_L^k)$ satisfies \eqref{J-equation on twisted bundles repeat}, we can apply the similar computation as in the proof of Theorem \ref{J-stability for rank-2 bundles over surfaces} to the decomposition $E \otimes L^k=(S \otimes L^k) \oplus (Q \otimes L^k)$ while taking account of the non-commutativity of endomorphisms in higher rank case. As a result, we obtain
\[
\begin{aligned}
&4c_k \ch_2(S \otimes L^k)-2[\omega] \cdot \ch_1(S \otimes L^k)-2c_k \bigg( \frac{\i}{2 \pi} \bigg)^2 \int_X \Tr (A_k A_k^\ast A_k A_k^\ast) \\
&+\frac{\i}{\pi} c_k \int_X \Tr \big(A_k (F_{S,k}+k \chi \Id_S) A_k^\ast+(F_{Q,k}+k \chi \Id_Q) A_kA_k^\ast \big)-\frac{\i}{\pi} \int_X \omega \Tr (A_kA_k^\ast)=0.
\end{aligned}
\]
Now we will show that the sum of the last three terms in the left hand side is positive. We note that
\[
\frac{\i}{\pi} \omega \Tr(A_kA_k^\ast)=|A_k|^2 \omega^2,
\]
where the norm $|A_k|^2$ is taken with respect to $h_0$ and $\omega$. By using these facts and uniform $C^1$ bound of $A_k$, we observe that
\[
2c_k \bigg( \frac{\i}{2 \pi} \bigg)^2 \int_X \Tr (A_k A_k^\ast A_k A_k^\ast) \leq Ck^{-1} \cdot \i \int_X \omega \Tr (A_k A_k^\ast)
\]
for some uniform constant $C>0$ (independent of $k$). Similarly, by \eqref{asymptotic expansion of ck} and uniform $C^0$ bound of $F_k$, there exists a uniform constant $C'>0$ such that
\[
\begin{aligned}
& \frac{\i}{\pi} c_k \int_X \Tr \big(A_k (F_{S,k}+k \chi \Id_S) A_k^\ast+(F_{Q,k}+k \chi \Id_Q) A_kA_k^\ast \big) \\
& \geq \frac{\i}{\pi c_1(L)^2} \int_X 2 ([\omega] \cdot c_1(L)) \chi \Tr (A_k A_k^\ast)-\i C'k^{-1} \int_X \omega \Tr (A_k A_k^\ast).
\end{aligned}
\]
Thus by using $\eta=2([\omega] \cdot c_1(L)) \chi-c_1(L)^2 \omega$, we have
\[
\begin{aligned}
&4c_k \ch_2(S \otimes L^k)-2[\omega] \cdot \ch_1(S \otimes L^k)
+\frac{\i}{\pi c_1(L)^2} \int_X (\eta-C'' k^{-1} \omega) \Tr (A_kA_k^\ast) \leq 0
\end{aligned}
\]
for some uniform constant $C''>0$. Since $\eta$ is K\"ahler, this together with \eqref{c and varphi} implies that
\[
\varphi_k(S) \leq \varphi_k(E)
\]
for sufficiently large $k$. Moreover, if $E$ is indecomposable, then each $A_k$ is non-trivial. So the above inequality must be strict.
\end{proof}
\begin{rk}
In the case $r=2$, we can replace the assumption in (2) with the existence of $J$-Griffiths positive Hermitian metrics $\tilde{h}_k \in \Herm(E \otimes L^k)$ satisfying \eqref{J-equation on twisted bundles repeat} for sufficiently large $k$. Indeed, for any subbundles $S \subset E$ of $\rank S=1$, we can apply Theorem \ref{J-stability for rank-2 bundles over surfaces} directly to the subbundle $S \otimes L^k \subset E \otimes L^k$ in this case.
\end{rk}
\subsubsection{The Mumford stable case}
Now we will prove Theorem \ref{existence of a solution on twisted bundles repeat} (3) when the associated graded object $\Gr(E)$ has exactly $1$ component, or equivalently, $E$ is Mumford stable with respect to $\eta$.
\begin{thm} \label{existence theorem in Mumford stable case}
Let $(X,\omega)$, $L$, $\eta$ as in Theorem \ref{existence of a solution on twisted bundles repeat}. Assume that $E$ is Mumford stable with respect to $\eta$. Then for sufficiently large integer $k$, there exists a $J$-positive Hermitian metric $\tilde{h}_k \in \Herm(E \otimes L^k)$ satisfying \eqref{J-equation on twisted bundles repeat}.
\end{thm}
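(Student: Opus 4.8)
The plan is to solve \eqref{J-equation on twisted bundles repeat} by deforming, in the small parameter $k^{-1}$, a Hermitian--Einstein metric of $E$ with respect to $\eta$. Write $h_k=\tilde h_k\otimes h_L^k$ with $\tilde h_k\in\Herm(E)$; since the curvature $\chi$ of $h_L$ is a scalar form, the curvature of $h_k$ is $F_k=\tilde F_k+k\chi\,\Id_E$, where $\tilde F_k$ is the curvature of $\tilde h_k$. Expanding the left-hand side of \eqref{J-equation on twisted bundles repeat},
\[
c_kF_k^2-\omega\,\Id_E\,F_k=c_k\tilde F_k^2+2c_kk\,\chi\wedge\tilde F_k+c_kk^2\chi^2\,\Id_E-\omega\wedge\tilde F_k-k\,\chi\wedge\omega\,\Id_E ,
\]
and by \eqref{asymptotic expansion of ck} one has $c_k=ak^{-1}+bk^{-2}+O(k^{-3})$ with $a=([\omega]\cdot c_1(L))/c_1(L)^2$ and $b=-\mu_\eta(E)/(c_1(L)^2)^2$. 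The coefficient of $k$ is $a\chi^2-\chi\wedge\omega$, which vanishes identically because $\chi$ solves \eqref{J-equation on surfaces repeat}; using $2a\chi-\omega=\eta/c_1(L)^2$, the coefficient of $k^0$ is $c_1(L)^{-2}\,\eta\wedge\tilde F_k+b\chi^2\,\Id_E$. Hence, as an equation for $\tilde h_k$, \eqref{J-equation on twisted bundles repeat} takes the form
\[
\Psi_k(\tilde h):=c_1(L)^{-2}\,\eta\wedge\tilde F_{\tilde h}+b\chi^2\,\Id_E+k^{-1}R(\tilde h,k)=0 ,
\]
where $R(\cdot,k)$ is a nonlinear differential operator of order $\le 2$ whose coefficients, and all of whose $k$-derivatives, are bounded uniformly in $k$. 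Note moreover that $\int_X\Tr\Psi_k(\tilde h)=2c_k\ch_2(E\otimes L^k)-[\omega]\cdot\ch_1(E\otimes L^k)=0$ for \emph{every} $\tilde h$, by the definition \eqref{c and varphi} of $c_k$.

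The $k$-independent part $\Psi_\infty(\tilde h)=c_1(L)^{-2}\,\eta\wedge\tilde F_{\tilde h}+b\chi^2\,\Id_E$ is, up to normalization, the weak Hermitian--Einstein operator of $E$ with respect to $\eta$ (on a surface $\eta\wedge\tilde F=\tfrac12(\Lambda_\eta\tilde F)\,\eta^2$, and $\chi^2$ is a volume form). I would then proceed in four steps. \textbf{(i)} Since $E$ is Mumford stable with respect to $\eta$, the Donaldson--Uhlenbeck--Yau theorem \cite{Don85,UY86} yields a smooth Hermitian--Einstein metric $h_\infty\in\Herm(E)$ with $\Psi_\infty(h_\infty)=0$; moreover $E$ is simple, so the linearization $\cL_\infty$ of $\Psi_\infty$ at $h_\infty$ is a formally self-adjoint, elliptic, second-order operator on $\Omega^0(\End(E,h_\infty))$ whose kernel and cokernel are both $\R\,\Id_E$. \textbf{(ii)} Build a formal approximate solution $\tilde h_k^{(N)}=h_\infty\exp(k^{-1}\psi_1+\dots+k^{-N}\psi_N)$, choosing the $\psi_j$ order by order so that $\cL_\infty\psi_j=(\text{explicit error built from }h_\infty,\psi_1,\dots,\psi_{j-1})$; this is solvable at each stage because the error is $L^2$-orthogonal to $\Id_E$ --- which is precisely the identity $\int_X\Tr\Psi_k\equiv0$ above, read off at each order in $k^{-1}$ --- and $\cL_\infty$ has cokernel $\R\,\Id_E$. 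After rescaling this gives $\Psi_k(\tilde h_k^{(N)})=O(k^{-N-1})$ in $C^\infty$. \textbf{(iii)} Linearize $\Psi_k$ at $\tilde h_k^{(N)}$: on the subspace $\{\int_X\Tr(\cdot)\,\omega^2=0\}$ it is $\cL_\infty+O(k^{-1})$, hence invertible for $k$ large with inverse bounded uniformly in $k$ in suitable H\"older norms, and a quantitative inverse function theorem produces, for $k\gg1$, a genuine solution $\tilde h_k\in\Herm(E)$ with $\tilde h_k-\tilde h_k^{(N)}=O(k^{-N})$, smooth by elliptic regularity. (Equivalently, one may run this deformation on the holomorphic structure through the gauge group, as in Lemma \ref{small deformation}.) \textbf{(iv)} Verify $J$-positivity of $h_k=\tilde h_k\otimes h_L^k$: since $F_k=k\chi\,\Id_E+O(1)$ and $c_k=ak^{-1}+O(k^{-2})$, the $(n,n)$-form in the definition of $J$-positivity equals, to leading order in $k$, $c_1(L)^{-2}\,\eta\wedge\big(-\i\,\Tr(a''\wedge(a'')^\ast)\big)$; since $\eta$ is K\"ahler and $-\i\,\Tr(a''\wedge(a'')^\ast)$ is a semipositive $(1,1)$-form vanishing only where $a''=0$, this is strictly positive wherever $a''\neq0$, so $h_k$ is $J$-positive for $k$ large.

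I expect the main obstacle to be step \textbf{(iii)}: making the inverse function theorem quantitative and uniform in $k$. One must show that the linearization of $\Psi_k$ at the approximate solution is uniformly elliptic (of the right order in $k^{-1}$), that its kernel stays uniformly close to $\R\,\Id_E$, that its inverse on the complementary subspace is bounded uniformly in $k$, and that the quadratic remainder of $\Psi_k$ has a $k$-uniform modulus of continuity --- strongly enough that the contraction radius does not decay faster than the $O(k^{-N-1})$ size of the error. This is the $J$-analogue of the estimates for $Z$-critical connections in \cite[Section 4]{DMS20}, with $\eta$ playing the role that their large-volume limit does; the ``sufficiently smooth'' hypothesis required there to run the formal expansion is automatic in our setting, since $\cL_\infty$ is self-adjoint and the errors are trace-integral-free, so the expansion in step \textbf{(ii)} is unobstructed for every Mumford stable $E$. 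Granting the uniform estimates, the remaining points --- mapping properties between H\"older spaces, preservation of the Hermitian and normalization conditions, and undoing the $k$-rescaling to recover \eqref{J-equation on twisted bundles repeat} --- are routine.
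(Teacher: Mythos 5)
Your proposal is correct, and its core is the same as the paper's: the expansion of $c_k F_k^2-\omega\Id_E F_k$ in $k^{-1}$, the cancellation of the $O(k)$ term because $\chi$ solves \eqref{J-equation on surfaces repeat}, the identification of the $k^0$ term with the weak Hermitian--Einstein operator with respect to $\eta$, the use of Donaldson--Uhlenbeck--Yau plus simpleness to invert the linearization on trace-integral-free endomorphisms, and the leading-order verification of $J$-positivity from $\eta>0$. Where you diverge is the perturbation step: you build approximate solutions to arbitrary order $N$ and invoke a quantitative inverse function theorem with $k$-uniform estimates, whereas the paper simply observes (Lemma \ref{P is a deformation of P0}) that the rescaled family $\cP_\e$ extends smoothly across $\e=0$ with $\cP_0$ the weak Hermitian--Einstein operator, and then applies the standard implicit function theorem on Banach manifolds at $(0,\e)=(0,0)$, with $\e=1/k$ treated as a continuous parameter. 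Your heavier scheme does work here --- since the linearization at the approximate solution is an $O(k^{-1})$ perturbation of an invertible operator, the inverse is uniformly bounded and in fact no expansion beyond order zero is needed --- but it is exactly the machinery the paper reserves for the general sufficiently smooth case, where the linearized operator degenerates at rate $\epsilon^{2q}$ and the uniform estimates become the real issue; in the Mumford stable case it is unnecessary. Two small points worth tightening: Donaldson--Uhlenbeck--Yau gives $\Lambda_\eta F=\mathrm{const}\cdot\Id_E$, and passing to $\Psi_\infty(h_\infty)=0$ (right-hand side a multiple of $\chi^2$ rather than $\eta^2$) requires the standard conformal change, for which the paper cites \cite[Proposition 4.2.3]{Kob14}; and in step (iv) you should note that the $O(k^{-1})$ error is quadratic in $a''$ with uniformly bounded coefficients, so it is genuinely absorbed by the positive leading term.
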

To show this, for a fixed $h_0 \in \Herm(E)$, we first consider the asymptotic expansion of the $J$-operator
\[
\cP_\e(D):=\frac{1}{2 \varphi_\e(E)} (F_D+\e^{-1} \chi \Id_E)^2-\omega \Id_E \wedge (F_D+\e^{-1} \chi \Id_E), \quad D \in \cA_{\inte}(E,h_0)
\]
where $\e:=1/k$ and $\varphi_\e(E):=\varphi_k(E)$.
\begin{lem} \label{P is a deformation of P0}
We have
\begin{equation} \label{expansion of the linearized operator}
\cP_\e(D)=\cP_0(D)+O(\e),
\end{equation}
where $\cP_0$ denotes the weak Hermitian--Einstein operator
\[
\cP_0(D):=\frac{1}{c_1(L)^2} \bigg( \eta F_D-\frac{\mu_\eta(E)}{c_1(L)^2} \chi^2 \Id_E \bigg).
\]
The linearization of $\cP_0$ coincides with the Laplacian up to scale
\[
\d|_{D,v} \cP_0=\frac{\i}{2 \pi c_1(L)^2} \eta \wedge (D'D''v-D''D'v)=\frac{1}{4 \pi c_1(L)^2} \Delta_{\eta,D}v, \quad v \in \i \Omega^0(\End(E,h_0)).
\]
\end{lem}
\begin{proof}
By using \eqref{asymptotic expansion of ck}, we have the following asymptotic expansion
\[
\cP_\e(D)=\bigg( \frac{[\omega] \cdot c_1(L)}{c_1(L)^2} \chi^2-\omega \chi \bigg) \Id_E \e^{-1}+\cP_0(D)+O(\e).
\]
Then the crucial point is that the coefficient of $\e^{-1}$ is zero since $\chi$ satisfies \eqref{J-equation on surfaces repeat}. Thus $\cP_\e$ is well-defined even when $\e=0$, and we have $\cP_\e=\cP_0+O(\e)$ as desired.
\end{proof}

\begin{proof}[Proof of Theorem \ref{existence theorem in Mumford stable case}]
By the Kobayashi--Hitchin correspondence \cite{Don85,UY86} and the argument \cite[Proposition 4.2.3]{Kob14}, we can solve the following weak Hermitian--Einstein equation for $h_0 \in \Herm(E)$
\begin{equation} \label{weak HE equation}
\eta F_{h_0}=\frac{\mu_\eta(E)}{c_1(L)^2} \chi^2 \Id_E.
\end{equation}
Indeed, one can easily check that the integral of both hand sides of \eqref{weak HE equation} are equal. Let $D_0$ be the associated connection of $h_0$. Now we try to find a connection $D \in \cA_{\inte}(E,h_0)$ satisfying $\cP_\e(D)=0$ with $\e=1/k$, which is equivalent to say that the pair $(h_0 \otimes h_L^k,D \otimes D_L^k)$ solves \eqref{J-equation on twisted bundles repeat}. For any $\ell \in \Z_{\geq 0}$ and $\b \in (0,1)$, let $\cV^{\ell,\b}$ be a Banach manifold consisting of all $C^{\ell,\b}$ Hermitian endomorphisms $v$ of $(E,h_0)$ satisfying $\int_X \Tr (v) \eta^2=0$. We regard $\cP_\e$ as an operator $\cP \colon \cV^{\ell+2,\b} \times \R \to \cV^{\ell,\b}$ by setting $D:=D_0^{\exp(v)}$. Then we have $\cP(0,0)=0$ from our choice of $D_0$. Since the linearization $\d|_{(0,0)} \cP$ coincides with the Laplacian up to scale and $E$ is simple, we know that the kernel of $\d|_{(0,0)} \cP$ is trivial, and $\d|_{(0,0)} \cP$ is surjective by Fredholm alternative. Hence we can apply the standard implicit function theorem to get a solution $D_k \in \cA_{\inte}(E,h_0)$.

Finally, from the convergence $D_k \to D_0$ in $C^{\ell+2,\b}$, we have an asymptotic expansion
\[
-\i \Tr \bigg( c_k \big( a'' F_{\tilde{D}_k} (a'')^\ast+F_{\tilde{D}_k} a'' (a'')^\ast \big)-\omega \Id_E a'' (a'')^\ast \bigg)
=-\frac{\i}{c_1(L)^2} \eta \Tr \big(a'' (a'')^\ast \big)+O(k^{-1})
\]
for any $a'' \in \Omega^{0,1}(\End(E))$, where $F_{\tilde{D}_k}$ denotes the curvature of $\tilde{D}_k:=D_k \otimes D_L^k$. This shows that $\tilde{D}_k$ is $J$-positive if $k$ is sufficiently large. In particular, we find that $\tilde{D}_k$ is actually smooth by the elliptic regularity. This completes the proof.
\end{proof}

\begin{rk} \label{higher dimensional case}
For a compact K\"ahler manifold $(X,\omega)$ of arbitrary dimension $n$, we assume that there is a solution $\chi \in c_1(L)$ to the $J$-equation
\[
\frac{[\omega] \cdot c_1(L)^{n-1}}{c_1(L)^n} \chi^n-\omega \chi^{n-1}=0
\]
with $\eta:=n([\omega] \cdot c_1(L)^{n-1}) \chi^{n-1}-(n-1) c_1(L)^n \omega \chi^{n-2}>0$. Then in the same way as in the $n=2$ case, we compute the asymptotic expansion of the $J$-operator $\cP_\e(D)$ as
\[
\cP_\e(D)=\bigg( \frac{[\omega] \cdot c_1(L)^{n-1}}{c_1(L)^n} \chi^n-\omega \chi^{n-1} \bigg) \Id_E \e^{1-n}+\frac{\e^{2-n}}{c_1(L)^n} \bigg( \eta F_D-\frac{c_1(E) \cdot [\eta]}{r c_1(L)^n} \chi^n \Id_E \bigg)+O(\e^{3-n}).
\]
Thus the $\e^{1-n}$-term is zero, and $\cP_\e(D_0)=O(\e^{3-n})$ holds if $D_0$ satisfies
\begin{equation} \label{equation arising from the leading term}
\eta F_{D_0}=\frac{c_1(E) \cdot [\eta]}{r c_1(L)^n} \chi^n \Id_E.
\end{equation}
As far as the author's knowledge, the solvability of \eqref{equation arising from the leading term} is not  well understood for $n \geq 3$, that is the only reason why we restrict ourselves to the case $n=2$.
\end{rk}

\subsubsection{The sufficiently smooth case; the two components case}
Now we will consider a more general situation. We start with the following potentially useful proposition, so called the ``see-saw'' property which relates $\varphi_k(\cS)$ with $\varphi_k(\cE/\cS)$ for any coherent subsheaf $\cS \subset \cE$, and can be proved similarly as in the Mumford stable case \cite[Proposition 5.7.4, Corollary 5.7.14]{Kob14}:
\begin{prop} \label{see-saw property}
Let $\cE$ be a torsion-free coherent sheaf and
\[
0 \to \cS \to \cE \to \cQ \to 0
\]
a short exact sequence of coherent sheaves over $X$ with $0<\rank \cS<\rank \cE$. Then for sufficiently large $k$, we have
\begin{equation} \label{additive property in short exact sequence}
\big( [\omega] \cdot \ch_1(\cS \otimes L^k) \big) (\varphi_k(\cE)-\varphi_k(\cS))+\big( [\omega] \cdot \ch_1(\cQ \otimes L^k) \big) (\varphi_k(\cE)-\varphi_k(\cQ))=0.
\end{equation}
In particular, we have the following:
\begin{enumerate}
\item If $\cE$ is asymptotically $J$-semistable (resp. $J$-stable), then $\varphi_k(\cE) \leq \varphi_k(\cQ)$ (resp. $\varphi_k(\cE)<\varphi_k(\cQ)$) holds for all quotient coherent sheaves $\cQ$ with $\rank \cQ>0$ (resp. $0<\rank \cQ<\rank \cE$) and sufficiently large $k$.
\item If $\cE$ is asymptotically $J$-stable, then it is simple.
\end{enumerate}
\end{prop}
\begin{proof}
(1) Assume that $\cE$ is asymptotically $J$-semistable. For any quotient coherent sheaf $\cQ$ with $0<\rank \cQ<\rank \cE$ and sufficiently large $k$, the inequality $\varphi_k(\cE) \leq \varphi_k(\cQ)$ follows immediately from \eqref{additive property in short exact sequence}. If $\rank \cQ=\rank \cE$, then we have $\cS=0$ since $\cE$ is torsion-free, and hence $\cE \simeq \cQ$, $\varphi_k(\cE)=\varphi_k(\cQ)$. Moreover, if $\cE$ is asymptotically $J$-stable, then for any $\cQ$ with $0<\rank \cQ<\rank \cE$, we have $0<\rank \cS<\rank \cE$, so the desired inequality $\varphi_k(\cE)<\varphi_k(\cQ)$ follows from the stability condition $\varphi_k(\cS)<\varphi_k(\cE)$ and \eqref{additive property in short exact sequence}.

\noindent
(2) If $\cE$ is asymptotically $J$-stable, then for any non-zero $u \in H^0(X,\End(\cE))$, we have $0<\rank(\Im u)<\rank(\cE)$ and a contradiction
\[
\varphi_k(\Im u)<\varphi_k(\cE)<\varphi_k(\cE/\Ker u)=\varphi_k(\Im u)
\]
unless $u$ is not an isomorphism. Now for any non-zero $f \in H^0(X,\End(\cE))$, we set $u:=f-\lambda \Id_\cE$, where $\lambda \in \C^\ast$ denotes any eigenvalue of the non-zero map $f_x \colon \cE_x \to \cE_x$ for some $x \in X$. Then $u$ can not be isomorphism since $u_x$ has non-trivial kernel. This yields that $f=\lambda \Id_\cE$ on $X$, and hence $\cE$ is simple.
\end{proof}
\begin{rk}
Much like the Mumford stability, one may expect that the condition $\varphi_k(\cE) \leq \varphi_k(\cQ)$ for all quotient coherent sheaves $\cQ$ with $\rank \cQ>0$ implies the asymptotic $J$-semistability of $\cE$. However, a problem occurs when considering coherent subsheaves $\cS \subset \cE$ with $\rank \cS=\rank \cE$. In this case, the additive property shows that
\[
\ch_i(\cE \otimes L^k)=\ch_i(\cS \otimes L^k)+\ch_i(\cQ \otimes L^k) \quad (i=1,2),
\]
where $\cQ:=\cE/\cS$ is a coherent torsion sheaf. For instance, if $\Supp \cQ$ has codimension $2$, then we have $\ch_1(\cQ \otimes L^k)=0$ and $\ch_2(\cQ \otimes L^k) \geq 0$ by \cite[Proposition 3.1]{SW15}, which implies the desired inequality $\varphi_k(\cS) \leq \varphi_k(\cE)$. However, it seems likely to the author that the inequality $\varphi_k(\cS)>\varphi_k(\cE)$ could happen in general.
\end{rk}

We recall that a {\it Jordan--H\"older filtration} of a holomorphic vector bundle $E$ is a filtration by coherent subsheaves
\[
0=:\cE_0 \subset \cE_1 \subset \ldots \subset \cE_\ell:=E
\]
such that the corresponding quotients $\cV_i:=\cE_i/\cE_{i-1}$ ($i=1,\ldots,\ell$) are Mumford stable with respect to $\eta$ and satisfy $\mu_\eta(\cV_i)=\mu_\eta(E)$. In particular, the associated graded object
\[
\Gr(E):=\bigoplus_{i=1}^\ell \cV_i
\]
is Mumford polystable in the sense that it is a direct sum of Mumford stable bundles with the same Mumford slope.
\begin{prop}[\cite{Kob14}, Theorem 5.7.18] \label{existence and uniqueness of the associated graded object}
Any Mumford semistable vector bundle $E$ over a compact K\"ahler manifold $X$ admits a Jordan--H\"older filtration. Such a filtration may not be unique, but the associated graded object $\Gr(E)$ is unique up to isomorphism.
\end{prop}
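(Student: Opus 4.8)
The statement is classical, and I would prove it by the standard induction on $\rank E$, as carried out in \cite[\S 5.7]{Kob14}; I sketch the argument. For \textbf{existence}: if $E$ is Mumford stable with respect to $\eta$ there is nothing to do, so assume not. Then the set of coherent subsheaves $\cS \subset E$ with $0 < \rank \cS < \rank E$ and $\mu_\eta(\cS) = \mu_\eta(E)$ is nonempty; I would choose $\cE_1$ in it of minimal rank and replace it by its saturation in $E$, which leaves the rank and slope unchanged and makes $E/\cE_1$ torsion-free. Semistability of $E$ forces every proper subsheaf of $\cE_1$ to have $\eta$-slope $\le \mu_\eta(E)$, and minimality of $\rank \cE_1$ rules out equality, so $\cE_1$ is Mumford stable; and a quotient of a semistable sheaf by a subsheaf of equal slope is semistable, so $E/\cE_1$ is Mumford semistable of the same slope. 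Applying the inductive hypothesis to $E/\cE_1$ and lifting the resulting filtration to $E$ yields the Jordan--H\"older filtration. Beyond elementary slope bookkeeping this uses only the existence of saturations and the torsion-freeness of quotients in the compact K\"ahler setting, both available in \cite{Kob14}.

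For \textbf{uniqueness} of $\Gr(E)$ up to isomorphism, the key input is a rigidity property in the spirit of the simplicity argument in Proposition \ref{see-saw property}: if $\cV, \cV'$ are Mumford stable with $\mu_\eta(\cV) = \mu_\eta(\cV')$, then any nonzero $f \colon \cV \to \cV'$ forces $\rank \cV = \rank \cV'$, is injective, and has cokernel supported in codimension $\ge 2$, because $\Im f$ is simultaneously a quotient of $\cV$, hence of slope $\ge \mu_\eta(\cV)$, and a subsheaf of $\cV'$, hence of slope $\le \mu_\eta(\cV')$, and stability then pins down all ranks. Given two Jordan--H\"older filtrations $0 = \cE_0 \subset \dots \subset \cE_\ell = E$ with quotients $\cV_i$ and $0 = \cE_0' \subset \dots \subset \cE_m' = E$ with quotients $\cV_j'$, let $i$ be minimal with $\cE_1 \subset \cE_i'$; the composite $\cE_1 \hookrightarrow \cE_i' \twoheadrightarrow \cV_i'$ is nonzero, so by rigidity $\cV_1 \cong \cV_i'$ (at the level of reflexive hulls). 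I would then pass to $\bar E := E/\cE_1$, which on one side carries the induced Jordan--H\"older filtration with quotients $\cV_2, \dots, \cV_\ell$, and on the other side inherits --- after saturating the images $(\cE_j' + \cE_1)/\cE_1$ --- a Jordan--H\"older filtration whose quotients are $\cV_1', \dots, \widehat{\cV_i'}, \dots, \cV_m'$, that is, the list of $\cV_j'$ with one copy of $\cV_i' \cong \cV_1$ removed. Induction on rank then gives that the multisets of stable quotients agree up to isomorphism, hence so do the associated graded objects.

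The step I expect to be the main obstacle is the descent in the uniqueness induction: verifying that $\{\cE_j'\}_{j<i}$ together with $\{(\cE_j' + \cE_1)/\cE_1\}_{j \ge i}$ (suitably saturated) really is a Jordan--H\"older filtration of $\bar E$ with successive quotients exactly the $\cV_j'$, $j \ne i$, each still Mumford stable of slope $\mu_\eta(E)$. This is a diagram chase complicated by the fact that over a general compact K\"ahler manifold one works with coherent rather than locally free sheaves, so several intermediate terms are only torsion-free and one must track saturations and codimension-two loci carefully. Since this is precisely the content of \cite[Theorem 5.7.18]{Kob14}, I would ultimately invoke it rather than reproduce the bookkeeping.
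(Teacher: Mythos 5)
Your proposal is fine: the paper offers no proof of this proposition at all, simply citing \cite[Theorem 5.7.18]{Kob14}, and your sketch is the standard induction-on-rank argument (minimal-rank equal-slope stable subsheaf for existence, Schur-type rigidity for uniqueness of the graded object) which likewise ends by deferring to the same reference. So you take essentially the same route as the paper, with a correct outline of the classical argument added on top.
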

We are interested in the following smoothness property introduced by Leung \cite{Leu97}:
\begin{dfn}
We say that a Mumford semistable bundle $E$ is {\it sufficiently smooth} if the associated graded object $\Gr(E)$ is locally free.
\end{dfn}
In what follows, we assume that $E$ is asymptotically $J$-stable. Then by Proposition \ref{Mumford stability vs asymptotic J-stability} and Proposition \ref{existence and uniqueness of the associated graded object}, we know that $E$ is Mumford semistable with respect to $\eta$, and $E$ admits a Jordan--H\"older filtration. We further assume that $E$ is sufficiently smooth and the associated graded object $\Gr(E)$ has at most $2$ Mumford stable components. Our strategy is essentially the same as \cite{ST20}. The basic ideas are to construct approximate solutions to arbitrary order, and then apply the quantitative implicit function theorem. Especially, we will focus on the construction of approximate solutions to order $2q$, in which the asymptotic $J$-stability condition is used (Lemma \ref{approximate solutions to order 2q}). From now on, the Jordan--H\"older filtration takes the simple form
\[
0 \subset V_1 \subset E
\]
and graded object is
\[
\Gr(E)=V_1 \oplus V_2.
\]
As in the proof of Theorem \ref{existence theorem in Mumford stable case}, for each component $V_i$ ($i=1,2$), we take a Hermitian metric $h_i \in \Herm(V_i)$ solving a weak Hermitian--Einstein equation
\[
\eta F_{h_i}=\frac{\mu_\eta(V_i)}{c_1(L)^2} \chi^2 \Id_{V_i}
\]
with the associated connection $D_{V_i}$. Then the product metric $h_0:=h_1 \oplus h_2 \in \Herm(E)$ also satisfies the weak Hermitian--Einstein equation
\[
\eta F_{h_0}=\frac{\mu_\eta(E)}{c_1(L)^2} \chi^2 \Id_E
\]
since $\mu_\eta(V_1)=\mu_\eta(V_2)=\mu_\eta(E)$. Then the associated connection of $h_0$ is given by the product connection $D_0:=D_{V_1} \oplus D_{V_2}$. We note that $E$ is isomorphic to $\Gr(E)$ as $C^\infty$ complex vector bundles, but their holomorphic structures are different. The holomorphic structure on $E$ is explicitly given by
\[
D''_E=D_0''+\g
\]
for some $\g \in \Omega^{0,1}(\Hom(V_2,V_1)) \subset \Omega^{0,1}(\End(\Gr(E)))$, where we note that $\g \neq 0$ since $E$ is simple. Then the integrability condition yields that
\[
D_0'' \g+\g \wedge \g=0.
\]
We will further impose the gauge fixing condition
\[
D_0''^\ast \g=0,
\]
and set
\[
a:=\g-\g^\ast.
\]
By a standard computation together with the Bochner--Kodaira--Nakano identity \cite[page 60]{Kob14}
\begin{equation} \label{BKN identity}
[\Lambda_\eta, D_0']=\i D_0''^\ast, \quad [\Lambda_\eta,D_0'']=-\i D_0'^\ast,
\end{equation}
we obtain:

\begin{lem}[a special case of \cite{ST20}, Lemma 5.5] \label{properties for a and g}
We have the following:
\begin{enumerate}
\item The term $D_0 a$ is off-diagonal.
\item The term $a \wedge a$ is diagonal with $\Tr(a \wedge a)=0$.
\item $-\i \int_X \Tr(\Lambda_\eta \g \wedge \g^\ast)>0$.
\item $\Lambda_\eta(D_0 a)=0$.
\end{enumerate}
\end{lem}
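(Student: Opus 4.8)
All four assertions are computations with respect to the $C^\infty$ splitting $\Gr(E)=V_1\oplus V_2$, so the plan is to work systematically in block form relative to it. Since $\g\in\Omega^{0,1}(\Hom(V_2,V_1))$, inside $\Omega^{0,1}(\End(\Gr(E)))$ its only nonzero block is the one sending $V_2$ to $V_1$; correspondingly $\g^\ast$ has its only nonzero block sending $V_1$ to $V_2$, so $a=\g-\g^\ast$ is off-diagonal. The product connection $D_0=D_{V_1}\oplus D_{V_2}$ preserves the splitting, hence sends off-diagonal $\End(\Gr(E))$-valued forms to off-diagonal forms; this is exactly (1), and it will be used repeatedly below.

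For (2) I would multiply out $a\wedge a$ block by block: the two off-diagonal blocks vanish since each summand carries a factor $0$, while the diagonal blocks are $-\g\wedge\g^\ast$ and $-\g^\ast\wedge\g$; thus $a\wedge a$ is diagonal. For the trace, $\Tr(a\wedge a)=-\Tr(\g\wedge\g^\ast)-\Tr(\g^\ast\wedge\g)$, and since $\g$ and $\g^\ast$ are both $1$-forms the graded cyclicity of the trace gives $\Tr(\g^\ast\wedge\g)=-\Tr(\g\wedge\g^\ast)$, so the two terms cancel.

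For (3) I would compute pointwise in an $h_0$-orthonormal frame of $\Gr(E)$ and in $\eta$-normal holomorphic coordinates, writing $\g=\sum_\alpha\g_\alpha\,dz^{\bar\alpha}$, so $\g^\ast=\sum_\alpha\g_\alpha^\ast\,dz^\alpha$. A direct computation gives $-\i\,\Tr\big(\Lambda_\eta(\g\wedge\g^\ast)\big)=\sum_\alpha\Tr(\g_\alpha\g_\alpha^\ast)=\abs{\g}^2\ge 0$ at every point, with equality at a point $x$ precisely when $\g_x=0$. The only non-formal input is that $\g\not\equiv 0$, which is where simplicity of $E$ enters: if $\g\equiv 0$ then $D''_E=D_0''$, so $E\cong\Gr(E)=V_1\oplus V_2$ as a holomorphic bundle, contradicting simplicity. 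Integrating the pointwise inequality over $X$ then gives the strict inequality in (3).

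Finally, for (4) I would decompose by bidegree: $D_0a=(D_0'\g+D_0''\g)-(D_0'\g^\ast+D_0''\g^\ast)$, where $D_0''\g$ is a $(0,2)$-form and $D_0'\g^\ast$ a $(2,0)$-form, both annihilated by $\Lambda_\eta$; hence $\Lambda_\eta(D_0a)=\Lambda_\eta(D_0'\g)-\Lambda_\eta(D_0''\g^\ast)$. Since $\Lambda_\eta\g=0$ and $\Lambda_\eta\g^\ast=0$ for degree reasons, the Bochner--Kodaira--Nakano identities \eqref{BKN identity} give $\Lambda_\eta(D_0'\g)=\i D_0''^\ast\g$ and $\Lambda_\eta(D_0''\g^\ast)=-\i D_0'^\ast\g^\ast$. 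The gauge-fixing condition $D_0''^\ast\g=0$ kills the first term, and its $h_0$-adjoint, combined with the standard identity $(D_0''^\ast\g)^\ast=D_0'^\ast\g^\ast$ for the unitary connection $D_0$, kills the second; thus $\Lambda_\eta(D_0a)=0$. I expect the only real friction to be the bookkeeping of signs coming from graded-commutativity of $\End$-valued forms and from the action of $\Lambda_\eta$, together with the small adjointness identity used in (4); conceptually the proof follows \cite[Lemma 5.5]{ST20} with no substantive changes, and the hypotheses do genuine work only through simplicity of $E$ in part (3).
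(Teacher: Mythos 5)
Your proposal is correct and follows exactly the route the paper intends: the paper gives no written proof but attributes the lemma to a ``standard computation together with the Bochner--Kodaira--Nakano identity'' (citing \cite[Lemma 5.5]{ST20}), and your block computations for (1)--(3), the use of simplicity to force $\g\neq 0$, and the combination of BKN with the gauge-fixing condition $D_0''^\ast\g=0$ (and its adjoint) for (4) are precisely that standard argument.
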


We will use the following straightforward consequence from the asymptotic $J$-stability:

\begin{prop}
The Mumford stable components $V_1$, $V_2$ are not isomorphic.
\end{prop}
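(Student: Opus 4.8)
The plan is to derive this directly from the simplicity of $E$. I would argue by contradiction: suppose there were an isomorphism $\phi \colon V_2 \xrightarrow{\sim} V_1$ of holomorphic vector bundles. Recall that in the two-component case the holomorphic structure on $E$ is $D''_E = D_0'' + \g$ with $\g \in \Omega^{0,1}(\Hom(V_2,V_1))$, so $V_1$ sits inside $E$ as a genuine holomorphic subbundle, with holomorphic inclusion $\iota \colon V_1 \hookrightarrow E$ and holomorphic quotient map $\pi \colon E \twoheadrightarrow E/V_1 \cong V_2$ fitting into $0 \to V_1 \xrightarrow{\iota} E \xrightarrow{\pi} V_2 \to 0$.

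The key step is then to consider the holomorphic endomorphism $f := \iota \circ \phi \circ \pi \in H^0(X, \End(E))$. Since $\pi$ is surjective, $\phi$ is an isomorphism and $\iota$ is injective, the image of $f$ is $\iota(V_1) = V_1 \neq 0$, so $f \neq 0$; on the other hand $\pi \circ \iota = 0$ by exactness, hence $f^2 = \iota \circ \phi \circ (\pi \circ \iota) \circ \phi \circ \pi = 0$. So $f$ is a nonzero nilpotent endomorphism of $E$, hence not a scalar multiple of $\Id_E$ (a nonzero multiple of the identity is invertible, not nilpotent), contradicting that $E$ is simple. Therefore $V_1 \not\cong V_2$.

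I do not expect any real obstacle; the only points needing a word are that $V_1$ really occurs as a holomorphic subbundle of $E$ (immediate from $D''_E = D_0'' + \g$) and that $0 \neq V_1 \subsetneq E$ (true because $0 \subset V_1 \subset E$ is a Jordan--H\"older filtration and $\g \neq 0$ since $E$ is simple). As an alternative matching the surrounding narrative, one can instead use the asymptotic $J$-stability together with the see-saw property: the subbundle $V_1$ gives $\varphi_k(V_1) < \varphi_k(E)$ for $k \gg 0$, Proposition \ref{see-saw property} applied to the quotient $V_2 = E/V_1$ gives $\varphi_k(E) \leq \varphi_k(V_2)$ for $k \gg 0$, and together $\varphi_k(V_1) < \varphi_k(V_2)$, which is incompatible with an isomorphism $V_1 \cong V_2$ (that would force $\varphi_k(V_1) = \varphi_k(V_2)$ for all $k$).
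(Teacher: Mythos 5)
Your primary argument is correct but takes a genuinely different route from the paper; your ``alternative'' is in fact the paper's proof. The paper argues exactly as in your second paragraph: it applies the asymptotic $J$-stability of $E$ to the subbundle $V_1$ and the see-saw property (Proposition \ref{see-saw property}) to obtain $\varphi_\varepsilon(V_1)<\varphi_\varepsilon(E)<\varphi_\varepsilon(E/V_1)=\varphi_\varepsilon(V_2)$ for sufficiently small $\varepsilon$, which is incompatible with $V_1\cong V_2$ since isomorphic bundles have equal Chern characters, hence equal slopes $\varphi_\varepsilon$. Your main argument instead uses only the standing simplicity assumption: since $\gamma\in\Omega^{0,1}(\Hom(V_2,V_1))$ annihilates sections of $V_1$, the bundle $V_1$ is a holomorphic subbundle of $(E,D_0''+\gamma)$ with holomorphic quotient $V_2$, and an isomorphism $\phi\colon V_2\to V_1$ would give the nonzero nilpotent holomorphic endomorphism $\iota\circ\phi\circ\pi$, contradicting $H^0(X,\End(E))=\C\,\Id_E$; this is a complete and correct argument. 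Both hypotheses (simplicity and asymptotic $J$-stability) are in force in this subsection, so both proofs are legitimate: yours is more elementary and shows the conclusion for any simple two-step extension, independently of any stability input, while the paper's version stays inside the slope formalism and records the strict inequality $\varphi_\varepsilon(V_2)-\varphi_\varepsilon(E)>0$, which is exactly the positivity reused later in the discrepancy analysis (cf. the sign of $\varrho_2$ in Lemma \ref{approximate solutions to order 2q}). One minor remark on your alternative: once $\varphi_k(V_1)<\varphi_k(E)$ is strict, the see-saw identity upgrades $\varphi_k(E)\leq\varphi_k(V_2)$ to a strict inequality, though the weak form already suffices for the non-isomorphism conclusion.
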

\begin{proof}
We apply the asymptotic $J$-stability condition for $V_1 \subset E$ and use the see-saw property (Proposition \ref{see-saw property}) to find that
\[
\varphi_\e(V_1)<\varphi_\e(E)<\varphi_\e(E/V_1)=\varphi_\e(V_2),
\]
for sufficiently small $\e>0$, which shows that $V_1$ is not isomorphic to $V_2$.
\end{proof}

Also, the following result only concerns the geometry of the Mumford polystable vector bundle $\Gr(E)$, and hence can be applied to our situation.
\begin{cor}[\cite{ST20}, Lemma 5.8]
We have $\Aut(\Gr(E))=\C^\ast \Id_{V_1} \times \C^\ast \Id_{V_2} \simeq (\C^\ast)^2$.
\end{cor}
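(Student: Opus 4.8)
The plan is to compute the space of holomorphic endomorphisms of $\Gr(E)$ and then identify $\Aut(\Gr(E))$ with its group of invertible elements. Writing $\Gr(E)=V_1\oplus V_2$, there is a block decomposition
\[
H^0(X,\End(\Gr(E)))=H^0(X,\End(V_1))\oplus\Hom(V_2,V_1)\oplus\Hom(V_1,V_2)\oplus H^0(X,\End(V_2)),
\]
so it suffices to prove (i) $H^0(X,\End(V_i))=\C\Id_{V_i}$ for $i=1,2$, and (ii) $\Hom(V_1,V_2)=\Hom(V_2,V_1)=0$. Both will follow from the Mumford stability of $V_1,V_2$ with respect to $\eta$ together with the fact, established in the preceding Proposition, that $V_1$ and $V_2$ are not isomorphic.

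For (i), I would use that a Mumford stable bundle is simple: given $f\in H^0(X,\End(V_i))$, pick an eigenvalue $\lambda$ of the fibrewise map $f_x$ at some $x\in X$; then $f-\lambda\Id_{V_i}$ has nontrivial kernel at $x$, hence is not an isomorphism, and comparing the $\eta$-slopes of its kernel and image with $\mu_\eta(V_i)$ forces $f-\lambda\Id_{V_i}=0$ --- the very argument already used in the proof of Proposition \ref{see-saw property}(2). For (ii), I would take a nonzero $f\colon V_1\to V_2$ with kernel $K$ and image $I$. If $K\neq0$, stability of $V_1$ gives $\mu_\eta(K)<\mu_\eta(V_1)$, hence $\mu_\eta(I)=\mu_\eta(V_1/K)>\mu_\eta(V_1)=\mu_\eta(V_2)$, contradicting $I\subseteq V_2$; so $f$ is injective, and in particular $\rank V_1\leq\rank V_2$. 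If $\rank V_1<\rank V_2$, then $I\subseteq V_2$ has rank $<\rank V_2$ and $\eta$-slope $\mu_\eta(V_2)$, contradicting strict stability of $V_2$. If $\rank V_1=\rank V_2$, then $\det f$ is a nonzero holomorphic section of $M:=\det V_2\otimes(\det V_1)^{-1}$, and $c_1(M)\cdot[\eta]=0$; on a compact K\"ahler surface a line bundle with this property can carry a nonzero section only if it is holomorphically trivial (a nonzero effective divisor pairs strictly positively with the K\"ahler class $[\eta]$), so $\det f$ is nowhere vanishing and $f$ is a bundle isomorphism, contradicting $V_1\not\cong V_2$. The same reasoning yields $\Hom(V_2,V_1)=0$. (Equivalently, the Mumford semistable sheaves of $\eta$-slope $\mu_\eta(E)$ form an abelian category in which $V_1,V_2$ are non-isomorphic simple objects, so (i)--(ii) is Schur's lemma.)

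Combining (i) and (ii), $H^0(X,\End(\Gr(E)))=\C\Id_{V_1}\oplus\C\Id_{V_2}$, which is a ring isomorphic to $\C\times\C$; hence its group of units --- which is precisely $\Aut(\Gr(E))$ --- equals $\C^\ast\Id_{V_1}\times\C^\ast\Id_{V_2}\cong(\C^\ast)^2$. I expect the main obstacle to be the vanishing of $\Hom$ between the two stable factors when they happen to have the same rank, where the Hodge-index-type positivity of $[\eta]$ on the surface is needed; the rest is routine Schur-type bookkeeping, and the essential external input is the non-isomorphism $V_1\not\cong V_2$ from the preceding Proposition.
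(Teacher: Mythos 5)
Your proof is correct and is essentially the standard Schur-lemma argument that the paper itself does not write out but delegates to the citation [ST20, Lemma 5.8], noting only that the statement concerns the polystable bundle $\Gr(E)=V_1\oplus V_2$ and using, exactly as you do, the non-isomorphism $V_1\not\cong V_2$ from the preceding proposition. Your block decomposition of $H^0(X,\End(\Gr(E)))$, the simplicity of the Mumford stable factors, and the vanishing of the off-diagonal $\Hom$'s (including the equal-rank case handled via the determinant section pairing against the K\"ahler class $[\eta]$) constitute the intended proof, so nothing needs to be added.
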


We recall that the local deformation space of holomorphic structures near $D_0''$ is given by an analytic space
\[
\cD:=\{\b \in \Omega^{0,1}(\End(\Gr(E)))| D_0'' \b+\b \wedge \b=0, \; D_0''^\ast \b=0 \}.
\]
Meanwhile, it is well-known that every holomorphic endomorphism is parallel with respect to the weak Hermitian--Einstein connection \cite[Theorem 3.1.38]{Kob14}, which together with \eqref{BKN identity} yields that the group $\Gr(E)$ also acts on the deformation space $\cD$ by
\[
\b \mapsto \kappa_1^{-1} \circ \b \circ \kappa_2, \quad (\kappa_1,\kappa_2) \in \Aut(\Gr(E)).
\]
In particular, if we set $\kappa:=(1,t) \in \Aut(\Gr(E))$ for $t \in \C^\ast$, then the induced connection is
\begin{equation} \label{connection Dt}
D_t:=D_E^\kappa=D_0+ta.
\end{equation}
This shows that the operator $D_t''$ can be extended across $t=0$ and gives a complex family of holomorphic structures which is isomorphic to $D_E''$ when $t \neq 0$ and $D_0''$ when $t=0$. Also we fix a real compact form $K \subset \Aut(\Gr(E))$ defined by
\[
K:=\Aut(V_1,h_1) \times \Aut(V_2,h_2)
\]
with the Lie algebra $\fk:=\Lie(K)$, where $\Aut(V_i,h_i)$ denotes the group of unitary automorphisms of $(V_i,h_i)$. Then we introduce the projection $\Pi_{\i \fk} \colon \i \Omega^0(\End(E,h_0)) \to \i \fk$ by
\[
s \mapsto \frac{1}{\rank(V_1)} \bigg( \int_X \Tr_{V_1}(s) \eta^2 \bigg) \Id_{V_1}+\frac{1}{\rank(V_2)} \bigg( \int_X \Tr_{V_2}(s) \eta^2 \bigg) \Id_{V_2}.
\]
Now we are ready to construct approximate solutions. In order to simplify notations in the approximate procedure, we introduce another variable
\[
\epsilon^2:=\e=\frac{1}{k},
\]
which will avoid the introduction of fractional powers of $\e$. Then we write the corresponding quantities as $\cP_\epsilon:=\cP_\e$, $\varphi_\epsilon(E):=\varphi_\e(E)$, etc.
\begin{dfn}
For a strict subbundle $S \subset E$, the {\it order of discrepancy} is the leading order of the expansion of $\varphi_\epsilon(E)-\varphi_\epsilon(S)$.
\end{dfn}

\begin{rk}
We recall the expansion formula for $\varphi_\epsilon(S)$
\[
\varphi_\epsilon(S)=\frac{c_1(L)^2}{2([\omega] \cdot c_1(L))} \epsilon^{-2}+\frac{\mu_\eta(S)}{2([\omega] \cdot c_1(L))^2}+O(\epsilon^2).
\]
Since the $\epsilon^{-2}$-term does not depend on $S$, we know that the order of discrepancy of $S \subset E$ is an even integer and at least zero.
\end{rk}

Let $2q$ be the order of discrepancy of $V_1 \subset E$. We note that $q \geq 1$ since $\mu_\eta(V_1)=\mu_\eta(E)$. Also, the see-saw property (Proposition \ref{see-saw property}) shows that $\varphi_\epsilon(V_2)-\varphi_\epsilon(E)>0$ whose leading terms is of order $\epsilon^{2q}$. First we apply the Mumford stable case to each component $V_i$ to know that there exists a diagonal Hermitian endomorphism $f_\epsilon=\Id_E+O(\epsilon)$ such that
\[
\cP_\epsilon(D_0^{f_\epsilon})=O(\epsilon^{2q}).
\]
Set $t:=\lambda \epsilon^q$, where the constant $\lambda>0$ is determined in the later argument. Let $D_t$ be the connection defined by \eqref{connection Dt}. If we write $f_\epsilon=(f_{1,\epsilon},f_{2,\epsilon})$ for some Hermitian endomorphisms $f_{i,\epsilon}=\Id_{V_i}+O(\epsilon)$ ($i=1,2$) of $(V_i,h_i)$, then a direct computation shows that
\begin{equation} \label{connection Dtfe}
D_t^{f_\epsilon}=D_0^{f_\epsilon}+ta_\epsilon,
\end{equation}
where
\[
a_\epsilon=f_{1,\epsilon}^{-1} \circ \g \circ f_{2,\epsilon}-f_{2,\epsilon} \circ \g^\ast \circ f_{1,\epsilon}^{-1}=a+O(\epsilon)
\]
is off-diagonal. So the curvature of $D_t^{f_\epsilon}$ is given by
\begin{equation} \label{curvature FDtf}
F_{D_t^{f_\epsilon}}=F_{D_0^{f_\epsilon}}+\frac{\i}{2 \pi} \lambda \epsilon^q D_0^{f_\epsilon} a_\epsilon+\frac{\i}{2 \pi} \lambda^2 \epsilon^{2q} a_\epsilon \wedge a_\epsilon,
\end{equation}
where $D_0^{f_\epsilon} a_\epsilon$ is off-diagonal while $F_{D_0^{f_\epsilon}}$ and $a_\epsilon \wedge a_\epsilon$ are diagonal. We consider the effect of replacing $D_0$ with $D_t$ in the above equation.

\begin{lem}[see also \cite{ST20}, Lemma 5.13] \label{approximate solutions to order 2q-1}
There exist off-diagonal Hermitian endomorphisms $s_{q+1}, \ldots s_{2q-1}$ (depending on $\lambda$) such that if we set
\begin{equation} \label{g 2q-1 epsilon}
g_{2q-1,\epsilon}:=\exp(s_{q+1} \epsilon^{q+1}) \circ \cdots \circ \exp(s_{2q-1} \epsilon^{2q-1}),
\end{equation}
then we have
\[
\cP_\epsilon(D_t^{f_\epsilon \circ g_{2q-1,\epsilon}})=O(\epsilon^{2q}).
\]
\end{lem}
\begin{proof}
By \eqref{curvature FDtf}, we observe that
\[
\cP_\epsilon(D_t^{f_\epsilon})=\cP_\epsilon(D_0^{f_\epsilon})+\epsilon^q \sum_{r=1}^{q-1} \s_r \epsilon^r+O(\epsilon^{2q})
\]
for some off-diagonal Hermitian endomorphisms $\s_1,\ldots,\s_{q-1}$, where the crucial point is that the $\epsilon^q$-term does not appear in the above expansion. Indeed, by Lemma \ref{P is a deformation of P0} and \eqref{curvature FDtf}, the potential $O(\epsilon^q)$-contribution only comes from the leading term of the expansion $\Lambda_\eta D_0^{f_\epsilon} a_\epsilon=\Lambda_\eta D_0 a+O(\epsilon)$, which is zero by Lemma \ref{properties for a and g}. From the linearization, we have
\[
\begin{aligned}
\cP_\epsilon(D_t^{f_\epsilon \circ \exp(s_{q+1} \epsilon^{q+1})})&=\cP_\epsilon(D_t^{f_\epsilon})+(\d|_{D_t^{f_\epsilon},s_{q+1}} \cP_\epsilon) \epsilon^{q+1}+O(\epsilon^{q+2}) \\
&=\epsilon^q \sum_{r=1}^{q-1} \epsilon^r \s_r+(\d|_{D_0,s_{q+1}} \cP_0) \epsilon^{q+1}+O(\epsilon^{q+2}).
\end{aligned}
\]
Thus if we take an off-diagonal Hermitian endomorphism $s_{q+1}$ as a solution to
\[
\d|_{D_0,s_{q+1}} \cP_0=-\s_1,
\]
then we have
\[
\cP_\epsilon(D_t^{f_\epsilon \circ \exp(s_{q+1} \epsilon^{q+1})})=\epsilon^q \sum_{r=2}^{q-1} \epsilon^r \tilde{\s}_r+O(\epsilon^{2q})
\]
for some Hermitian endomorphisms $\tilde{\s}_2,\ldots,\tilde{\s}_{q-1}$. From \eqref{change of the curvature}, \eqref{connection Dtfe} and the fact that $s_{q+1}$ is off-diagonal, one can easily check that $\tilde{\s}_2,\ldots,\tilde{\s}_{q-1}$ are off-diagonal. Therefore we can repeat this procedure to kill the all terms up to order $\epsilon^{2q-1}$.
\end{proof}
We set
\[
I_\dag:=\frac{1}{\rank(V_1)} \Id_{V_1}-\frac{1}{\rank(V_2)} \Id_{V_2},
\]
which is in the kernel of the linearized operator $\d|_{D_0} \cP_0$ but orthogonal to $\Id_E$. We recall that the induced connection of $D_t$ on $\End(E)$ is given by
\begin{equation} \label{the connection Dt acting on endomorphisms}
D_t=D_0+t[a,\cdot].
\end{equation}
Thus for any Hermitian endomorphism $s$, we have
\begin{equation} \label{the linearization of curvature of Dt}
\begin{aligned}
D_t' D_t''s-D_t'' D_t's&=D_0' D_0''s-D_0'' D_0's+t\big( D_0'' [\g^\ast,s]-[\g^\ast,D_0''s]+D_0'[\g,s]-[\g,D_0's] \big) \\
&+t^2 \big( [\g, [\g^\ast,s]]-[\g^\ast,[\g,s]] \big).
\end{aligned}
\end{equation}
By using this formula, we can show the following lemma in the same way as \cite[Proposition 5.14]{ST20}:

\begin{lem} \label{Laplacian and Idpm}
For any Hermitian endomorphism $s$, we have
\begin{equation} \label{expansion of Laplacian}
\Delta_{\eta,D_t}s=\Delta_{\eta,D_0}s-2\lambda \epsilon^q D_0^\ast [a,s]+O(\epsilon^{q+1}).
\end{equation}
Also, we have an estimate
\[
\Delta_{\eta,D_t}(I_\dag)=-2 \lambda^2 \epsilon^{2q} \i \bigg( \frac{1}{\rank(V_1)}+\frac{1}{\rank(V_2)} \bigg) \Lambda_\eta(a \wedge a)+O(\epsilon^{2q+1})
\]
and its projection to $\i \fk$
\begin{equation} \label{expansion of pr Laplacian I}
\Pi_{\i \fk} \Delta_{\eta,D_t}(I_\dag)=-\varrho \lambda^2 \epsilon^{2q} I_\dag+O(\epsilon^{2q+1}),
\end{equation}
where $\varrho:=-2 \i \big( \frac{1}{\rank(V_1)}+\frac{1}{\rank(V_2)} \big) \int_X \Tr_{V_1} (\Lambda_\eta \g \wedge \g^\ast)>0$. The expansion \eqref{expansion of pr Laplacian I} also holds for a perturbed connection $D_t^{g_\epsilon}$ for any $g_\epsilon \in \cG=\Omega^0(\Uni(E,h_0))$ such that $g_\epsilon=\Id_E+O(\epsilon)$ and $g_\epsilon$ is diagonal up to an error of order $\epsilon^{q+1}$. Moreover, the expansion \eqref{expansion of Laplacian} still holds for such a connection $D_t^{g_\epsilon}$, except that one has to replace $\Delta_{\eta,D_0}$ with $\Delta_{\eta,D_0^{g_\epsilon}}$.
\end{lem}

We briefly explain the last assertion in the above lemma. For diagonal $g_\epsilon$, we compute $D_t^{g_\epsilon}=D_0^{g_\epsilon}+t[\tilde{a}_\epsilon,\cdot]$ where $\tilde{a}_\epsilon=a+O(\epsilon)$ is off-diagonal. By using this, one can prove \eqref{expansion of Laplacian} just as in the case $g_\epsilon=\Id_E$, except that one has to replace $\Delta_{\eta,D_0}$ with $\Delta_{\eta,D_0^{g_\epsilon}}$. For general $g_\epsilon$, the contribution of the off-diagonal component of $g_\epsilon$ occurs at least order $\epsilon^{q+1}$ from the assumption, so the same formula still holds. As for $I_\dag$, we note that $D I_\dag=0$ for any product connection $D$. Also the connection $D_t$ is diagonal up to order $\epsilon^q$ by \eqref{the connection Dt acting on endomorphisms}. Keeping these facts in mind and using the formula \eqref{action of the gauge group}, one can observe that the diagonal contribution of $g_\epsilon$ to $\Delta_{\eta,D_t^{g_\epsilon}}(I_\dag)-\Delta_{\eta,D_t}(I_\dag)$ occurs at least order $\epsilon^{2q+1}$. Thus the formula \eqref{expansion of pr Laplacian I} holds for $D_t^{g_\epsilon}$.

Now we will eliminate the $\epsilon^{2q}$-term.

\begin{lem} \label{approximate solutions to order 2q}
There exist a constant $\lambda>0$ and a Hermitian endomorphism $s_{2q}$ such that if we set $t=\lambda \epsilon^q$ and $g_{2q,\epsilon}:=g_{2q-1,\epsilon} \circ \exp(s_{2q} \epsilon^{2q})$, then we have
\[
\cP_\epsilon(D_t^{f_\epsilon \circ g_{2q,\epsilon}})=O(\epsilon^{2q+1}),
\]
where the endomorphism $g_{2q-1,\epsilon}$ is constructed in Lemma \ref{approximate solutions to order 2q-1}.
\end{lem}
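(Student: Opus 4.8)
The plan is to annihilate the remaining $\epsilon^{2q}$-term by one further gauge transformation, after using the freedom in $\lambda$ to push the leading obstruction into the image of the Laplacian $\Delta_{\eta,D_0}$. First, name the leading obstruction: by Lemma~\ref{approximate solutions to order 2q-1} we may write
\[
\cP_\epsilon\big((g_{2q-1,\epsilon}\circ f_\epsilon)\cdot D_t\big)=\Psi_{2q}\,\epsilon^{2q}+O(\epsilon^{2q+1})
\]
for a well-defined Hermitian endomorphism $\Psi_{2q}$, which depends on $\lambda$ only through $D_t$ and through the off-diagonal corrections $s_{q+1},\ldots,s_{2q-1}$. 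Since $\cP_\epsilon=\cP_0+O(\epsilon)$ and $(g_{2q-1,\epsilon}\circ f_\epsilon)\cdot D_t=D_0+O(\epsilon)$ (recall $q\geq1$), differentiating $\cP_\epsilon$ along the gauge orbit in the direction $s_{2q}$ and invoking Lemma~\ref{P is a deformation of P0} gives
\[
\cP_\epsilon\big((g_{2q,\epsilon}\circ f_\epsilon)\cdot D_t\big)=\Big(\Psi_{2q}+\frac{1}{4\pi c_1(L)^2}\Delta_{\eta,D_0}s_{2q}\Big)\epsilon^{2q}+O(\epsilon^{2q+1}).
\]
Thus it suffices to solve $\Delta_{\eta,D_0}s_{2q}=-4\pi c_1(L)^2\,\Psi_{2q}$ for a Hermitian endomorphism $s_{2q}$.

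As $\Delta_{\eta,D_0}$ is self-adjoint elliptic, this equation is solvable exactly when $\Psi_{2q}$ is $L^2$-orthogonal to $\ker\Delta_{\eta,D_0}$. Every holomorphic endomorphism of the Mumford polystable bundle $\Gr(E)=V_1\oplus V_2$ is parallel, and $V_1\not\simeq V_2$, so the Hermitian part of $\ker\Delta_{\eta,D_0}$ equals $\R\Id_{V_1}\oplus\R\Id_{V_2}=\R\Id_E\oplus\R\Id_\pm$. Orthogonality to $\Id_E$ is automatic: $\int_X\Tr\cP_\epsilon(D)$ is a cohomological constant which vanishes by the very definition of $\varphi_\epsilon(E)$ (equivalently of $c_\epsilon$), so $\int_X\Tr\Psi_{2q}\,\eta^2=0$. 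The single remaining condition is
\[
\langle\Psi_{2q},\Id_\pm\rangle_{L^2}=0,
\]
and it is here that $\lambda$ is spent.

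The key computation is to express $\langle\Psi_{2q},\Id_\pm\rangle_{L^2}$ as a non-degenerate affine function of $\lambda^2$. I would split $\Psi_{2q}$ into: (i) the part coming from the fact that $f_\epsilon$ solves the two block equations with the slopes $\varphi_\epsilon(V_i)$ rather than the global slope $\varphi_\epsilon(E)$; feeding in the expansions of $\varphi_\epsilon(V_i)-\varphi_\epsilon(E)$ (whose leading terms are of order exactly $\epsilon^{2q}$, since $2q$ is the order of discrepancy of $V_1\subset E$) together with the see-saw property (Proposition~\ref{see-saw property}), this yields a fixed nonzero constant whose sign is dictated by the asymptotic $J$-stability chain $\varphi_\epsilon(V_1)<\varphi_\epsilon(E)<\varphi_\epsilon(V_2)$; (ii) the part coming from the diagonal curvature term $\frac{\i}{2\pi}\lambda^2\epsilon^{2q}a_\epsilon\wedge a_\epsilon$ in the curvature of $f_\epsilon\cdot D_t$, which, passed through the linear-in-$F$ part of $\cP_0$, produces a diagonal trace-free contribution proportional to $\lambda^2\Lambda_\eta(a\wedge a)$ whose $\Id_\pm$-pairing is a positive multiple of $-\i\int_X\Tr_{V_1}(\Lambda_\eta\g\wedge\g^\ast)$, positive by Lemma~\ref{properties for a and g}(3), equivalently by the positivity of $\varrho$ in Lemma~\ref{Laplacian and Idpm}; (iii) all remaining contributions, which are either off-diagonal (hence $L^2$-orthogonal to $\Id_\pm$) or of order $O(\epsilon^{2q+1})$. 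Collecting, $\langle\Psi_{2q},\Id_\pm\rangle_{L^2}=-\mu+\kappa\lambda^2+O(\epsilon)$ with $\mu,\kappa$ nonzero, independent of $\lambda$, and --- crucially --- of the same sign, so that $\lambda:=\sqrt{\mu/\kappa}>0$ kills the obstruction. With this $\lambda$ one solves the Laplace equation above for $s_{2q}$, normalized $L^2$-orthogonally to $\ker\Delta_{\eta,D_0}$ (the off-diagonal residue being absorbed simultaneously), and obtains $\cP_\epsilon((g_{2q,\epsilon}\circ f_\epsilon)\cdot D_t)=O(\epsilon^{2q+1})$.

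The main obstacle is the sign-matching: one must verify that the stability obstruction along $\Id_\pm$ (item (i)) and the obstruction created by the $\epsilon^q$-scale deformation $D_t$ (item (ii)) have the same sign, so that $\mu/\kappa>0$ and a genuine real $\lambda>0$ exists. This is the one place where asymptotic $J$-stability, rather than mere Mumford semistability, is used, and it runs parallel to the corresponding step in \cite{ST20}. The rest --- tracking which terms are diagonal versus off-diagonal, and which live at order $\epsilon^{2q}$ versus $\epsilon^{2q+1}$ --- is routine bookkeeping but must be carried out with care so that no stray $\Id_\pm$-contribution is missed.
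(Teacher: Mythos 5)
Your proposal is correct and follows essentially the same route as the paper: you split the order-$\epsilon^{2q}$ obstruction into its $\i\fk$-component and its orthogonal complement, note that the $\Id_E$-direction vanishes for cohomological reasons (the paper's $\varrho_1+\varrho_2=0$), kill the $\Id_\pm$-direction by solving $-\mu+\kappa\lambda^2=0$ where $\kappa>0$ comes from $\Lambda_\eta(a\wedge a)$ and $\mu>0$ from asymptotic $J$-stability via the see-saw property and the order of discrepancy, and then absorb the remaining part by solving the linearized (Laplace-type) equation for $s_{2q}$. The bookkeeping you defer — that the $\lambda$-dependent corrections $s_{q+1},\ldots,s_{2q-1}$ contribute to the $\i\fk$-projection only at higher order, since they are off-diagonal and enter through $\cP_0$ at a diagonal connection — is exactly the verification the paper carries out, so no step of your argument would fail.
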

\begin{proof}
We decompose the $\epsilon^{2q}$-term of $\cP_\epsilon(D_t^{f_\epsilon \circ g_{2q-1,\epsilon}})$ as
\[
\cP_\epsilon(D_t^{f_\epsilon \circ g_{2q-1,\epsilon}})=(\s_{\i \fk}+\s_\perp) \epsilon^{2q}+O(\epsilon^{2q+1}),
\]
where $\s_{\i \fk} \in \i \fk$ and $\s_\perp$ is orthogonal to $\i \fk$. Let $[\cdot]_{2q}$ denotes the coefficient of $\epsilon^{2q}$ in the expansion. First, by \eqref{g 2q-1 epsilon} the off-diagonal (resp. diagonal) component of $g_{2q-1,\epsilon}-\Id_E$ is $O(\epsilon^{q+1})$ (resp. $O(\epsilon^{2q+2})$). Thus substituting $g=g_{2q-1,\epsilon}$, $D=D_t^{f_\epsilon \circ g_{2q-1,\epsilon}}$ and \eqref{connection Dtfe} in the formula \eqref{change of the curvature}, we find that the off-diagonal (resp. diagonal) component of $F_{D_t^{f_\epsilon \circ g_{2q-1}}}-F_{D_t^{f_\epsilon}}$ is $O(\epsilon^{q+1})$ (resp. $O(\epsilon^{2q+1})$). By using this fact, \eqref{curvature FDtf} and setting $D=D_t^{f_\epsilon \circ g_{2q-1,\epsilon}}$ in \eqref{expansion of the linearized operator}, we observe that the diagonal contribution of $g_{2q-1,\epsilon}$ to $\cP_\epsilon(D_t^{f_\epsilon \circ g_{2q-1,\epsilon}})-\cP_\epsilon(D_t^{f_\epsilon})$ occurs at least order $\epsilon^{2q+1}$. While the off-diagonal contribution appears in lower order terms, which does not affect the projection onto $\i \fk$. Thus we have
\[
\s_{\i \fk}=\Pi_{\i \fk} \big[ \cP_\epsilon(D_t^{f_\epsilon \circ g_{2q-1,\epsilon}}) \big]_{2q}=\Pi_{\i \fk} \big[ \cP_\epsilon(D_t^{f_\epsilon}) \big]_{2q}.
\]
Next, by substituting $D=D_t^{f_\epsilon}$ and \eqref{curvature FDtf} in \eqref{expansion of the linearized operator}, we know that the diagonal contribution of $t=\lambda \epsilon^q$ to $\cP_\epsilon(D_t^{f_\epsilon})-\cP_\epsilon(D_0^{f_\epsilon})$ occurs at least order $\epsilon^{2q}$, which comes from the trace of the third term in the right hand side of \eqref{curvature FDtf}, \ie
\[
\Pi_{\i \fk} \big[ \cP_\epsilon(D_t^{f_\epsilon}) \big]_{2q}=\Pi_{\i \fk} \big[ \cP_\epsilon(D_0^{f_\epsilon}) \big]_{2q}+\frac{\i}{4 \pi c_1(L)^2} \lambda^2 \Pi_{\i \fk} \Lambda_\eta(a \wedge a).
\]
Set
\[
\Pi_{\i \fk} \big[ \cP_\epsilon(D_0^{f_\epsilon}) \big]_{2q}=\varrho_1 \frac{\Id_{V_1}}{\rank(V_1)}+\varrho_2 \frac{\Id_{V_2}}{\rank(V_2)}, \quad \varrho_1, \varrho_2 \in \R.
\]
Then by using the fact that $f_\epsilon$ is diagonal, the coefficient $\varrho_i$ $(i=1,2$) is given by
\[
\begin{aligned}
\varrho_i&=\int_X \Tr_{V_i} \big[ c_\epsilon (F_{D_{V_i}}+\epsilon^{-2} \chi \Id_{V_i})^2-\omega \Id_{V_i} \wedge (F_{D_{V_i}}+\epsilon^{-2} \chi \Id_{V_i}) \big]_{2q} \\
&=\big[ 2c_\epsilon \ch_2(V_i \otimes L^k)-[\omega] \cdot \ch_1(V_i \otimes L^k) \big]_{2q},
\end{aligned}
\]
where we regard $[\omega] \cdot \ch_1(V_i \otimes L^k)$ and $\ch_2(V_i \otimes L^k)$ as functions of $\epsilon$ via the expansion. Also, we compute
\[
\frac{\i}{4 \pi c_1(L)^2} \lambda^2 \Pi_{\i \fk} \Lambda_\eta(a \wedge a)=C \lambda^2 I_\dag
\]
with $C:=-\frac{\i}{4 \pi c_1(L)^2} \int_X \Tr_{V_1} \Lambda_\eta(\g \wedge \g^\ast)=\frac{\i}{4 \pi c_1(L)^2} \int_X \Tr_{V_2} \Lambda_\eta(\g^\ast \wedge \g)>0$. Thus we have $\s_{\i \fk}=0$ if and only if
\[
\begin{cases}
\varrho_1+C\lambda^2=0 \\
\varrho_2-C\lambda^2=0.
\end{cases}
\]
We can find such a $\lambda>0$ if and only if
\[
\varrho_1+\varrho_2=0, \quad \varrho_2>0.
\]
Indeed, the first assertion follows from the additive property of Chern characters and the definition of $c_\epsilon$. As for the second assertion, we observe that
\[
2 c_\epsilon \ch_2 (V_2 \otimes L^k)-[\omega] \cdot \ch_1(V_2 \otimes L^k)=\frac{[\omega] \cdot \ch_1(V_2 \otimes L^k)}{\varphi_\epsilon(E)}(\varphi_\epsilon(V_2)-\varphi_\epsilon(E)).
\]
On the other hand, since the order of discrepancy of $V_1 \subset E$ is $2q$, the above formula together with the see-saw property implies that
\[
\varrho_2=\frac{2 \rank(V_2)([\omega] \cdot c_1(L))^2}{c_1(L)^2}[\varphi_\epsilon(V_2)-\varphi_\epsilon(E)]_{2q}>0.
\]
Hence there exists $\lambda>0$ such that
\[
\cP_\epsilon(D_t^{f_\epsilon \circ g_{2q-1,\epsilon}})=\s_\perp \epsilon^{2q}+O(\epsilon^{2q+1}).
\]
If we define a Hermitian endomorphism $s_{2q}$ as a solution to
\[
\d|_{D_0, s_{2q}} \cP_0=-\s_\perp,
\]
then we can see that
\[
\cP_\epsilon(D_t^{f_\epsilon \circ g_{2q,\epsilon}})=O(\epsilon^{2q+1})
\]
from the linearization.
\end{proof}

In what follows, we fix the constant $\lambda$ determined in Lemma \ref{approximate solutions to order 2q}. As for approximate solutions to higher order, we can deal with any error orthogonal to $\Id_E$ via linearization by using Lemma \ref{Laplacian and Idpm}.

\begin{lem} \label{approximate solutions to higher order}
Let $\lambda>0$, $g_{2q,\epsilon}$ be as in Lemma \ref{approximate solutions to order 2q}. For any integer $N \geq 2q$, there exist real constants $\lambda_{2q+1},\ldots,\lambda_N$ and Hermitian endomorphisms $s_{m,m-q},\ldots,s_{m,m}$ ($m=2q+1,\ldots,N$) such that if we set
\[
\phi_{m,\epsilon}:=\exp(\lambda_m I_\dag \epsilon^{m-2q}) \circ \exp(s_{m,m-q} \epsilon^{m-q}) \circ \cdots \circ \exp(s_{m,m} \epsilon^m), \quad m=2q+1,\ldots,N
\]
and
\[
g_{N,\epsilon}:=g_{2q,\epsilon} \circ \phi_{2q+1,\epsilon} \cdots \circ \phi_{N,\epsilon},
\]
then we have
\[
\cP_\epsilon(D_t^{f_\epsilon \circ g_{N,\epsilon}})=O(\epsilon^{N+1}).
\]
\end{lem}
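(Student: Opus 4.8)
The plan is to prove the statement by induction on $N$, the base case $N=2q$ being Lemma~\ref{approximate solutions to order 2q}; throughout, $\lambda>0$ denotes the constant fixed there and $t=\lambda\epsilon^q$, so $D_t$ is the connection \eqref{connection Dt}. Before running the induction I would isolate the two facts that govern the obstructions. First, for \emph{every} connection $D$ on $E$ and every $\epsilon$,
\[
\int_X\Tr\cP_\epsilon(D)=2c_\epsilon\ch_2(E\otimes L^k)-[\omega]\cdot\ch_1(E\otimes L^k)=0
\]
by the definition of $c_\epsilon$ in \eqref{c and varphi}, so the $\Id_E$-component of every coefficient in the $\epsilon$-expansion of $\cP_\epsilon(D)$ vanishes identically. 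Second, by Lemma~\ref{P is a deformation of P0} the linearization of $\cP_\epsilon$ is $\frac{1}{4\pi c_1(L)^2}\Delta_{\eta,D_0}$ up to $O(\epsilon)$, and $\Ker\Delta_{\eta,D_0}$ on Hermitian endomorphisms is the space of $D_0$-parallel ones, which equals $\R\,\Id_{V_1}\oplus\R\,\Id_{V_2}=\R\,\Id_E\oplus\R\,\Id_\pm$ since $V_1\not\cong V_2$.

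Now assume $N\geq 2q+1$ and that $g_{N-1,\epsilon}$ (with $g_{2q,\epsilon}$ as in Lemma~\ref{approximate solutions to order 2q}) has been constructed with $\cP_\epsilon((g_{N-1,\epsilon}\circ f_\epsilon)\cdot D_t)=O(\epsilon^N)$. Let $\s_N$ be the coefficient of $\epsilon^N$, decomposed as $\s_N=\s_N^{\i\fk}+\s_N^\perp$ with $\s_N^{\i\fk}\in\i\fk$ and $\s_N^\perp$ orthogonal to $\i\fk$; by the first fact $\s_N^{\i\fk}$ has no $\Id_E$-part, hence $\s_N^{\i\fk}=\nu_N\,\Id_\pm$ for some $\nu_N\in\R$. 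To cancel this $\Id_\pm$-obstruction I would insert the gauge transformation $\exp(\lambda_N\,\Id_\pm\,\epsilon^{N-2q})$, choosing $\lambda_N\in\R$ (uniquely, using $\varrho>0$) so that, by Lemma~\ref{Laplacian and Idpm}, its contribution $-\frac{\varrho}{4\pi c_1(L)^2}\lambda^2\lambda_N\,\Id_\pm$ to the coefficient of $\epsilon^N$ cancels $\s_N^{\i\fk}$. Because $\Id_\pm$ is $D_0$-parallel, this insertion perturbs the connection only at size $O(\epsilon^{N-2q}\cdot t)=O(\epsilon^{N-q})$; routed through the $O(\epsilon)$-deviation $\cP_\epsilon-\cP_0$ (which does not benefit from the cancellation $\Lambda_\eta(D_0 a)=0$ of Lemma~\ref{properties for a and g}), it nevertheless re-introduces errors at the orders $\epsilon^{N-q},\dots,\epsilon^{N}$. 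These disturbances, together with the residual $\s_N^\perp$ at order $N$, are orthogonal to $\Ker\Delta_{\eta,D_0}$ — no $\Id_E$-part by the cohomological normalization, and no $\Id_\pm$-part since Lemma~\ref{Laplacian and Idpm} puts the entire $\Id_\pm$-content of the insertion at order $\epsilon^N$ where it was used up — and here Lemma~\ref{properties for a and g} and the $L^2$-orthogonality of the image of $D_0^\ast$ to the parallel endomorphisms are used to confirm it. I would then clear them one order at a time: for $j=N-q,\dots,N$ in turn, let $s_{N,j}$ solve $\d|_{D_0,s_{N,j}}\cP_0=-(\text{current coefficient of }\epsilon^j)$, which is solvable because $\d|_{D_0}\cP_0=\frac{1}{4\pi c_1(L)^2}\Delta_{\eta,D_0}$ is elliptic and self-adjoint and the right side lies in the orthogonal complement of its kernel, and insert $\exp(s_{N,j}\epsilon^j)$, whose own nonlinear contributions sit at order $\geq 2(N-q)\geq N+1$. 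Setting $\phi_{N,\epsilon}:=\exp(s_{N,N}\epsilon^N)\circ\cdots\circ\exp(s_{N,N-q}\epsilon^{N-q})\circ\exp(\lambda_N\,\Id_\pm\,\epsilon^{N-2q})$ and $g_{N,\epsilon}:=\phi_{N,\epsilon}\circ g_{N-1,\epsilon}$ then gives $\cP_\epsilon((g_{N,\epsilon}\circ f_\epsilon)\cdot D_t)=O(\epsilon^{N+1})$, closing the induction. This is the argument of \cite[Proposition 5.16]{ST20} transcribed to the present setting.

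I expect the real work to be the structural book-keeping underpinning this scheme, namely verifying: (i) that at each step the obstruction inside $\Ker\Delta_{\eta,D_0}$ is a multiple of $\Id_\pm$ alone — the $\Id_E$-direction being eliminated automatically by the cohomological normalization — so that the single parameter $\lambda_N$, with the sign $\varrho>0$ of Lemma~\ref{Laplacian and Idpm}, always suffices; and (ii) that the errors produced by inserting $\exp(\lambda_N\,\Id_\pm\,\epsilon^{N-2q})$ are orthogonal to $\Ker\Delta_{\eta,D_0}$ and confined to the $q+1$ orders $\epsilon^{N-q},\dots,\epsilon^{N}$, so that the corrections $s_{N,N-q},\dots,s_{N,N}$ indeed dispose of them. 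Both reduce to tracking the diagonal/off-diagonal type and the images of $D_0$ and $D_0^\ast$ of each term appearing in the expansions of the curvature \eqref{curvature of FfDt}, the linearized curvature \eqref{the linearization of curvature of Dt}, and $\cP_\epsilon$, and then invoking Lemmas~\ref{P is a deformation of P0}, \ref{properties for a and g} and \ref{Laplacian and Idpm}; there is no analytic input beyond solvability of the Laplace equation and the positivity $\varrho>0$.
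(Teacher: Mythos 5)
Your proposal is correct and follows essentially the same route as the paper, which itself proves this lemma only by reference to \cite[Proposition 5.16]{ST20}: an induction on $N$ in which the $\Id_E$-direction of the obstruction is killed by the cohomological normalization of $c_\epsilon$, the $\Id_\pm$-direction is cancelled by the insertion $\exp(\lambda_m \Id_\pm \epsilon^{m-2q})$ via Lemma \ref{Laplacian and Idpm} and $\varrho>0$, and the remaining off-diagonal errors at orders $\epsilon^{m-q},\ldots,\epsilon^m$ are removed by solving the linearized (Laplace) equations. The order-by-order book-keeping you defer to (diagonal/off-diagonal types, $\Lambda_\eta(D_0a)=0$, and the near-orthogonality of the image of the approximate Laplacian to $\i\fk$) is exactly the content supplied by the cited argument of \cite{ST20}.
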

\begin{proof}
We can prove the lemma almost in the same way as \cite[Proposition 5.16]{ST20}, but we show a sketch of the proof for reader's convenience. By Lemma \ref{approximate solutions to order 2q}, we decompose
\[
\cP_\epsilon(D_t^{f_\epsilon \circ g_{2q,\epsilon}})=(\kappa I_\dag+\s_\perp) \epsilon^{2q+1}+O(\epsilon^{2q+2}),
\]
where $\kappa \in \R$ and $\s_\perp$ is orthogonal to $\i \fk$. Since $D_t^{f_\epsilon \circ g_{2q,\epsilon}}$ is diagonal up to an error of order $\epsilon^q$, the formula \eqref{action of the gauge group} shows that for $\lambda_{2q+1} \in \R$ the off-diagonal contribution of $\exp(\lambda_{2q+1} I_\dag \epsilon)$ to $F_{D_t^{f_\epsilon \circ g_{2q,\epsilon} \circ \exp(\lambda_{2q+1} I_\dag \epsilon)}}-F_{D_t^{f_\epsilon \circ g_{2q,\epsilon}}}$ occurs at least order $\epsilon \cdot \epsilon^q=\epsilon^{q+1}$, whereas the diagonal contribution occurs at least order $\epsilon \cdot \epsilon^q \cdot \epsilon^q=\epsilon^{2q+1}$, which comes from the linearization $\d|_{D_t^{f_\epsilon \circ g_{2q,\epsilon}}, \lambda_{2q+1} I_\dag \epsilon} \cP_0$. So by Lemma \ref{Laplacian and Idpm}, we can find $\lambda_{2q+1} \in \R$ (depending on $\kappa$) such that
\[
\cP_\epsilon(D_t^{f_\epsilon \circ g_{2q,\epsilon} \circ \exp(\lambda_{2q+1} I_\dag \epsilon)})=\epsilon^{q+1} \sum_{r=0}^{q-1} \s_r \epsilon^r+\tilde{\s}_\perp \epsilon^{2q+1}+O(\epsilon^{2q+2}),
\]
where $\s_r$'s are off-diagonal and $\tilde{\s}_\perp$ is orthogonal to $\i \fk$. In order to simplify the notation, for $m=0,1,\ldots,q+1$ we set endomorphisms $\psi_{m,\epsilon}$ by $\psi_{0,\epsilon}:=g_{2q,\epsilon} \circ \exp(\lambda_{2q+1} I_\dag \epsilon)$ and
\[
\psi_{m,\epsilon}:=\psi_{0,\epsilon} \circ \exp(s_{2q+1,q+1} \epsilon^{q+1}) \circ \cdots \circ \exp(s_{2q+1,q+m} \epsilon^{q+m}), \quad m=1,\ldots,q+1,
\]
where Hermitian endomorphisms $s_{2q+1,q+1},\ldots,s_{2q+1,2q+1}$ will be chosen in later arguments. If we perturb $D_t^{f_\epsilon \circ \psi_{0,\epsilon}}$ by $\exp(s_{2q+1,q+1} \epsilon^{q+1})$ for some off-diagonal $s_{2q+1,q+1}$, this changes $\s_r$ (resp. $\tilde{\s}_\perp$) while keeping it off-diagonal (resp. orthogonal to $\i \fk$). On the other hand, the first potentially diagonal change happens at the order $\epsilon^{2q+1}$. By Lemma \ref{Laplacian and Idpm}, we can compute this change as $-2 \lambda D_0^\ast [a,s_{2q+1,q+1}] \epsilon^{2q+1}$, which is contained in the image of $\Delta_{\eta,D_0}$. Thus, for a suitable choice of off-diagonal $s_{2q+1,q+1}$ we have
\[
\cP_\epsilon(D_t^{f_\epsilon \circ \psi_{1,\epsilon}})=\epsilon^{q+1} \sum_{r=1}^{q-1} \s_r \epsilon^r+\tilde{\s}_\perp \epsilon^{2q+1}+O(\epsilon^{2q+2})
\]
after replacing $\s_r$ and $\tilde{\s}_\perp$. Inductively, we can find off-diagonal Hermitian endomorphisms $s_{2q+1,q+1},\ldots,s_{2q+1,2q}$ such that
\[
\cP_\epsilon(D_t^{f_\epsilon \circ \psi_{q,\epsilon}})=\tilde{\s}_\perp \epsilon^{2q+1}+O(\epsilon^{2q+2}).
\]
Then we can remove $O(\epsilon^{2q+1})$-term by using the fact that $\tilde{\s}_\perp$ is orthogonal to $\i \fk$, \ie there exists a Hermitian endomorphism $s_{2q+1,2q+1}$ such that
\[
\cP_\epsilon(D_t^{f_\epsilon \circ \psi_{q+1,\epsilon}})=O(\epsilon^{2q+2}).
\]
Repeating this process, we can remove all the error terms up to order $\epsilon^N$.
\end{proof}

\begin{lem} \label{estimate for the linearization}
There exists a constant $C>0$ such that for all $0<\epsilon \ll 1$ and Hermitian endomorphisms $s$ orthogonal to $\Id_E$, we have
\[
\|\d|_{D_t,s}\cP_\epsilon\|_{L^2} \geq C \epsilon^{2q}\|s\|_{L^2},
\]
where $\| \cdot \|_{L^2}$ denotes the $L^2$-norm with respect to $\eta$ and $h_0$.
\end{lem}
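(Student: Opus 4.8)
\emph{Proof proposal.} The plan is to establish the estimate by a compactness (contradiction) argument built around the spectral theory of the limiting operator. Recall from Lemma \ref{P is a deformation of P0} that $\d|_{D_0}\cP_0=\frac{1}{4\pi c_1(L)^2}\Delta_{\eta,D_0}$ is formally self-adjoint and elliptic, and that its kernel consists of the $D_0$-parallel Hermitian endomorphisms of $\Gr(E)=V_1\oplus V_2$; since $V_1$ and $V_2$ are non-isomorphic Mumford stable bundles, this kernel is exactly $\i\fk=\mathbb{R}\Id_{V_1}\oplus\mathbb{R}\Id_{V_2}$. It is one dimension larger than $\mathbb{R}\Id_E$, the complementary direction being spanned by $\Id_\pm$, and it is along $\Id_\pm$ that $\d|_{D_t}\cP_\epsilon$ degenerates as $\epsilon\to0$. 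I would first record that $\d|_{D_t}\cP_\epsilon$ is a uniformly elliptic, formally self-adjoint second-order operator (ellipticity as in Lemma \ref{ellipticity}, with symbol $\sim\frac{1}{c_1(L)^2}|\xi|_\eta^2\Id_E$ up to $O(\epsilon)$; self-adjointness from the Bianchi identity and $d\omega=0$, as in the proof of Lemma \ref{small deformation}), whose coefficients converge to those of $\d|_{D_0}\cP_0$, since $D_t-D_0=\lambda\epsilon^q[a,\cdot]$ and $\cP_\epsilon=\cP_0+O(\epsilon^2)$.

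Suppose the claim fails. Then there are $\epsilon_i\to0$ and Hermitian endomorphisms $s_i\perp\Id_E$ with $\|s_i\|_{L^2}=1$ and $\|\d|_{D_{t_i}}\cP_{\epsilon_i}(s_i)\|_{L^2}=o(\epsilon_i^{2q})$. The uniform $L^2$-elliptic estimate gives $\|s_i\|_{H^2}\leq C(\|\d|_{D_{t_i}}\cP_{\epsilon_i}s_i\|_{L^2}+\|s_i\|_{L^2})\leq 2C$, so after passing to a subsequence $s_i\to s_\infty$ strongly in $L^2$ (and weakly in $H^2$), with $\|s_\infty\|_{L^2}=1$. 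Pairing the equation with a fixed smooth endomorphism and using self-adjointness together with the convergence of coefficients yields $\d|_{D_0}\cP_0 s_\infty=0$; hence $s_\infty\in\i\fk$, and $s_\infty\perp\Id_E$ forces $s_\infty=c_\infty\Id_\pm$ with $c_\infty\neq0$.

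To reach a contradiction I would project the equation onto $\i\fk$. By self-adjointness, $\Pi_{\i\fk}\d|_{D_{t_i}}\cP_0(s_i)$ is governed by the pairings $\langle s_i,\Delta_{\eta,D_{t_i}}\Id_{V_l}\rangle_{L^2}$; since $\Id_E$ is parallel for every connection, $\Delta_{\eta,D_{t_i}}\Id_E=0$, while by Lemma \ref{Laplacian and Idpm} $\Delta_{\eta,D_{t_i}}\Id_\pm=O(\epsilon_i^{2q})$ with $\Pi_{\i\fk}$-leading part the negative multiple $-\varrho\lambda^2\Id_\pm$, $\varrho>0$. Together with $s_i\to c_\infty\Id_\pm$ this gives $\Pi_{\i\fk}\d|_{D_{t_i}}\cP_0(s_i)=\kappa c_\infty\epsilon_i^{2q}\Id_\pm+o(\epsilon_i^{2q})$ with $\kappa\neq0$. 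On the other hand, because $\cP_\epsilon-\cP_0$ depends on the connection only through its curvature, the term $\Pi_{\i\fk}\d|_{D_{t_i},s_i}(\cP_{\epsilon_i}-\cP_0)$ can be integrated by parts against the $\Id_{V_l}$: the derivatives fall onto $\Id_{V_l}$, producing a factor $D_{t_i}\Id_{V_l}=O(\epsilon_i^q)$, and the curvature expansion contributes a factor $O(\epsilon_i^2)$, so this term is $O(\epsilon_i^{q+2})$. When $q=1$ this is $o(\epsilon_i^{2q})$ and the two computations combine to $\|\Pi_{\i\fk}\d|_{D_{t_i}}\cP_{\epsilon_i}(s_i)\|_{L^2}\gtrsim\epsilon_i^{2q}$; for larger $q$ one peels off the finitely many coefficients of orders $\epsilon_i^{q+2},\dots,\epsilon_i^{2q-1}$ (any nonzero one already contradicts the hypothesis) and uses the $\varrho>0$ term to see that the coefficient of $\epsilon_i^{2q}$ is nonzero. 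In all cases $\|\d|_{D_{t_i}}\cP_{\epsilon_i}(s_i)\|_{L^2}\gtrsim\|\Pi_{\i\fk}\d|_{D_{t_i}}\cP_{\epsilon_i}(s_i)\|_{L^2}\gtrsim\epsilon_i^{2q}$, contradicting $\|\d|_{D_{t_i}}\cP_{\epsilon_i}(s_i)\|_{L^2}=o(\epsilon_i^{2q})$, which proves the lemma.

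The step I expect to be the main obstacle is precisely this last error analysis: one must verify that the $\i\fk$-component of the linearization retains its definite, $\varrho$-driven $\epsilon^{2q}$ behaviour after adding the off-diagonal corrections coming from replacing $D_0$ by $D_t$ and the intermediate-order corrections coming from $\cP_\epsilon-\cP_0$. The facts that make this go through are that $\Pi_{\i\fk}$ annihilates off-diagonal endomorphisms, that $D_t\Id_{V_l}=O(\epsilon^q)$ (equivalently $\d_{\Id_\pm}F_{D_0}=0$, since $\Id_\pm$ is $D_0$-parallel), and that $\Delta_{\eta,D_t}\Id_\pm$ has no $O(\epsilon^q)$ term by Lemma \ref{Laplacian and Idpm}; keeping this bookkeeping straight is where the real work lies, the remainder being the standard spectral-perturbation scheme underlying the quantitative implicit function theorem.
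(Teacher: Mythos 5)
Your proposal takes a genuinely different route from the paper: the paper proves the lemma by a direct coercivity estimate, decomposing $s=\nu \Id_\pm+s_\perp$ with $s_\perp$ orthogonal to $\i\fk$, testing against $s$ itself, and combining (i) $-\nu^2(\d|_{D_t,\Id_\pm}\cP_\epsilon,\Id_\pm)_{L^2}\geq C\epsilon^{2q}\|\nu\Id_\pm\|_{L^2}^2$ (from Lemma \ref{Laplacian and Idpm} and \eqref{D'D''-D''D' for Idpm}), (ii) a Poincar\'e/spectral-gap bound $-(\d|_{D_t,s_\perp}\cP_\epsilon,s_\perp)_{L^2}\geq C\|s_\perp\|_{L^2}^2$ on $(\i\fk)^\perp$, stable under the $O(\epsilon)$ perturbation of $\d|_{D_0}\cP_0$, and (iii) absorption of the cross term, which is $O(\epsilon^{q+2})\|\nu\Id_\pm\|_{L^2}\|s_\perp\|_{L^2}$, via Young's inequality. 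Your compactness scheme replaces (ii) by a soft kernel identification in the limit, and this is where the argument breaks.

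Concretely, in your contradiction step the pairing $(\d|_{D_{t_i},s_i}\cP_{\epsilon_i},\Id_\pm)_{L^2}=(s_i,\d|_{D_{t_i},\Id_\pm}\cP_{\epsilon_i})_{L^2}$ has main term of size $\varrho|c_\infty|\epsilon_i^{2q}$, but the error terms (the $(\cP_{\epsilon_i}-\cP_0)$-contribution and the part of $\d|_{D_{t_i},\Id_\pm}\cP_{\epsilon_i}$ orthogonal to $\i\fk$) are only controlled by $C\epsilon_i^{q+2}\|s_{i,\perp}\|_{L^2}+O(\epsilon_i^{2q+1})$, where $s_{i,\perp}$ is the component of $s_i$ orthogonal to $\i\fk$. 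Strong $L^2$ convergence gives $\|s_{i,\perp}\|_{L^2}\to 0$ with no rate, so nothing prevents, say, $\|s_{i,\perp}\|_{L^2}\sim\epsilon_i^{1/2}$, in which case for $q\geq 3$ (and already at $q=2$ with your stated bound $O(\epsilon_i^{q+2})$, which is then the same order as the main term) the error swamps the $\epsilon_i^{2q}$ term and no contradiction results. The fallback you propose, ``peeling off the coefficients of orders $\epsilon_i^{q+2},\dots,\epsilon_i^{2q-1}$,'' is not a valid step: the quantity you would expand depends on the varying sequence $s_i$, so there are no fixed coefficients, and the hypothesis $\|\d|_{D_{t_i},s_i}\cP_{\epsilon_i}\|_{L^2}=o(\epsilon_i^{2q})$ gives no information about such coefficients. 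What is missing is precisely the quantitative coercivity (ii): once you prove it, you get $\|s_{i,\perp}\|_{L^2}=O(\epsilon_i^{q+2})+o(\epsilon_i^{2q})$ and the errors become negligible for every $q$ --- but at that point you have reconstructed the paper's direct estimate (testing against $s$ rather than only against the degenerate direction $\Id_\pm$), and the compactness extraction is superfluous.
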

\begin{proof}
It suffices to show that
\[
-(\d|_{D_t,s}\cP_\epsilon,s)_{L^2} \geq C \epsilon^{2q} \|s\|_{L^2}^2
\]
for all Hermitian endomorphisms $s$ orthogonal to $\Id_E$, where $(\cdot,\cdot)_{L^2}$ denotes the $L^2$-inner product with respect to $\eta$ and $h_0$. In what follows, we use the same notation $C$ to express the uniform positive constant, but it changes from line to line. We decompose $s$ as $s=\nu I_\dag+s_\perp$, where $\nu \in \R$ and $s_\perp$ is orthogonal to $\i \fk$. Since $\d|_{D_t}\cP_\epsilon$ is self-adjoint, we compute
\[
-(\d|_{D_t,s}\cP_\epsilon,s)_{L^2}=-\nu^2 (\d|_{D_t,I_\dag}\cP_\epsilon,I_\dag)_{L^2}-2\nu(\d|_{D_t,I_\dag}\cP_\epsilon,s_\perp)_{L^2}-(\d|_{D_t,s_\perp}\cP_\epsilon,s_\perp)_{L^2}.
\]
By the definition, the linearized operator $\d|_{D_t} \cP_\epsilon$ is wedge products of four types of terms:
\begin{enumerate}
\renewcommand{\labelenumi}{(\roman{enumi})}
\item $D_t' D_t''-D_t'' D_t'$
\item $F_{D_t}$
\item $\omega \Id_E$
\item $\chi \Id_E$
\end{enumerate}
and each term contains exactly one $D_t' D_t''-D_t'' D_t'$. The curvature $F_{D_t}$ satisfies $F_{D_t}=F_{D_0}+\frac{\i}{2 \pi} t D_0 a+\frac{\i}{2 \pi} t^2 a \wedge a$. First we consider the term $-\nu^2(\d|_{D_t,I_\dag}\cP_\epsilon,I_\dag)_{L^2}$. Applying the formula \eqref{the linearization of curvature of Dt} to $I_\dag$, we have
\begin{equation} \label{D'D''-D''D' for Idpm}
D_t' D_t''I_\dag-D_t'' D_t'I_\dag=t\big( D_0'' [\g^\ast,I_\dag]+D_0'[\g,I_\dag] \big)+t^2 \big( [\g, [\g^\ast,I_\dag]]-[\g^\ast,[\g,I_\dag]] \big),
\end{equation}
where the first term in the right hand side is off-diagonal and $O(\epsilon^q)$, while the second term is diagonal and $O(\epsilon^{2q})$. Thus combining with Lemma \ref{P is a deformation of P0} and Lemma \ref{Laplacian and Idpm}, we obtain
\[
\Pi_{\i \fk} (\d|_{D_t,I_\dag} \cP_\epsilon)=-\rho \epsilon^{2q} I_\dag+O(\epsilon^{2q+1})
\]
for some constant $\rho>0$. Thus we have
\[
-\nu^2(\d|_{D_t,I_\dag}\cP_\epsilon,I_\dag)_{L^2} \geq C \epsilon^{2q} \|\nu I_\dag\|^2.
\]
Next, to deal with the term $-(\d|_{D_t,s_\perp}\cP_\epsilon,s_\perp)_{L^2}$, we first claim that
\begin{equation} \label{Poincare inequality}
-(\d|_{D_0,s_\perp}\cP_0,s_\perp)_{L^2} \geq C \|s_\perp\|_{L^2}^2
\end{equation}
for any Hermitian endomorphism $s_\perp$ orthogonal to $\i \fk$. Indeed, the operator $\d|_{D_0}\cP_0$ coincides with the Laplacian $\Delta_{\eta,D_0}$ up to scale, whose kernel just coincides with $\i \fk$. Thus the estimate \eqref{Poincare inequality} follows from the Poincar\'e inequality. The similar estimate
\[
-(\d|_{D_t,s_\perp}\cP_\epsilon,s_\perp)_{L^2} \geq C \|s_\perp\|_{L^2}^2
\]
also holds for all Hermitian endomorphisms $s_\perp$ orthogonal to $\i \fk$ since $\d|_{D_t}\cP_\epsilon$ is an $O(\epsilon)$-perturbation of $\d|_{D_0}\cP_0$. Finally, let us consider the second term $-2\nu(\d|_{D_t,I_\dag}\cP_\epsilon,s_\perp)_{L^2}$. By using Lemma \ref{P is a deformation of P0}, Lemma \ref{Laplacian and Idpm} and \eqref{D'D''-D''D' for Idpm} again, we observe that $(\d|_{D_t,I_\dag}\cP_\epsilon,s_\perp)_{L^2}=O(\epsilon^{q+2})$. However, by the elementary inequality $xy \leq \frac{1}{2} x^2+\frac{1}{2} y^2$, we know that
\[
-2\nu(\d|_{D_t,I_\dag}\cP_\epsilon,s_\perp)_{L^2} \geq -C \epsilon^{2q+2} \|\nu I_\dag\|_{L^2}^2-C \epsilon^2 \|s_\perp\|_{L^2}^2.
\]
So the term $-2\nu(\d|_{D_t,I_\dag}\cP_\epsilon,s_\perp)_{L^2}$ can be absorbed in other good terms for sufficiently small $\epsilon$. Hence we obtain the desired estimate.
\end{proof}
Since the off-diagonal component of $g_{N,\epsilon}$ is $O(\epsilon^{q+1})$ from the construction, one can easily check that Lemma \ref{estimate for the linearization} still holds for the connection $D_t^{f_\epsilon \circ g_{N,\epsilon}}$. Also we can show that the operator $\d|_{D_t^{f_\epsilon \circ g_{N,\epsilon}}} \cP_\epsilon$ is elliptic with trivial kernel by using Lemma \ref{ellipticity} and the argument in the last part of the proof of Theorem \ref{existence theorem in Mumford stable case}. In particular, for a sufficiently large integer $r$, we have an estimate for the inverse operator $\cQ_{N,\epsilon} \colon L_{r,0}^2 \to L_{r+2,0}^2$ of $\d|_{D_t^{f_\epsilon \circ g_{N,\epsilon}}} \cP_\epsilon$
\[
\|\cQ_{N,\epsilon}(s)\|_{L^2} \leq C \epsilon^{-2q} \|s\|_{L^2}
\]
for all Hermitian endomorphisms $s$ orthogonal to $\Id_E$, where $L_{r,0}^2$ denotes the Sobolev space consisting of all $L_r^2$-sections of $E$ having zero mean value. Combining this with the standard elliptic estimate as in \cite[Proposition 4.12]{ST20}, we obtain an estimate
\begin{equation} \label{estimate for the inverse operator}
\|\cQ_{N,\epsilon}\|_{\op} \leq C \epsilon^{-2q}
\end{equation}
as an operator from $L_{r,0}^2$ to $L_{r+2,0}^2$. On the other hand, the Lipschitz constant of the non-linear term $\cM_{N,\epsilon}(s):=\cP_\epsilon(D_t^{f_\epsilon \circ g_{N,\epsilon} \circ \exp(s)})-\d|_{D_t^{f_\epsilon \circ g_{N,\epsilon}},s} \cP_\epsilon$ can be controlled by the following consequence from the mean value theorem that can be achieved exactly by the same argument as in \cite[Lemma 2.10]{Fin04}:
\begin{lem} \label{mean value theorem}
There exist constants $\d, C>0$ such that for sufficiently small $\epsilon>0$, if $\|s\|_{L_{r+2}^2}, \|s'\|_{L_{r+2}^2} \leq \d$, then we have
\[
\|\cM_{N,\epsilon}(s)-\cM_{N,\epsilon}(s')\|_{L_r^2} \leq C \big( \|s\|_{L_{r+2}^2}+\|s'\|_{L_{r+2}^2} \big) \|s-s'\|_{L_{r+2}^2}.
\]
\end{lem}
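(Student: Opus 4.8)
The plan is to carry out the standard quadratic-estimate argument underlying \cite[Lemma 2.10]{Fin04}; I will only indicate the structure and the single place where uniformity in $\epsilon$ has to be checked. First I would fix the reference connection $D^\ast_\epsilon:=(g_{N,\epsilon}\circ f_\epsilon)\cdot D_t$ and, for Hermitian endomorphisms $s$ with $\|s\|_{L^2_{r+2}}$ small, set
\[
\Phi_\epsilon(s):=\cP_\epsilon\big((\exp(s)\circ g_{N,\epsilon}\circ f_\epsilon)\cdot D_t\big).
\]
By the very definition of $\d|_{\cdot,\cdot}\cP_\epsilon$ (the gauge-direction linearization used throughout this section), the Fr\'echet derivative of $\Phi_\epsilon$ at $0$ is $D\Phi_\epsilon(0)(s)=\d|_{D^\ast_\epsilon,s}\cP_\epsilon$, so that $\cM_{N,\epsilon}(s)=\Phi_\epsilon(s)-D\Phi_\epsilon(0)(s)$ is $\Phi_\epsilon$ minus its linearization at $s=0$; in particular $D\cM_{N,\epsilon}(0)=0$. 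Granting that $\Phi_\epsilon$ is a $C^2$ map from the ball $\{\|s\|_{L^2_{r+2}}\leq\d\}$ into $L^2_r$ with $\sup_{\epsilon\in(0,\epsilon_0]}\sup_{\|s\|_{L^2_{r+2}}\leq\d}\|D^2\Phi_\epsilon(s)\|_{\op}=:M<\infty$, the lemma follows from two applications of the fundamental theorem of calculus: writing $s_\tau:=s'+\tau(s-s')$ one gets $\cM_{N,\epsilon}(s)-\cM_{N,\epsilon}(s')=\int_0^1\big(D\Phi_\epsilon(s_\tau)-D\Phi_\epsilon(0)\big)(s-s')\,d\tau$, and $\|D\Phi_\epsilon(s_\tau)-D\Phi_\epsilon(0)\|_{\op}\leq M\|s_\tau\|_{L^2_{r+2}}\leq M\big(\|s\|_{L^2_{r+2}}+\|s'\|_{L^2_{r+2}}\big)$, which gives the estimate with $C=M$.

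Next I would establish the regularity of $\Phi_\epsilon$ by factoring it through elementary maps: $s\mapsto\exp(\pm s)$; the gauge action $\exp(s)\cdot D^\ast_\epsilon$, whose curvature is, by \eqref{action of the gauge group}--\eqref{change of the curvature}, a polynomial in $s$ and its first derivatives with coefficients analytic in $s$ and built from $F_{D^\ast_\epsilon}$; and finally $\cP_\epsilon$, which is algebraic of degree $n=2$ in the curvature. Taking $r$ large enough that $L^2_{r+2}$ and $L^2_{r+1}$ are Banach algebras on the (real four-dimensional) surface $X$ --- so that Sobolev multiplication is available, one differentiation costs one Sobolev level, and post-composition with entire functions is smooth with locally bounded derivatives --- exhibits $\Phi_\epsilon$ as a smooth (indeed real-analytic) map $\{\|s\|_{L^2_{r+2}}\leq\d\}\to L^2_r$, together with an a priori bound on $D^2\Phi_\epsilon$ in terms of the Sobolev norms of $D^\ast_\epsilon$, $F_{D^\ast_\epsilon}$ and of the coefficients of $\cP_\epsilon$.

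The step I expect to be the main obstacle is making this bound uniform in $\epsilon$. The reference connection itself causes no trouble: since $f_\epsilon=\Id_E+O(\epsilon)$, $g_{N,\epsilon}=\Id_E+O(\epsilon^{q})$ and $D_t=D_0+O(\epsilon^q)$, we have $D^\ast_\epsilon\to D_0$ in $C^\infty$, so all its Sobolev norms, and those of $F_{D^\ast_\epsilon}$, are bounded uniformly in $\epsilon$. The delicate point is the explicit $\epsilon$-dependence of $\cP_\epsilon$, through the coefficient $c_\epsilon$ and through the term $\e^{-1}\chi\Id_E$ (recall $\e=\epsilon^2$) inside $(F_D+\e^{-1}\chi\Id_E)^2$. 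Here I would invoke Lemma \ref{P is a deformation of P0}: all negative powers of $\epsilon$ in $\cP_\epsilon$ collect into $\e^{-1}\big(\tfrac{[\omega]\cdot c_1(L)}{c_1(L)^2}\chi^2-\omega\chi\big)\Id_E$, which vanishes by \eqref{J-equation on surfaces repeat} and which is anyway independent of $s$, hence contributes nothing to $D^2\Phi_\epsilon$; what is left of $\cP_\epsilon$ is a curvature-polynomial whose coefficients converge to those of $\cP_0$ and are therefore bounded uniformly for $0<\epsilon\leq\epsilon_0$. Feeding this, along with the uniform control of $D^\ast_\epsilon$, into the algebra estimates of the previous paragraph yields $M<\infty$, with $\d$ and $C=M$ depending on $N$ but not on $\epsilon$. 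Beyond this cancellation --- which ultimately rests on $\chi$ solving the line-bundle $J$-equation \eqref{J-equation on surfaces repeat} --- the proof is the routine Sobolev-multiplication bookkeeping of \cite[Lemma 2.10]{Fin04}, and I would not reproduce it in detail.
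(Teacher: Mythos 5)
Your proposal is correct and takes essentially the same approach as the paper, which proves this lemma simply by invoking the mean-value-theorem argument of \cite[Lemma 2.10]{Fin04}: your reduction to a uniform second-derivative bound for $s\mapsto\cP_\epsilon((\exp(s)\circ g_{N,\epsilon}\circ f_\epsilon)\cdot D_t)$ via the fundamental theorem of calculus, the Sobolev-algebra bookkeeping for large $r$, and the $\epsilon$-uniformity obtained from Lemma \ref{P is a deformation of P0} together with the convergence of the reference connections to $D_0$ are precisely what that citation is meant to cover.
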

We also use the quantitative version of the implicit function theorem as follows:
\begin{thm}[\cite{Fin04}, Theorem 4.1] \label{quantitative IFT}
Let $P \colon V \to W$ be a differentiable map on Banach spaces $V$, $W$ whose linearization at the origin $\d|_0 P$ is invertible with inverse $Q$. Assume that $P-\d|_0 P$ is Lipschitz of constant $\frac{1}{2 \|Q\|_{\op}}$ on the closed ball of radius $\rho>0$ centered at $0$ in $V$. Then for all $w \in W$ with $\|w-P(0)\|<\frac{\rho}{2 \|Q\|_{\op}}$, there exists $v \in V$ such that $P(v)=w$.
\end{thm}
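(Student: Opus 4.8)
The plan is to prove this by the contraction mapping principle, recasting the equation $P(v)=w$ as a fixed point problem on the closed ball $\bar{B}_\rho\subset V$ of radius $\rho$ about the origin. Write $N:=P-\d|_0 P$ for the nonlinear remainder, so that by hypothesis $N$ is Lipschitz with constant $\frac{1}{2\|Q\|_{\op}}$ on $\bar{B}_\rho$, and note that $N(0)=P(0)-\d|_0 P(0)=P(0)$ since $\d|_0 P$ is linear. Because $\d|_0 P$ is invertible with inverse $Q$, the equation $P(v)=w$, i.e.\ $\d|_0 P(v)=w-N(v)$, is equivalent to the fixed point equation $v=T(v)$ for the map
\[
T\colon\bar{B}_\rho\to V,\qquad T(v):=Q\bigl(w-N(v)\bigr)=Q(w)-Q\bigl(N(v)\bigr).
\]

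First I would show $T$ is a contraction with factor $\tfrac12$: for $v,v'\in\bar{B}_\rho$,
\[
\|T(v)-T(v')\|=\bigl\|Q\bigl(N(v')-N(v)\bigr)\bigr\|\le\|Q\|_{\op}\cdot\tfrac{1}{2\|Q\|_{\op}}\,\|v-v'\|=\tfrac12\|v-v'\|,
\]
using the operator norm of $Q$ and the Lipschitz bound for $N$ on $\bar{B}_\rho$. Next I would check that $T$ maps $\bar{B}_\rho$ into itself: here the hypothesis on $w$ enters, giving $\|T(0)\|=\|Q(w-P(0))\|\le\|Q\|_{\op}\|w-P(0)\|<\|Q\|_{\op}\cdot\frac{\rho}{2\|Q\|_{\op}}=\tfrac{\rho}{2}$, whence $\|T(v)\|\le\|T(v)-T(0)\|+\|T(0)\|\le\tfrac12\|v\|+\tfrac{\rho}{2}\le\rho$ for all $v\in\bar{B}_\rho$. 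Since $\bar{B}_\rho$ is closed in the Banach space $V$, it is a complete metric space, so the Banach fixed point theorem produces a (unique) $v\in\bar{B}_\rho$ with $T(v)=v$; applying $\d|_0 P$ to $v=Q(w-N(v))$ yields $\d|_0 P(v)=w-N(v)=w-P(v)+\d|_0 P(v)$, i.e.\ $P(v)=w$, as desired.

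There is no serious obstacle in this argument; the only thing to watch is the bookkeeping of the two numerical factors, namely that the Lipschitz constant $\frac{1}{2\|Q\|_{\op}}$ of $N$ is chosen precisely so that composing with $Q$ produces the contraction factor $\tfrac12$, while the factor $\tfrac12$ in the threshold $\|w-P(0)\|<\frac{\rho}{2\|Q\|_{\op}}$ leaves exactly the room needed for $T$ to preserve $\bar{B}_\rho$. In the intended application one takes $P(s):=\cP_\epsilon\bigl((\exp(s)\circ g_{N,\epsilon}\circ f_\epsilon)\cdot D_t\bigr)$ on the relevant mean-zero Sobolev spaces and $w=0$, so that $P-\d|_0 P=\cM_{N,\epsilon}$ is Lipschitz by Lemma \ref{mean value theorem}, $\|Q\|_{\op}=O(\epsilon^{-2q})$ by Lemma \ref{estimate for the linearization}, and the error $\|w-P(0)\|=\bigl\|\cP_\epsilon((g_{N,\epsilon}\circ f_\epsilon)\cdot D_t)\bigr\|=O(\epsilon^{N+1})$ beats the threshold $\frac{\rho}{2\|Q\|_{\op}}$ once $N$ is taken large enough.
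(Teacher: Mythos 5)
Your argument is correct: the reformulation $v=Q(w-N(v))$ with $N=P-\d|_0 P$, the contraction factor $\tfrac12$, and the check that $T$ preserves the closed ball of radius $\rho$ (using $\|T(0)\|<\rho/2$) are all accurate, and the Banach fixed point theorem then gives the solution. The paper itself offers no proof — it simply cites \cite{Fin04} — and your contraction-mapping argument is precisely the standard proof given there, so nothing further is needed.
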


Now we are ready to prove Theorem \ref{existence of a solution on twisted bundles repeat} (3) in the two components case.

\begin{proof}[Completion of the proof of Theorem \ref{existence of a solution on twisted bundles repeat} (3)]
By Lemma \ref{mean value theorem}, the operator $\cM_{N,\epsilon}$ is Lipschitz with constant $2 \rho C$ on the closed ball of radius $\rho$ centered at $0$. Thus the condition $2 \rho C=\frac{1}{2 \|\cQ_{N,\epsilon} \|_{\op}}$ together with \eqref{estimate for the inverse operator} imposes that
\[
\frac{\rho}{2 \|\cQ_{N,\epsilon}\|_{\op}}=\frac{1}{8C \|\cQ_{N,\epsilon}\|_{\op}^2} \geq C' \epsilon^{4q}
\]
for some constant $C'>0$. So if we take $N=4q$, then we have $\|\cP_\epsilon(D_t^{f_\epsilon \circ g_{4q,\epsilon}}) \|_{L_r^2} \leq \frac{\rho}{2 \|\cQ_{4q,\epsilon}\|_{\op}}$, and hence we can apply Theorem \ref{quantitative IFT} to conclude that there exists a connection $D_\epsilon$ satisfying
\[
\cP_\epsilon(D_\epsilon)=0
\]
for sufficiently small $\epsilon>0$. Just as in the Mumford stable case, one can show that $D_\epsilon \otimes D_L^k$ with $\epsilon^2=1/k$ is a $J$-positive smooth solution. This completes the proof.
\end{proof}

\begin{exam}
Let us consider a simple rank-$3$ vector bundle over $\C\P^2$ studied by Maruyama in the context of the Gieseker stability and Dervan--Maccarthy--Sektnan in the context of the asymptotic $Z$-stability \cite[Example 2.18]{DMS20}. We follow its presentation in \cite[page 177]{OSS11}. Set $L:=\cO_{\C\P^2}(1)$ and take any K\"ahler form $\omega \in c_1(\cO_{\C\P^2}(1))$. Then we have $\eta=\chi=\omega$ in \eqref{J-equation on surfaces repeat}. Let $S$ be a rank-$2$ Mumford stable bundle (with respect to $\omega$) satisfying $c_1(S)=0$ and $H^1(\C\P^2,S) \neq 0$. In particular, we have $c_2(S) \geq 2$ by a refined version of the Bogomolov--Gieseker inequality for rank-$2$ Mumford stable bundles over $\C\P^2$ (see the proof of \cite[Lemma 1.2.7]{OSS11}). Then any non-zero element in $H^1(\C\P^2,S)$ defines a non-trivial extension
\[
0 \to S \to E \to \cO_{\C\P^2} \to 0,
\]
and Chern classes of $E$ are given by
\[
c_1(E)=0, \quad c_2(E)=c_2(S).
\]
So the bundle $E$ is not Mumford stable since $\mu_{\omega}(S)=\mu_{\omega}(E)$, but $E$ is in fact Gieseker stable. It follows that $E$ is simple and Mumford semistable with
\[
0 \subset S \subset E
\]
giving a Jordan--H\"older filtration, and the associated graded object $\Gr(E)$ is given by
\[
\Gr(E)=S \oplus \cO_{\C\P^2}.
\]
We compute
\[
[\omega] \cdot \ch_1(S \otimes L^k)=2k, \quad \ch_2(S \otimes L^k)=k^2-c_2(S),
\]
\[
[\omega] \cdot \ch_1(E \otimes L^k)=3k, \quad \ch_2(E \otimes L^k)=\frac{3}{2} k^2-c_2(E),
\]
which shows that
\[
\varphi_k(E)-\varphi_k(S)=\frac{1}{6k} c_2(S)>0.
\]
Hence from the proof of Theorem \ref{existence of a solution on twisted bundles repeat}, we find that there exists a solution to the $J$-equation with respect to $\omega$ on $E \otimes L^k$ for sufficiently large $k$.
\end{exam}

\subsection{Vortex bundles} \label{Vortex bundles}
Let $\Sigma$ be a Riemann surface, $L$ a holomorphic line bundle of degree $1$ over $\Sigma$ and $h_\Sigma$ a Hermitian metric on $L$ with curvature $\omega_\Sigma>0$. Set $X:=\Sigma \times \C\P^1$ and $\omega:=s \pi_1^\ast \omega_\Sigma+\pi_2^\ast \omega_{\FS}$ ($s \in \R_{>0}$), where $\pi_1 \colon X \to \Sigma$, $\pi_2 \colon X \to \C\P^1$ are projections. The vortex bundle $E$ is a rank-$2$ vector bundle over $X$ defined by
\[
E:=\pi_1^\ast((r_1+1)L) \otimes \pi_2^\ast (r_2 \cO(2)) \oplus \pi_1^\ast(r_1 L) \otimes \pi_2^\ast ((r_2+1) \cO(2)),
\]                                        
where $r_1, r_2$ are positive integers and the holomorphic structure on $E$ is chosen so that the first component $\pi_1^\ast((r_1+1)L) \otimes \pi_2^\ast (r_2 \cO(2))$ is a holomorphic subbundle of $E$.

First of all, we will check that the bundle $E$ satisfies the condition \eqref{positivity for Chern characters} for all $r_1, r_2, s$. Indeed, we have
\begin{equation} \label{ch1 E}
\begin{aligned}
\ch_1(E)&=\ch_1 \big( \pi_1^\ast((r_1+1)L) \otimes \pi_2^\ast(r_2 \cO(2)) \big)+\ch_1\big( \pi_1^\ast(r_1 L) \otimes \pi_2^\ast((r_2+1) \cO(2)) \big) \\
&=(2r_1+1) c_1(L)+2(2r_2+1) c_1(\cO(1)),
\end{aligned}
\end{equation}
\begin{equation} \label{omega ch1 E}
\begin{aligned}[]
[\omega] \cdot \ch_1(E)&=\big( s c_1(L)+c_1(\cO(1)) \big) \cdot \big((2r_1+1) c_1(L)+2(2r_2+1) c_1(\cO(1)) \big) \\
&=2r_1+1+2s(2r_2+1)>0,
\end{aligned}
\end{equation}
\begin{equation} \label{ch2 E}
\begin{aligned}
\ch_2(E)&=\ch_2 \big( \pi_1^\ast((r_1+1)L) \otimes \pi_2^\ast (r_2 \cO(2)) \big)+\ch_2 \big( \pi_1^\ast(r_1 L) \otimes \pi_2^\ast ((r_2+1) \cO(2)) \big) \\
&=\ch_1 \big( \pi_1^\ast((r_1+1)L) \big) \cdot \ch_1 \big( \pi_2^\ast(r_2 \cO(2)) \big)+\ch_1 \big(\pi_1^\ast(r_1L) \big) \cdot \ch_1 \big( \pi_2^\ast((r_2+1) \cO(2)) \big) \\
&=2(r_1+1)r_2+2r_1(r_2+1)>0.
\end{aligned}
\end{equation}
In particular, we have
\begin{equation} \label{formula for c}
c=\frac{[\omega] \cdot \ch_1(E)}{2 \ch_2(E)}=\frac{2r_1+1+2s(2r_2+1)}{4(r_1+1)r_2+4r_1(r_2+1)}.
\end{equation}

\subsubsection{A necessary condition}
Now we will study how to choose parameters $r_1,r_2,s$ to obtain a solution of \eqref{J-equation} with respect to $\omega$.
\begin{lem}
If the bundle $E$ admits a $J$-Griffiths positive Hermitian metric satsfying \eqref{J-equation} with a non-trivial second fundamental form of $\pi_1^\ast((r_1+1)L) \otimes \pi_2^\ast(r_2(\cO(2)))$, then we have
\[
s<\frac{r_1(r_1+1)}{2r_2(r_2+1)}.
\]
\end{lem}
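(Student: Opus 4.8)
The plan is to apply Theorem~\ref{J-stability for rank 2 bundles over surfaces} --- or rather the strict inequality that arises inside its proof --- to the distinguished rank~$1$ subbundle $S:=\pi_1^\ast((r_1+1)L) \otimes \pi_2^\ast (r_2 \cO(2)) \subset E$, and then to convert the resulting cohomological inequality into the stated bound on $s$ by a direct Chern character computation on $X=\Sigma \times \C\P^1$.

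By construction $S$ is a holomorphic subbundle of $E$ of rank $1$, and by hypothesis its second fundamental form $A$ is not identically zero. Running the computation in the proof of Theorem~\ref{J-stability for rank 2 bundles over surfaces} with this choice of $S$ --- where $J$-Griffith positivity of $h$ is used precisely to guarantee $c \cdot c_1(E,h)-\omega>0$ --- one arrives at
\[
4c \ch_2(S)-2[\omega] \cdot \ch_1(S)+\frac{\i}{\pi} \int_X \big( c \cdot c_1(E,h)-\omega \big) \Tr (A A^\ast)=2c \ch_2(E)-[\omega] \cdot \ch_1(E),
\]
and since $\i \int_X (c \cdot c_1(E,h)-\omega) \Tr(A A^\ast)>0$ when $A \not\equiv 0$, this yields the \emph{strict} inequality
\[
\ch_2(S) \cdot \big( [\omega] \cdot \ch_1(E) \big) < \big( [\omega] \cdot \ch_1(S) \big) \cdot \ch_2(E).
\]

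It then remains to evaluate the four cohomological quantities, using $c_1(L)^2=c_1(\cO(1))^2=0$ and $c_1(L) \cdot c_1(\cO(1))=1$. The formulas \eqref{ch1 E}, \eqref{omega ch1 E}, \eqref{ch2 E} already give $[\omega] \cdot \ch_1(E)=2r_1+1+2s(2r_2+1)$ and $\ch_2(E)=2(r_1+1)r_2+2r_1(r_2+1)$; in the same way $\ch_1(S)=(r_1+1)c_1(L)+2r_2 c_1(\cO(1))$ gives $[\omega] \cdot \ch_1(S)=(r_1+1)+2sr_2$ and $\ch_2(S)=\tfrac12 c_1(S)^2=2r_2(r_1+1)$. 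Substituting these into the strict inequality and collecting the terms linear in $s$, everything cancels down to $2sr_2(r_2+1)<r_1(r_1+1)$, which is exactly $s<\tfrac{r_1(r_1+1)}{2r_2(r_2+1)}$. I do not anticipate any genuine obstacle here: the only subtle point is that the argument really requires the strict form of the $J$-semistability inequality, which is why the non-triviality of the second fundamental form is assumed; the remaining Chern character bookkeeping sketched above is routine.
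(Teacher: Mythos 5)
Your proposal is correct and is essentially the paper's own proof: it invokes Theorem \ref{J-stability for rank 2 bundles over surfaces} (with the strictness supplied by the non-vanishing second fundamental form, exactly as the paper's argument for that theorem shows) applied to the subbundle $\pi_1^\ast((r_1+1)L)\otimes\pi_2^\ast(r_2\cO(2))$, followed by the same Chern character computation reducing to $2sr_2(r_2+1)<r_1(r_1+1)$. The numerical bookkeeping you sketch matches the paper's.
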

\begin{proof}
We will invoke Theorem \ref{J-stability for rank-2 bundles over surfaces} for the subbundle $\pi_1^\ast((r_1+1)L) \otimes \pi_2^\ast(r_2 \cO(2))$ to know that
\[
\ch_2 \big( \pi_1^\ast((r_1+1)L) \otimes \pi_2^\ast(r_2 \cO(2)) \big) \cdot \big( [\omega] \cdot \ch_1(E) \big)<\bigg( [\omega] \cdot \ch_1 \big( \pi_1^\ast((r_1+1)L) \otimes \pi_2^\ast (r_2 \cO(2)) \big) \bigg) \cdot \ch_2(E).
\]
We compute
\[
\begin{aligned}[]
[\omega] \cdot \ch_1 \big( \pi_1^\ast(r_1+1)L \otimes \pi_2^\ast(r_2 \cO(2) \big)&=\big( s c_1(L)+c_1(\cO(1)) \big) \cdot \big( (r_1+1)c_1(L)+2r_2 c_1(\cO(1)) \big) \\
&=r_1+1+2r_2 s,
\end{aligned}
\]
\[
\begin{aligned}
\ch_2 \big( \pi_1^\ast((r_1+1)L) \otimes \pi_2^\ast(r_2 \cO(2) \big)&=\ch_1 \big(\pi_1^\ast((r_1+1)L) \big) \cdot \ch_1 \big( \pi_2^\ast (r_2\cO(2)) \big)  \\
&=2(r_1+1)r_2.
\end{aligned}
\]
Combining with \eqref{omega ch1 E} and \eqref{ch2 E}, we have
\[
\begin{aligned}
& (r_1+1+2r_2s)\big( 2(r_1+1)r_2+2r_1(r_2+1) \big)-2(r_1+1)r_2 \big( 2r_1+1+2s(2r_2+1) \big) \\
&=2(r_1^2+r_1-2r_2^2s-2r_2s)>0,
\end{aligned}
\]
which is equivalent to
\[
s<\frac{r_1(r_1+1)}{2r_2(r_2+1)}.
\]
\end{proof}
\begin{rk} \label{the lower bound may not be sharp}
As another criterion, we recall that the existence of a $J$-Griffiths positive Hermitian metric on $E$ implies the positivity of the cohomology class
\begin{equation} \label{c c1 E omega}
c \cdot c_1(E)-[\omega]>0.
\end{equation}
By using \eqref{ch1 E} and \eqref{formula for c}, we can compute
\[
\begin{aligned}
& \big( 4(r_1+1)r_2+4r_1(r_2+1) \big) \big( c \cdot c_1(E)-[\omega] \big) \\
&=(4r_1^2+4r_1+2s+1)c_1(L)+2(8r_2^2s+8r_2s+2s+1)c_1(\cO(1))>0.
\end{aligned}
\]
Thus the condition \eqref{c c1 E omega} is satisfied automatically for all $r_1,r_2,s$.
\end{rk}

\subsubsection{Dimensional reduction; the $J$-vortex equation}
Since the group $\SU(2)$ acts on the bundle $\cO(2) (\simeq T' \C\P^1)$, it is natural to consider $\SU(2)$-invariant solutions to \eqref{J-equation}. We observe that
\[
\begin{aligned}
&H^1\big( \Sigma \times \C\P^1, \Hom \big( \pi_1^\ast(r_1L) \otimes \pi_2^\ast((r_2+1)\cO(2)), \pi_1^\ast((r_1+1)L) \otimes \pi_2^\ast(r_2\cO(2)) \big) \\
&=H^1(\Sigma \times \C\P^1, \pi_1^\ast L \otimes \pi_2^\ast \cO(-2)) \\
&\simeq H^0(\Sigma,L) \otimes H^1(\C\P^1,\cO(-2)) \\
& \simeq H^0(\Sigma,L),
\end{aligned}
\]
where we used the facts that $H^0(\C\P^1, \cO(-2))=0$ and $H^1(\C\P^1,\cO(-2)) \simeq H^0(\C\P^1,\cO)^\ast \simeq \C$ by Serre duality. Indeed, any section $\phi \in H^0(\Sigma,L)$ determines an extension whose $\SU(2)$-invariant representative is given by $\pi_1^\ast \phi \wedge \pi_2^\ast \zeta$ with
\[
\zeta=\frac{\s}{(1+|z|^2)^2} \otimes d\bar{z}, \quad \s:=dz,
\]
where we express the section of $\cO(-2)$ as $\s$ to distinguish from the $1$-form $dz$\footnote{Indeed, one can easily check that $\zeta$ is smooth at $z=\infty$ by considering the change of coordinates $w=1/z$.}. Let $h$ be a Hermitian metric on $L$ and $f$ a smooth positive function on $\Sigma$ (which can be regarded as a Hermitian metric on the trivial $\C$-bundle on $\Sigma$). Set
\[
H:=h_ 1 \oplus h_2, \quad h_1:=\pi_1^\ast(h f h_{\Sigma}^{r_1}) \otimes \pi_2^\ast(h_{\FS}^{2r_2}), \quad h_2:=\pi_1^\ast(f h_{\Sigma}^{r_1}) \otimes \pi_2^\ast(h_{\FS}^{2r_2+2}).
\]
So $H, h_1, h_2$ are Hermitian metrics on $E$, $\pi_1^\ast((r_1+1)L) \otimes \pi_2^\ast (r_2 \cO(2))$, $\pi_1^\ast(r_1 L) \otimes \pi_2^\ast ((r_2+1) \cO(2))$ respectively. We choose a holomorphic structure on $E$ so that the second fundamental form $A$ satisfies $A^\ast:=-\pi_1^\ast \phi \otimes \pi_2^\ast \zeta$, where $\ast$ means the adjoint with respect to $H$. Then the associated connection $D_H$ of $H$ has the following expression
\[
D_H=
\begin{pmatrix}
D_{h_1} & -A^\ast \\
A & D_{h_2}
\end{pmatrix},
\]
where $D_{h_1}$, $D_{h_2}$ denotes the associated connection of $h_1$ on $\pi_1^\ast((r_1+1)L) \otimes \pi_2^\ast (r_2 \cO(2))$ and $h_2$ on $\pi_1^\ast(r_1 L) \otimes \pi_2^\ast ((r_2+1) \cO(2))$ respectively. To compute $A$, one can use the induced Hermitian metric $h_1 \otimes h_2^{-1}=\pi_1^\ast h \otimes \pi_2^\ast (h_{\FS}^{-2})$ on $\pi_1^\ast L \otimes \pi_2^\ast \cO(-2)$. By using $h_{\FS}^{-2}(dz,dz)=(1+|z|^2)^2$, we have
\[
\frac{\i}{2 \pi} A^\ast A=\frac{\i}{2 \pi} \pi_1^\ast( \phi \phi^\ast) \otimes \pi_2^\ast( \zeta \wedge \zeta^\ast)=-|\phi|_h^2 \omega_{\FS},
\]
where we regard $\phi$ as a homomorphism from the trivial $\C$-bundle to $L$ and the adjoint $\phi^\ast$ is taken with respect to $h$, \ie $\phi^\ast=h(\cdot,\phi)$. Similarly,
\[
D' \zeta=D' \bigg( \frac{\s}{(1+|z|^2)^2} \bigg) \otimes d\bar{z}.
\]
Since
\[
D'(\s)=\p \log (1+|z|^2)^2 \cdot \s,
\]
we observe that
\[
\begin{aligned}
D' \bigg( \frac{\s}{(1+|z|^2)^2} \bigg)&=\frac{1}{(1+|z|^2)^2} \p \log (1+|z|^2)^2 \cdot \s+\p \bigg(\frac{1}{(1+|z|^2)^2} \bigg) \cdot \s \\
&=\frac{1}{(1+|z|^2)^4} \p (1+|z|^2)^2 \cdot \s-\frac{1}{(1+|z|^2)^4} \p (1+|z|^2)^2 \cdot \s \\
&=0.
\end{aligned}
\]
This shows that
\[
D' A^\ast=-D_h' \phi \wedge \zeta.
\]
Thus we have
\begin{equation} \label{computation for FH}
\begin{aligned}
F_H&=
\begin{pmatrix}
F_{h_1}-\frac{\i}{2 \pi} A^\ast A & -\frac{\i}{2 \pi} D' A^\ast \\
\frac{\i}{2 \pi} D'' A & F_{h_2}-\frac{\i}{2 \pi} A A^\ast
\end{pmatrix} \\
&=
\begin{pmatrix}
F_h+F_f+r_1 \omega_\Sigma+2r_2 \omega_{\FS}+|\phi|_h^2 \omega_{\FS} & \frac{\i}{2\pi} D_h' \phi \wedge \zeta \\
-\frac{\i}{2\pi} D_h'' \phi^\ast \wedge \zeta^\ast & F_f+r_1 \omega_\Sigma+(2r_2+2) \omega_{\FS}-|\phi|_h^2 \omega_{\FS}
\end{pmatrix}.
\end{aligned}
\end{equation}
So in this case, the $J$-equation is given by a system of equations
\[
c \bigg( F_{h_1}-\frac{\i}{2 \pi} A^\ast A \bigg)^2-c \bigg(\frac{\i}{2 \pi} \bigg)^2 D' A^\ast D'' A -\omega \bigg( F_{h_1}-\frac{\i}{2 \pi} A^\ast A \bigg)=0,
\]
\[
c \bigg( F_{h_2}-\frac{\i}{2 \pi} A A^\ast \bigg)^2-c \bigg(\frac{\i}{2 \pi} \bigg)^2 D'' A D' A^\ast-\omega \bigg( F_{h_2}-\frac{\i}{2 \pi} A A^\ast \bigg)=0.
\]
We note that
\[
\bigg( \frac{\i}{2\pi} \bigg)^2 D' A^\ast D'' A=-\bigg( \frac{\i}{2\pi} \bigg)^2 D_h' \phi D_h'' \phi^\ast \pi_2^\ast \zeta \pi_2^\ast \zeta^\ast=\frac{\i}{2 \pi} D_h' \phi D_h'' \phi^\ast \omega_{\FS}.
\]
By using this, the above system further can be reduced to the following system of equations
\begin{equation} \label{hfS}
\begin{aligned}
& 2c(F_h+F_f+r_1 \omega_\Sigma)(2 r_2+|\phi|_h^2)-c \frac{\i}{2 \pi} D_h' \phi D_h'' \phi^\ast \\
& -2r_2 s \omega_\Sigma-(F_h+F_f+r_1 \omega_\Sigma)-s|\phi|_h^2 \omega_\Sigma=0,
\end{aligned}
\end{equation}
\begin{equation}
\begin{aligned} \label{fS}
& 2c (F_f+r_1 \omega_\Sigma) (2r_2+2-|\phi|_h^2)-c \frac{\i}{2 \pi} D_h' \phi D_h'' \phi^\ast \\
& -(2r_2+2)s \omega_{\Sigma}-(F_f+r_1 \omega_\Sigma)+s|\phi|_h^2 \omega_\Sigma=0.
\end{aligned}
\end{equation}

Now we assume that $\phi$ is a non-zero section satisfying $|\phi|_h^2 \leq 1$, and
\begin{equation} \label{condition for r1 r2 s}
r_2 \geq 2, \quad \frac{(2r_1+1)\kappa_0+4r_1}{2(4r_2-\kappa_0)(2r_2+1)}<s<\frac{r_1(r_1+1)}{2r_2(r_2+1)},
\end{equation}
where
\[
\kappa_0:=\frac{1}{3} \bigg(2+\sqrt[3]{1232-528 \sqrt{3}}+2 \sqrt[3]{22(7+3 \sqrt{3})} \bigg) \approx 7.2405
\]
denotes the (unique) positive root of the polynomial $\kappa^3-2\kappa^2-28\kappa-72$. Indeed, we have already showed that the upper bound for $s$ is necessary. From \eqref{formula for c}, one can easily check that the lower bound for $s$ means
\begin{equation} \label{lower bound for 4cr2 minus 1}
4cr_2-1>\kappa_0 c.
\end{equation}
In particular, we have
\[
4cr_2+2c|\phi|_h^2-1>\kappa_0 c, \quad 4cr_2-2c|\phi|_h^2-1+4c>(\kappa_0+2)c.
\]
Thus combining \eqref{hfS} with \eqref{fS}, we obtain
\begin{equation} \label{equation for f}
F_f+r_1 \omega_\Sigma=\frac{4c r_2+2c |\phi|_h^2-1}{4c(1-|\phi|_h^2)} F_h+\frac{s}{2c} \omega_\Sigma.
\end{equation}
Substituting \eqref{equation for f} to \eqref{fS}, we have
\begin{equation}
\begin{aligned} \label{J-vortex equation}
F_h=2(1-|\phi|_h^2) \frac{\frac{\i c^2}{\pi} D_h' \phi D_h'' \phi^\ast+s \omega_\Sigma}{(4cr_2-2c|\phi|_h^2-1+4c)(4cr_2+2c|\phi|_h^2-1)}.
\end{aligned}
\end{equation}
We will refer to \eqref{J-vortex equation} as the $J$-vortex equation. We note that the derivative $D_h' \phi D_h'' \phi^\ast$ can be computed by using the Weitzenb\"ock type formula
\begin{equation} \label{Weitzenbock type formula}
\frac{\i}{2 \pi} \p \bp |\phi|_h^2=-F_h |\phi|_h^2+\frac{\i}{2 \pi} D_h' \phi D_h'' \phi^\ast.
\end{equation}

\begin{lem}
If the equation \eqref{J-vortex equation} has a smooth solution $h$ satisfying $|\phi|_h^2 \leq 1$, then the equation \eqref{equation for f} has a smooth solution $f$ unique up to additive constants.
\end{lem}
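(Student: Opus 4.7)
Setting $u := \log f$ so that $F_f = -\frac{\i}{2\pi}\p\bp u$, the equation \eqref{equation for f} becomes the scalar Poisson-type equation
\[
-\frac{\i}{2\pi}\p\bp u = G, \qquad G := \frac{4cr_2+2c|\phi|_h^2-1}{4c(1-|\phi|_h^2)}F_h + \left(\frac{s}{2c}-r_1\right)\omega_\Sigma
\]
on the compact Riemann surface $\Sigma$. The first step is to check that $G$ is smooth: although it has an apparent pole where $|\phi|_h^2 = 1$, substituting \eqref{J-vortex equation} for $F_h$ cancels the factor $1-|\phi|_h^2$ and gives
\[
\frac{4cr_2+2c|\phi|_h^2-1}{4c(1-|\phi|_h^2)}F_h = \frac{\frac{\i c^2}{\pi} D_h'\phi D_h''\phi^\ast + s\omega_\Sigma}{2c(4cr_2-2c|\phi|_h^2-1+4c)},
\]
whose denominator is uniformly positive by \eqref{lower bound for 4cr2 minus 1} together with $|\phi|_h^2 \leq 1$; hence $G$ is a smooth real $(1,1)$-form on $\Sigma$.

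By the standard $\p\bp$-lemma on the compact Riemann surface $\Sigma$, such a Poisson equation admits a smooth real solution $u$, unique up to an additive real constant, if and only if the cohomological condition $\int_\Sigma G = 0$ holds; this gives $f=e^u$ as a smooth positive function, unique up to a multiplicative positive constant (equivalently, $\log f$ unique up to additive constants, as stated). Uniqueness itself follows because pluriharmonic functions on a compact complex manifold are constants, i.e.\ the kernel of $\p\bp$ on $C^\infty(\Sigma;\R)$ is $\R$.

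What remains is to verify $\int_\Sigma G = 0$. I would integrate \eqref{equation for f} over $\Sigma$ and use $\int_\Sigma F_f = 0$ (since $f$ lives on the trivial line bundle), $\int_\Sigma F_h = \deg L = 1$ and $\int_\Sigma\omega_\Sigma = 1$; then substitute \eqref{J-vortex equation} and apply the Weitzenb\"ock identity \eqref{Weitzenbock type formula} to trade $\frac{\i}{2\pi}D_h'\phi D_h''\phi^\ast$ for $\frac{\i}{2\pi}\p\bp|\phi|_h^2 + F_h|\phi|_h^2$. The $\p\bp$-exact piece then disappears under an integration by parts against the $|\phi|_h^2$-dependent denominator, and the remaining identity reduces, after algebraic simplification, to the defining formula \eqref{formula for c} for $c$. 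Conceptually this is automatic: the pair (\eqref{J-vortex equation}, \eqref{equation for f}) is an invertible linear rearrangement of the system (\eqref{hfS}, \eqref{fS}), which is the $\SU(2)$-invariant reduction of the globally defined $J$-equation on $E\to\Sigma\times\C\P^1$, so the cohomological compatibility that pinned $c$ in \eqref{formula for c} is exactly what forces $\int_\Sigma G$ to vanish. I expect the main (though purely mechanical) obstacle to be the explicit book-keeping of this integration in the presence of the nonlinear $|\phi|_h^2$-denominator, rather than any conceptual difficulty.
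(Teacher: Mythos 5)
Your reduction to a scalar Poisson-type equation on $\Sigma$, the removable-singularity check for the right-hand side via \eqref{J-vortex equation} and \eqref{lower bound for 4cr2 minus 1}, and the uniqueness-up-to-constants statement are all fine and agree with the framework the paper uses implicitly. The gap is in the only nontrivial point of the lemma, namely the verification that the right-hand side of \eqref{equation for f} integrates to $r_1$ (equivalently $\int_\Sigma G=0$), which you leave as a sketch whose two suggested mechanisms both fail as stated. First, ``conceptually automatic'' is not right: the normalization \eqref{formula for c} of $c$ makes the integral of the \emph{sum} \eqref{hfS}$+$\eqref{fS} vanish for purely topological reasons (it is the trace of the $J$-equation, valid for any metric in the ansatz), but \eqref{equation for f} arises from the \emph{difference} of \eqref{hfS} and \eqref{fS} divided by the non-constant function $4c(1-|\phi|_h^2)$, so its integral condition has no a priori cohomological interpretation and genuinely uses the hypothesis that $h$ solves \eqref{J-vortex equation}. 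Second, if you substitute \eqref{J-vortex equation} into the integrand first and then invoke \eqref{Weitzenbock type formula}, the $\p\bp|\phi|_h^2$ term sits over the $|\phi|_h^2$-dependent denominator $K:=2c(4cr_2-2c|\phi|_h^2-1+4c)$, and it does \emph{not} disappear by integration by parts: $\int_\Sigma K^{-1}\,\p\bp|\phi|_h^2=\int_\Sigma |\phi|_h^2\,\p\bp(K^{-1})$, which is nonzero in general and reintroduces $F_h$ and $\p|\phi|_h^2\wedge\bp|\phi|_h^2$, so the computation does not close up in the order you propose.

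The paper's computation closes by reversing the order: integrate \eqref{Weitzenbock type formula} over $\Sigma$ \emph{first}, so the exact term drops for free and $\int_\Sigma F_h|\phi|_h^2=\frac{\i}{2\pi}\int_\Sigma D_h'\phi D_h''\phi^\ast$; then substitute \eqref{J-vortex equation} solved for $\frac{\i}{2\pi}D_h'\phi D_h''\phi^\ast$, expand $(4cr_2-2c|\phi|_h^2-1+4c)(4cr_2+2c|\phi|_h^2-1)=(4cr_2+2c-1)^2-4c^2(1-|\phi|_h^2)^2$, and use $\int_\Sigma F_h=\deg L=1$ to obtain the key identity $2s+4c^2=(4cr_2+2c-1)^2\int_\Sigma\frac{F_h}{1-|\phi|_h^2}$. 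Writing $\frac{4cr_2+2c|\phi|_h^2-1}{4c(1-|\phi|_h^2)}=-\frac12+\frac{4cr_2+2c-1}{4c(1-|\phi|_h^2)}$ then evaluates the integral of the right-hand side of \eqref{equation for f} as $-\frac12+\frac{s+2c^2}{2c(4cr_2+2c-1)}+\frac{s}{2c}$, which equals $r_1$ by \eqref{formula for c}. Supplying this (or an equivalent correct bookkeeping) is what your proposal is missing.
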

\begin{proof}
The equation \eqref{J-vortex equation} can be written as\footnote{We remark that the form $\frac{F_h}{1-|\phi|_h^2}$ is smooth on $X$ since the form $F_h$ can be divided by $(1-|\phi|_h^2)$.}
\[
\frac{\i}{2 \pi} D_h' \phi D_h'' \phi^\ast=\frac{(4cr_2-2c|\phi|_h^2-1+4c)(4cr_2+2c|\phi|_h^2-1)}{4c^2(1-|\phi|_h^2)} F_h-\frac{s}{2c^2} \omega_\Sigma.
\]
Combining with \eqref{Weitzenbock type formula}, we have
\[
\begin{aligned}
\int_\Sigma F_h |\phi|_h^2&=\frac{\i}{2 \pi} \int_\Sigma D_h' \phi D_h'' \phi^\ast \\
&=\int_\Sigma \frac{(4cr_2-2c|\phi|_h^2-1+4c)(4cr_2+2c|\phi|_h^2-1)}{4c^2(1-|\phi|_h^2)} F_h-\frac{s}{2c^2} \int_\Sigma \omega_\Sigma,
\end{aligned}
\]
and hence
\[
2s+4c^2=(4cr_2+2c-1)^2 \int_\Sigma \frac{F_h}{1-|\phi|_h^2}.
\]
In order to solve \eqref{equation for f}, it suffices to check that the integrals of both hand sides are equal. Indeed, we have
\[
\int_\Sigma(F_f+r_1 \omega_\Sigma)=r_1,
\]
\[
\begin{aligned}
\int_\Sigma \frac{4c r_2+2c |\phi|_h^2-1}{4c(1-|\phi|_h^2)} F_h+\frac{s}{2c} \int_\Sigma \omega_\Sigma&=\int_\Sigma \bigg( -\frac{1}{2}+\frac{4cr_2+2c-1}{4c(1-|\phi|_h^2)} \bigg) F_h+\frac{s}{2c} \\
&=-\frac{1}{2}+\frac{4cr_2+2c-1}{4c} \int_\Sigma \frac{F_h}{1-|\phi|_h^2}+\frac{s}{2c} \\
&=-\frac{1}{2}+\frac{s+2c^2}{2c(4cr_2+2c-1)}+\frac{s}{2c} \\
&=\frac{4r_2s-4cr_2+2s+1}{2(4cr_2+2c-1)} \\
&=r_1.
\end{aligned}
\]
This completes the proof.
\end{proof}

\begin{lem}
If $h$ and $f$ are the solutions to \eqref{J-vortex equation} and \eqref{equation for f} respectively, then the corresponding Hermitian metric $H$ is $J$-Griffiths positive.
\end{lem}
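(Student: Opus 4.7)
The strategy is to translate $J$-Griffith positivity, a pointwise strict positivity condition on the $\End(E_x)$-valued pairing $-\i(2cF_H-\omega\Id_E)(\xi,\bar\xi)$ for each $x\in X$ and $\xi\in T'_xX\setminus\{0\}$, into the standard Sylvester criterion for a $2\times 2$ Hermitian matrix in an orthonormal frame adapted to the direct sum $E=E_1\oplus E_2$, where $E_1:=\pi_1^\ast((r_1+1)L)\otimes\pi_2^\ast(r_2\cO(2))$ and $E_2:=\pi_1^\ast(r_1L)\otimes\pi_2^\ast((r_2+1)\cO(2))$. Since both summands are line bundles, the matrix has only two diagonal entries and the modulus of one off-diagonal entry to control.

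First I would substitute \eqref{equation for f} into the diagonal blocks of \eqref{computation for FH}. Using $\omega=s\pi_1^\ast\omega_\Sigma+\pi_2^\ast\omega_{\FS}$, one observes that the $\omega_\Sigma$ pieces cancel exactly, yielding
\[
(2cF_H-\omega\Id_E)_{11}=\frac{\beta}{2(1-|\phi|_h^2)}F_h+\alpha\,\omega_{\FS},\quad (2cF_H-\omega\Id_E)_{22}=\frac{\alpha}{2(1-|\phi|_h^2)}F_h+\beta\,\omega_{\FS},
\]
with $\alpha:=4cr_2+2c|\phi|_h^2-1$ and $\beta:=4cr_2+4c-2c|\phi|_h^2-1$; both are strictly positive thanks to \eqref{lower bound for 4cr2 minus 1} and $|\phi|_h^2\leq 1$. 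Plugging the $J$-vortex equation \eqref{J-vortex equation} into $F_h/(1-|\phi|_h^2)$ converts the two diagonal blocks into the symmetric reciprocal form
\[
\frac{1}{\alpha}\bigg(\frac{\i c^2}{\pi}D_h'\phi\,D_h''\phi^\ast+s\omega_\Sigma\bigg)+\alpha\,\omega_{\FS}\qquad\text{and}\qquad\frac{1}{\beta}\bigg(\frac{\i c^2}{\pi}D_h'\phi\,D_h''\phi^\ast+s\omega_\Sigma\bigg)+\beta\,\omega_{\FS},
\]
while the off-diagonal block $(2cF_H)_{12}=(\i c/\pi)\,D_h'\phi\wedge\zeta$ is left intact.

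Next, at a point $(p,q)\in\Sigma\times\C\P^1$ I would split $\xi=\xi_1+\xi_2\in T'_p\Sigma\oplus T'_q\C\P^1$ and introduce the non-negative scalars
\[
P:=\tfrac{c^2}{\pi}|(D_h'\phi)(\xi_1)|_h^2,\quad s_0:=-\i\omega_\Sigma(\xi_1,\bar\xi_1),\quad f:=-\i\omega_{\FS}(\xi_2,\bar\xi_2).
\]
Reading off in an orthonormal frame of $(E_1)_{(p,q)}\oplus(E_2)_{(p,q)}$, the Hermitian matrix $N:=-\i(2cF_H-\omega\Id_E)(\xi,\bar\xi)$ has diagonal entries
\[
N_{11}=\frac{P+ss_0}{\alpha}+\alpha f,\qquad N_{22}=\frac{P+ss_0}{\beta}+\beta f,
\]
each strictly positive for $\xi\neq 0$ (since $s>0$ forces at least one of $ss_0$ or $f$ to be positive). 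A short coordinate computation based on $\zeta=\sigma/(1+|w|^2)^2\otimes d\bar w$ and $h_{\FS}^{-2}(\sigma,\sigma)=(1+|w|^2)^2$ identifies $|\zeta(\bar\xi_2)|^2_{h_{\FS}^{-2}}=2\pi f$, which in turn delivers the crucial identity
\[
|N_{12}|^2=\tfrac{c^2}{\pi^2}|(D_h'\phi)(\xi_1)|_h^2\cdot|\zeta(\bar\xi_2)|^2_{h_{\FS}^{-2}}=2Pf.
\]

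Finally, I would verify the determinant condition. Expanding and applying AM-GM to $\beta/\alpha+\alpha/\beta\geq 2$ gives
\[
N_{11}N_{22}-|N_{12}|^2\geq\frac{(P+ss_0)^2}{\alpha\beta}+2(P+ss_0)f+\alpha\beta f^2-2Pf=\frac{(P+ss_0)^2}{\alpha\beta}+2ss_0f+\alpha\beta f^2\geq 0,
\]
with strict inequality in two cases covering every non-zero $\xi$: either $\xi_1\neq 0$ (so $s_0>0$ forces $(P+ss_0)^2/(\alpha\beta)>0$), or $\xi_1=0$ and $\xi_2\neq 0$ (so $f>0$ forces $\alpha\beta f^2>0$). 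I expect the main obstacle to be purely algebraic: the diagonal blocks of $2cF_H-\omega\Id_E$ must assemble, after imposing \eqref{equation for f} and \eqref{J-vortex equation}, into the reciprocal pair $(P+ss_0)/\alpha+\alpha f$ and $(P+ss_0)/\beta+\beta f$ — it is precisely this symmetric factorization that allows AM-GM to sharpen the cross term into $2(P+ss_0)f$, tight against $|N_{12}|^2=2Pf$ up to the positive leftover $2ss_0f+(P+ss_0)^2/(\alpha\beta)+\alpha\beta f^2$.
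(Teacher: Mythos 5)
Your proposal is correct and takes essentially the same route as the paper: both substitute \eqref{hfS}--\eqref{fS} (equivalently \eqref{equation for f} together with \eqref{J-vortex equation}) to rewrite the diagonal blocks of $2cF_H-\omega \Id_E$ in the reciprocal form over the factors $4cr_2+2c|\phi|_h^2-1$ and $4cr_2-2c|\phi|_h^2-1+4c$, and then verify a $2\times 2$ positivity criterion in which the off-diagonal second fundamental form term is absorbed. The only organizational difference is that you fix $\xi$ and apply Sylvester's criterion in $v$, closing the determinant step with AM--GM and the identity $|N_{12}|^2=2Pf$, whereas the paper fixes $v$ and checks positivity of the $(1,1)$-form $v^\ast(2cF_H-\omega \Id_E)v$ together with its square, which amounts to the same determinant inequality.
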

\begin{proof}
At each point $x \in X$, we identify $v \in E_x$ with a $2$-vector $\begin{pmatrix} v_1 \\ v_2 \end{pmatrix} \in \C^2$ by using an $H$-orthonormal frame of $E$. Then we may show that $v^\ast (2cF_H-\omega \Id_E) v$ is a positive $(1,1)$-form at any points $x \in X$ for all non-zero $v \in E_x$. We compute
\[
\begin{aligned}
& v^\ast (2c F_H-\omega \Id_E) v \\
&=2c \begin{pmatrix} \overline{v_1} & \overline{v_2} \end{pmatrix}
\begin{pmatrix}
F_h+F_f+r_1 \omega_\Sigma+2r_2 \omega_{\FS}+|\phi|_h^2 \omega_{\FS} & \frac{\i}{2\pi} D'_h \phi \wedge \zeta \\
-\frac{\i}{2\pi} D''_h \phi^\ast \wedge \zeta^\ast & F_f+r_1 \omega_\Sigma+(2r_2+2) \omega_{\FS}-|\phi|_h^2 \omega_{\FS}
\end{pmatrix}
\begin{pmatrix} v_1 \\ v_2 \end{pmatrix} \\
& -(s \omega_\Sigma+\omega_{\FS})(|v_1|^2+|v_2|^2) \\
&=2c |v_1|^2 (F_h+F_f+r_1 \omega_\Sigma+2r_2 \omega_{\FS}+|\phi|_h^2 \omega_{\FS})+2c |v_2|^2 (F_f+r_1 \omega_\Sigma+(2r_2+2) \omega_{\FS}-|\phi|_h^2 \omega_{\FS}) \\
&+\overline{v_1} v_2 \frac{\i c}{\pi} D'_h \phi \wedge \zeta-\overline{v_2} v_1 \frac{\i c}{\pi} D''_h \phi^\ast \wedge \zeta^\ast-(s \omega_\Sigma+\omega_{\FS}) (|v_1|^2+|v_2|^2) \\
&=|v_1|^2 \big(2cF_h+2cF_f+(2c r_1-s) \omega_\Sigma \big)+|v_2|^2 \big( 2cF_f+(2cr_1-s) \omega_\Sigma \big)+\omega_{\FS} \big( |v_1|^2 (4c r_2+2c|\phi|_h^2-1)\\ 
& +|v_2|^2(4cr_2-2c|\phi|_h^2-1+4c) \big) \\
&+\overline{v_1} v_2 \frac{\i c}{\pi} D'_h \phi \wedge \zeta-\overline{v_2} v_1 \frac{\i c}{\pi} D''_h \phi^\ast \wedge \zeta^\ast.
\end{aligned}
\]
Thus the form $v^\ast (2c F_H-\omega \Id_E) v$ is positive for all $v \neq 0$ if and only if
\begin{align}
2cF_h+2cF_f+(2c r_1-s) \omega_\Sigma>0, \label{p I} \\
2cF_f+(2cr_1-s) \omega_\Sigma>0, \label{p II} \\
4cr_2+2c|\phi|_h^2-1>0, \label{p III} \\
4cr_2-2c|\phi|_h^2-1+4c>0, \label{p IV}
\end{align}
and
\begin{equation} \label{p V}
\begin{aligned}
0&<\frac{\big( v^\ast (2c F_H-\omega \Id_E) v \big)^2}{2} \\
&=\omega_{\FS} \bigg[ |v_1|^4 (4cr_2+2c |\phi|_h^2-1)(2cF_h+2cF_f+(2cr_1-s)\omega_\Sigma) \\
&+|v_2|^4(4cr_2-2c|\phi|_h^2-1+4c)(2cF_f+(2cr_1-s)\omega_\Sigma) \\
&+|v_1 v_2|^2 \bigg(2c(4cr_2-2c|\phi|_h^2-1+4c)F_h+2(4cr_2+2c-1) \big(2cF_f+(2cr_1-s)\omega_\Sigma \big) \\
&-\frac{2c^2 \i}{\pi}D_h' \phi D_h'' \phi^\ast \bigg) \bigg],
\end{aligned}
\end{equation}
where \eqref{p I}, \eqref{p II} are inequalities as $(1,1)$-forms on $\Sigma$. We have already observed that the conditions \eqref{p III} and \eqref{p IV} are satisfied from our choice of $r_1,r_2,s$. Moreover, from \eqref{hfS} and \eqref{fS}, we have
\[
2cF_h+2cF_f+(2c r_1-s) \omega_\Sigma=\frac{\frac{\i c^2}{\pi} D_h' \phi D_h'' \phi^\ast+s \omega_\Sigma}{4cr_2+2c|\phi|_h^2-1}>0,
\]
\[
2cF_f+(2cr_1-s)\omega_\Sigma=\frac{\frac{\i c^2}{\pi} D_h' \phi D_h'' \phi^\ast+s \omega_\Sigma}{4cr_2-2c|\phi|_h^2-1+4c}>0
\]
on $\Sigma$, so the conditions \eqref{p I} and \eqref{p II} are satisfied. Finally, we will check \eqref{p V}. From the above observations, we know that the coefficients of $|v_1|^4$ and $|v_2|^4$ are positive. So we will prove that the coefficient of $|v_1v_2|^2$ is non-negative. Indeed, we have
\[
\begin{aligned}
& 2c(4cr_2-2c|\phi|_h^2-1+4c)F_h+2(4cr_2+2c-1) \big(2cF_f+(2cr_1-s)\omega_\Sigma \big)-\frac{2c^2 \i}{\pi}D_h' \phi D_h'' \phi^\ast \\
&=4c(1-|\phi|_h^2) \frac{\frac{\i c^2}{\pi} D_h' \phi D_h'' \phi^\ast+s \omega_\Sigma}{4cr_2+2c|\phi|_h^2-1}+2(4cr_2+2c-1) \cdot \frac{\frac{\i c^2}{\pi} D_h' \phi D_h'' \phi^\ast+s \omega_\Sigma}{4cr_2-2c|\phi|_h^2-1+4c} \\
&-\frac{2c^2 \i}{\pi}D_h' \phi D_h'' \phi^\ast \\
&\geq 4c(1-|\phi|_h^2) \frac{\frac{\i c^2}{\pi} D_h' \phi D_h'' \phi^\ast}{4cr_2+2c|\phi|_h^2-1}+2(4cr_2+2c-1) \cdot \frac{\frac{\i c^2}{\pi} D_h' \phi D_h'' \phi^\ast}{4cr_2-2c|\phi|_h^2-1+4c}-\frac{2c^2 \i}{\pi}D_h' \phi D_h'' \phi^\ast \\
&=\frac{16c^4(1-|\phi|_h^2)^2}{(4cr_2+2c|\phi|_h^2-1)(4cr_2-2c|\phi|_h^2-1+4c)} \cdot \frac{\i}{\pi} D_h' \phi D_h'' \phi^\ast \\
& \geq 0.
\end{aligned}
\]
Hence the metric $H$ is $J$-Griffiths positive with respect to $\omega$.
\end{proof}

\subsubsection{Continuity method}
In order to obtain a solution to \eqref{J-vortex equation}, we use the continuity method as follows: multiplying a scaling factor, we may always assume that $|\phi|_{h_\Sigma}^2<\frac{1}{2}$. Let $\cT \subset [0,1]$ be the set of all $t$ such that the following equation has a smooth solution $h_t:=h_\Sigma e^{-\psi_t}$
\begin{equation} \label{continuity path}
F_{h_t}=\omega_\Sigma+\frac{\i}{2 \pi} \p \bp \psi_t=2(1-|\phi|_{h_t}^2) \frac{\frac{\i c^2 t}{\pi} D'_t \phi D''_t \phi^{\ast_t}+su^{1-t} \omega_\Sigma}{(4cr_2-2ct|\phi|_{h_t}^2-1+4c)(4cr_2+2ct|\phi|_{h_t}^2-1)},
\end{equation}
where $\ast_t$, $D'_t$ and $D''_t$ are taken with respect to $h_t$ and
\[
u:=\frac{1}{\a(1-|\phi|_{h_\Sigma}^2)}, \quad \a:=\frac{2s}{(4cr_2-1+4c)(4cr_2-1)}.
\]
One can check that there is a unique solution $\psi_0=0$ when $t=0$ so that $0 \in \cT$. To simplify computations, we often write
\[
I=I(\psi,t):=4cr_2+2ct|\phi|_h^2-1, \quad K=K(\psi,t):=4cr_2-2ct|\phi|_h^2-1+4c,
\]
\[
J=J(\psi,t):=\frac{\frac{\i c^2 t}{\pi} D'_h \phi D''_h \phi^{\ast_h}+su^{1-t} \omega_\Sigma}{IK},
\]
where $h=h_\Sigma e^{-\psi}$. Then we have
\[
F=2(1-|\phi|_h^2)J.
\]
To compute the linearization of \eqref{continuity path}, we need to prove some lemmas.
\begin{lem} \label{linearization of D phi D phi ast}
The linearization of $D'_h \phi D''_h \phi^\ast$ in the direction $v \in C^\infty(\Sigma;\R)$ is given by
\[
\frac{\i}{2 \pi} \d_v ( D'_h \phi D''_h \phi^\ast)=-v \frac{\i}{2 \pi} D'_h \phi D''_h \phi^\ast-\frac{\i}{2 \pi} \p v \wedge \bp |\phi|_h^2-\frac{\i}{2 \pi} \p |\phi|_h^2 \wedge \bp v.
\]
\begin{proof}
By using \eqref{Weitzenbock type formula}, we obtain
\[
\begin{aligned}
\frac{\i}{2 \pi} \d_v ( D'_h \phi D''_h \phi^\ast)&=\d_v (F_h |\phi|_h^2)+\frac{\i}{2 \pi}\p \bp \d_v |\phi|_h^2 \\
&=|\phi|_h^2 \frac{\i}{2 \pi}\p \bp v-F_h |\phi|_h^2 v-\frac{\i}{2 \pi} \p \bp (|\phi|_h^2 v) \\
&=-F_h |\phi|_h^2 v-v \frac{\i}{2 \pi}\p \bp |\phi|_h^2-\frac{\i}{2 \pi} \p v \wedge \bp |\phi|_h^2-\frac{\i}{2 \pi} \p |\phi|_h^2 \wedge \bp v \\
&=-v \frac{\i}{2 \pi} D'_h \phi D''_h \phi^\ast-\frac{\i}{2 \pi} \p v \wedge \bp |\phi|_h^2-\frac{\i}{2 \pi} \p |\phi|_h^2 \wedge \bp v.
\end{aligned}
\]
\end{proof}
\end{lem}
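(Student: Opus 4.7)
The cleanest route is to bypass direct computation of $\d_v D'_h \phi$ and $\d_v D''_h \phi^{\ast}$ (which would require tracking how the Chern connection itself varies with $h$) and instead leverage the Weitzenb\"ock identity \eqref{Weitzenbock type formula}
\[
\frac{\i}{2\pi} D'_h \phi D''_h \phi^\ast = F_h |\phi|_h^2 + \frac{\i}{2\pi} \p\bp |\phi|_h^2,
\]
which expresses the quantity of interest in terms of objects whose dependence on $h$ is transparent. Parametrizing $h = h_\Sigma e^{-\psi}$ so that the direction $v$ corresponds to $\d_v \psi = v$, one reads off immediately
\[
\d_v F_h = \tfrac{\i}{2\pi} \p\bp v, \qquad \d_v |\phi|_h^2 = -v |\phi|_h^2.
\]

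Applying $\d_v$ to both sides of the Weitzenb\"ock identity and substituting yields
\[
\frac{\i}{2\pi} \d_v\bigl(D'_h \phi D''_h \phi^\ast\bigr) = \frac{\i}{2\pi}(\p\bp v) |\phi|_h^2 - v F_h |\phi|_h^2 + \frac{\i}{2\pi} \p\bp\bigl(-v |\phi|_h^2\bigr).
\]
I would then expand the last term using the Leibniz rule
\[
\p\bp(fg) = (\p\bp f)g + \p f \wedge \bp g + \p g \wedge \bp f + f\, \p\bp g.
\]
The two $(\p\bp v)|\phi|_h^2$ contributions cancel, while the leftover piece $-v\bigl(F_h |\phi|_h^2 + \tfrac{\i}{2\pi}\p\bp |\phi|_h^2\bigr)$ collapses by a second invocation of the Weitzenb\"ock identity to $-v \cdot \tfrac{\i}{2\pi} D'_h \phi D''_h \phi^\ast$. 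The two surviving cross terms $-\tfrac{\i}{2\pi}(\p v \wedge \bp|\phi|_h^2 + \p|\phi|_h^2 \wedge \bp v)$ are precisely the remaining pieces of the claim.

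The argument is pure bookkeeping once the Weitzenb\"ock identity is used twice; no substantive obstacle is anticipated, and the main caution is simply tracking signs in the $\p\bp$-Leibniz rule, since $\p f$ and $\bp g$ are both $1$-forms and anti-commute.
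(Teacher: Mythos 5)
Your proposal is correct and is essentially identical to the paper's proof: both apply $\d_v$ to the Weitzenb\"ock identity \eqref{Weitzenbock type formula}, use $\d_v F_h=\frac{\i}{2\pi}\p\bp v$ and $\d_v|\phi|_h^2=-v|\phi|_h^2$, expand $\p\bp(v|\phi|_h^2)$ by Leibniz so the $|\phi|_h^2\,\p\bp v$ terms cancel, and then reassemble the leftover $-v\big(F_h|\phi|_h^2+\frac{\i}{2\pi}\p\bp|\phi|_h^2\big)$ via a second use of the identity.
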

In particular, the above lemma shows that \eqref{J-vortex equation} as well as \eqref{continuity path} are elliptic since the second derivative of $v$ only appears from $\d_v F_h=\frac{\i}{2 \pi} \p \bp v$ in their linearized operators.

\begin{lem}
We have
\[
\p |\phi|_h^2 \wedge \bp |\phi|_h^2=|\phi|_h^2 D'_h \phi \wedge D''_h \phi^\ast.
\]
\end{lem}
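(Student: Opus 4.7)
The identity is local, and the plan is to verify it in a local holomorphic frame $e$ of $L$ with $h(e,e)=e^{-\varphi}$, writing $\phi=fe$ for a local holomorphic function $f$. The crux is to compute both sides in this frame using the Chern-connection formulas on $L$ and $L^\ast$: one has $D_h'\phi=(\p f-f\p\varphi)e$, and from $\phi^\ast=\bar f e^{-\varphi}e^\ast$ together with $D''e^\ast=0$ one obtains
\[
D_h''\phi^\ast=e^{-\varphi}(\bp\bar f-\bar f\bp\varphi)\, e^\ast.
\]
Metric compatibility of the Chern connection combined with $D_h''\phi=0$ gives $\p|\phi|_h^2=h(D_h'\phi,\phi)=\bar f e^{-\varphi}(\p f-f\p\varphi)$ and the conjugate formula $\bp|\phi|_h^2=f e^{-\varphi}(\bp\bar f-\bar f\bp\varphi)$.

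Putting these together, both sides reduce to the scalar $(1,1)$-form $|f|^2 e^{-2\varphi}(\p f-f\p\varphi)\wedge(\bp\bar f-\bar f\bp\varphi)$. For the left-hand side this is immediate from the two displays above. For the right-hand side one uses the canonical evaluation pairing $L\otimes L^\ast\to\C$ to see that $D_h'\phi\wedge D_h''\phi^\ast=e^{-\varphi}(\p f-f\p\varphi)\wedge(\bp\bar f-\bar f\bp\varphi)$, and multiplication by $|\phi|_h^2=|f|^2 e^{-\varphi}$ yields the same expression. The only thing to be careful about is fixing conventions for the adjoint $\phi^\ast$ and the induced Chern connection on the dual bundle $L^\ast$; once these are in place the argument is a mechanical local check, and it is internally consistent with the Weitzenb\"ock-type formula \eqref{Weitzenbock type formula} recorded just above, whose cross term $\frac{\i}{2\pi}D_h'\phi\, D_h''\phi^\ast$ is precisely the same factor $e^{-\varphi}(\p f-f\p\varphi)\wedge(\bp\bar f-\bar f\bp\varphi)$.
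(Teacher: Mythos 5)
Your computation is correct, and it reaches the identity by a genuinely different (though equally elementary) route from the paper. You fix a local holomorphic frame $e$ with $h(e,e)=e^{-\varphi}$, write $\phi=fe$, and verify by hand that both sides equal $|f|^2e^{-2\varphi}(\p f-f\p\varphi)\wedge(\bp \bar f-\bar f\bp\varphi)$; all the intermediate formulas ($D_h'\phi=(\p f-f\p\varphi)e$, $D_h''\phi^\ast=e^{-\varphi}(\bp\bar f-\bar f\bp\varphi)e^\ast$, $\p|\phi|_h^2=h(D_h'\phi,\phi)$, and the evaluation pairing $L\otimes L^\ast\to\C$ hidden in the notation $D_h'\phi\wedge D_h''\phi^\ast$) are right. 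The paper instead argues invariantly on the Zariski open set $\{\phi\neq 0\}$: it introduces the section $\eta$ of $L$ with $h(\eta,\phi)=1$, shows $(D_h'\phi^\ast)(\eta)=0$ by metric compatibility (so $D_h'\phi^\ast=0$ there, since $\eta$ is a frame), and then uses the Leibniz rule together with $D_h''\phi=0$ to write $\p|\phi|_h^2=D_h'\phi\cdot\phi^\ast$ and $\bp|\phi|_h^2=D_h''\phi^\ast\cdot\phi$, from which the identity follows by rearranging the line-bundle-valued factors; equality at the zeros of $\phi$ then follows by continuity. Your frame computation buys directness (it holds at every point, zeros of $\phi$ included, with no density argument) at the cost of having to fix conventions for $\phi^\ast$ and the dual Chern connection, which you do correctly; the paper's argument buys frame-independence and makes transparent that the only inputs are metric compatibility and holomorphicity of $\phi$.
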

\begin{proof}
Let $\eta$ be the section of $L$ satisfying $h(\eta,\phi)=1$ on a Zariski open subset $\{\phi \neq 0\}$. Then we have
\[
(D'_h \phi^\ast)(\eta)=-\phi^\ast (D'_h \eta)=-h(D'_h \eta, \phi)=h(\eta, D''_h \phi)=0.
\]
This together with $D_h'' \phi=0$ implies that
\[
\p |\phi|_h^2 \wedge \bp |\phi|_h^2=D_h' \phi \cdot \phi^\ast \wedge D''_h \phi^\ast \cdot \phi=|\phi|_h^2 D'_h \phi \wedge D''_h \phi^\ast.
\]
\end{proof}
Finally, we will show that the condition $|\phi|_{h_t}^2 \leq 1$ is preserved along the path \eqref{continuity path}.
\begin{lem}
We have $|\phi|_{h_t}^2 \leq 1$ for all $t \in \cT$.
\end{lem}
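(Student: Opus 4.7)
The plan is to prove the bound directly at each $t\in\cT$ by a maximum principle argument exploiting the explicit structure of \eqref{continuity path}. First I would clear the denominator and record the polynomial identity
\[
IK\cdot F_{h_t}=2(1-|\phi|_{h_t}^2)\Bigl(\tfrac{\i c^2 t}{\pi}D_t'\phi D_t''\phi^{\ast_t}+su^{1-t}\omega_\Sigma\Bigr),
\]
which holds pointwise on $\Sigma$ because both sides are smooth. The right-hand numerator is a strictly positive $(1,1)$-form thanks to the $su^{1-t}\omega_\Sigma$-summand, and $I\geq 4cr_2-1>0$ uniformly on $\Sigma$ by \eqref{lower bound for 4cr2 minus 1}.

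Next I would show $K>0$ throughout $\Sigma$ for every $t\in\cT$. If $K(x_0,t)=0$ at some $x_0$, the displayed identity forces $|\phi|_{h_t}^2(x_0)=1$, and the defining relation $K(x_0,t)=0$ then gives $2ct=4cr_2+4c-1$; but \eqref{lower bound for 4cr2 minus 1} yields $\tfrac{4cr_2+4c-1}{2c}>\tfrac{\kappa_0+4}{2}>1\geq t$, a contradiction. So $K(\cdot,t)$ is nowhere zero on $\Sigma$. Since $L$ has positive degree, the non-zero section $\phi$ vanishes at some $z_0\in\Sigma$, and there $K(z_0,t)=4cr_2+4c-1>0$; by connectedness of $\Sigma$, $K>0$ on all of $\Sigma$.

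Finally I would apply the maximum principle to $v:=|\phi|_{h_t}^2$ at a point $p$ realizing $M:=\max_\Sigma v$. The Weitzenb\"ock identity \eqref{Weitzenbock type formula} combined with $\tfrac{\i}{2\pi}\p\bp v(p)\leq 0$ gives
\[
F_{h_t}(p)\cdot M\geq \tfrac{\i}{2\pi}D_t'\phi D_t''\phi^{\ast_t}(p)\geq 0,
\]
and since $M>0$ this yields $F_{h_t}(p)\geq 0$ (as a non-negative multiple of $\omega_\Sigma$). On the other hand \eqref{continuity path} reads $F_{h_t}(p)=2(1-M)J(p)$ with $J(p)>0$, because $I(p),K(p)>0$ and the numerator is strictly positive. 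Comparing the two inequalities forces $1-M\geq 0$, i.e.\ $M\leq 1$, as required.

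The main obstacle is the second step, namely ruling out the degenerate locus $\{K=0\}$ on which the ellipticity of \eqref{continuity path} would fail; this is precisely where the lower bound on $s$ in \eqref{condition for r1 r2 s} enters, since it guarantees $\tfrac{4cr_2+4c-1}{2c}>1$ and hence that $K=0$ and $|\phi|^2=1$ cannot occur simultaneously for $t\in[0,1]$.
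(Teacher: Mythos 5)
Your argument is correct and is essentially the paper's proof: apply the Weitzenb\"ock formula \eqref{Weitzenbock type formula} at the maximum point of $|\phi|_{h_t}^2$ to get $F_{h_t}\geq 0$ there, then compare with \eqref{continuity path}, whose right-hand side is $2(1-|\phi|_{h_t}^2)$ times a positive $(1,1)$-form, to conclude $\max_\Sigma |\phi|_{h_t}^2 \leq 1$. Your preliminary step ruling out zeros of $K$ and fixing its sign (so that $J>0$ is legitimate without presupposing $|\phi|_{h_t}^2\leq 1$) is a reasonable extra precaution that makes explicit a positivity the paper leaves implicit in its choice of solution space.
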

\begin{proof}
We apply the formula \eqref{Weitzenbock type formula} to each $h_t$
\[
\frac{\i}{2 \pi} \p \bp |\phi|_{h_t}^2=-F_{h_t} |\phi|_{h_t}^2+\frac{\i}{2 \pi} D'_t \phi D''_t \phi^{\ast_t}.
\]
This shows that at the maximum point $\hat{x} \in \Sigma$ of $|\phi|_{h_t}^2$, we have
\[
0 \geq -F_{h_t} |\phi|_{h_t}^2
\]
at $\hat{x}$. Since $\phi$ is not identically zero, we have $F_{h_t}(\hat{x}) \geq 0$. Thus from \eqref{continuity path}, we have
\[
|\phi|_{h_t}^2 \leq |\phi|_{h_t}^2(\hat{x}) \leq 1
\]
as desired.
\end{proof}

\subsubsection{Openness}
We can prove the openness along the path \eqref{continuity path} in the similar way as \cite[Section 4]{Pin20}. However, we will give a detailed proof here since it is deeply committed to our choice of $\kappa_0$ (see Lemma \ref{kernel of the adjoint operator is trivial} for more details).

Assume $t_0 \in \cT$. For any $\ell \in \Z_{ \geq 0}$ and $\b \in (0,1)$, let $\cC^{\ell,\b}$ be an open subset of $C^{\ell,\b}(\Sigma;\R)$ consisting of all $C^{\ell,\b}$ functions $\psi$ satisfying $4cr_2-2c|\phi|_{h_\Sigma e^{-\psi}}^2-1+4c>0$. Set $h:=h_\Sigma e^{-\psi}$ and define an operator $\cP \colon \cC^{\ell+2,\b} \times [0,1] \to C^{\ell,\b}(\Sigma;\R)$ by
\[
\cP(\psi,t):=F_h-2(1-|\phi|_h^2) J.
\]
Then $\cP$ is a smooth map between Banach manifolds with $\cP(\psi_{t_0},t_0)=0$. If we prove the linearization $\d|_{(\psi_{t_0},t_0),v} \cP \colon C^{\ell+2,\b}(X;\R) \to C^{\ell,\b}(X;\R)$ is an isomorphism, then by the implicit function theorem of Banach manifolds, we know that $\cT$ is open near $t_0$. First we compute the derivative of $J$ in direction $v$ as
\[
\begin{aligned}
\d_v J&=\frac{\frac{\i c^2}{\pi} t \d_v (D_h' \phi D_h'' \phi^{\ast h}) IK-(\frac{\i c^2 t}{\pi} D_h' \phi D_h'' \phi^{\ast h}+su^{1-t} \omega_\Sigma) \d_v(IK)}{I^2K^2} \\
&=\frac{\i c^2t}{\pi} \frac{\d_v(D_h' \phi D_h'' \phi^{\ast h})}{IK}+\frac{8c^2tvJ |\phi|_h^2 (1-t |\phi|_h^2)}{IK},
\end{aligned}
\]
where we used
\[
\begin{aligned}
\d_v (IK)&=\d_v I \cdot K+I \cdot \d_v K \\
&=-2ct v |\phi|_h^2 \cdot K+I \cdot 2ct v |\phi|_h^2 K \\
&=-2ct v |\phi|_h^2(K-I) \\
&=-8c^2t v |\phi|_h^2(1-t|\phi|_h^2).
\end{aligned}
\]
Combining with Lemma \ref{linearization of D phi D phi ast}, we compute the linearization of $\cP$ as
\begin{equation} \label{linearization of P}
\begin{aligned}
\d_v \cP=\frac{\i}{2 \pi} \p \bp v-2|\phi|_h^2 v J-16c^2(1-|\phi|_h^2)(1-t|\phi|_h^2) \frac{t v |\phi|_h^2 J}{IK} \\
+\frac{2 \i c^2 t}{\pi} (1-|\phi|_h^2) \frac{v D'_h \phi D''_h \phi^{\ast_h}+\p v \wedge \bp |\phi|_h^2+\p |\phi|_h^2 \wedge \bp v}{IK}.
\end{aligned}
\end{equation}

\begin{lem}
We have $[0,\d) \subset \cT$ for some $\d>0$.
\end{lem}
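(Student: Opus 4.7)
The plan is to apply the implicit function theorem on Banach manifolds at the base point $(\psi_0,t_0)=(0,0)$, where we already know $\cP(0,0)=0$. By the scheme laid out just before the lemma statement, it suffices to verify that the linearization
\[
\d|_{(0,0),v}\cP \colon C^{\ell+2,\b}(\Sigma;\R) \to C^{\ell,\b}(\Sigma;\R)
\]
is an isomorphism; openness near $0$ then follows automatically.

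The first step is to specialize formula \eqref{linearization of P} at $\psi=0$, $t=0$. Every summand carrying an explicit factor of $t$ drops out, and at $h=h_\Sigma$ the quantity $J$ becomes
\[
J\big|_{(0,0)}=\frac{s u\,\omega_\Sigma}{IK}\bigg|_{(0,0)}=\frac{\omega_\Sigma}{2(1-|\phi|_{h_\Sigma}^2)},
\]
using $u=1/\big(\a(1-|\phi|_{h_\Sigma}^2)\big)$ and $\a=2s/(IK)|_{(0,0)}$. Hence
\[
\d|_{(0,0),v}\cP=\frac{\i}{2\pi}\p\bp v-\frac{|\phi|_{h_\Sigma}^2}{1-|\phi|_{h_\Sigma}^2}\,v\,\omega_\Sigma.
\]
This is an elliptic, formally self-adjoint second-order linear operator (with respect to $\omega_\Sigma$), whose zero-order coefficient is non-positive (and strictly negative wherever $\phi \neq 0$). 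Since $\Sigma$ is compact, the operator is Fredholm of index zero, so to conclude it is an isomorphism it suffices to show that its kernel is trivial.

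For the injectivity, suppose $\d|_{(0,0),v}\cP=0$. Pair with $v$ and integrate by parts on $\Sigma$ to obtain
\[
0=\int_\Sigma v\cdot\d|_{(0,0),v}\cP=-\int_\Sigma\frac{\i}{2\pi}\p v\wedge\bp v-\int_\Sigma\frac{|\phi|_{h_\Sigma}^2}{1-|\phi|_{h_\Sigma}^2}v^2\,\omega_\Sigma.
\]
Both terms on the right are non-positive, so both must vanish. The first forces $v$ to be locally constant, hence constant on $\Sigma$; the second, combined with the fact that $\phi$ is a non-zero holomorphic section and therefore $|\phi|_{h_\Sigma}^2>0$ on a non-empty open subset of $\Sigma$, then forces $v\equiv 0$. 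Therefore $\d|_{(0,0)}\cP$ is bijective, and the implicit function theorem for Banach manifolds yields a smooth family of solutions $\psi_t$ to \eqref{continuity path} for $t\in[0,\d)$ with $\d>0$ sufficiently small. Elliptic regularity upgrades each $\psi_t$ from $C^{\ell+2,\b}$ to $C^\infty$, completing the proof.

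The only mildly delicate point is the simplification of \eqref{linearization of P} at $t=0$; once that is observed, the operator is the standard ``Laplacian minus a non-negative potential'' on a compact surface, and triviality of its kernel is straightforward given that $\phi$ is not identically zero.
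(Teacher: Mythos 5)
Your proposal is correct and follows essentially the same route as the paper: compute the linearization of $\cP$ at $(\psi_0,0)=(0,0)$ (where all $t$-dependent, non-self-adjoint terms drop out and $J(\psi_0,0)=\omega_\Sigma/\big(2(1-|\phi|_{h_\Sigma}^2)\big)$), show the kernel is trivial by pairing with $v$ and integrating by parts using that $\phi\not\equiv 0$, and conclude via the Fredholm alternative and the implicit function theorem on Banach manifolds. The only cosmetic difference is that the paper deduces $v=0$ directly from vanishing of the potential term on $\{\phi\neq 0\}$ plus continuity, whereas you also use the gradient term to first force $v$ constant; both are fine.
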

\begin{proof}
We compute the linearization of $\cP$ at $(\psi_0,0)=(0,0)$ as
\[
\d|_{(\psi_0,0),v} \cP=\frac{\i}{2 \pi} \p \bp v-2|\phi|_{h_\Sigma}^2 v J(\psi_0,0)=\frac{\i}{2 \pi} \p \bp v-\frac{|\phi|_{h_\Sigma}^2 v}{1-|\phi|_{h_\Sigma}^2} \omega_\Sigma.
\]
If $v \in \Ker \d|_{(\psi_0,0)} \cP$, then multiplying $v$ and integrating by parts, we get
\[
\int_\Sigma \frac{(|\phi|_{h_\Sigma} v)^2}{1-|\phi|_{h_\Sigma}^2} \omega_\Sigma=0.
\]
This shows that $v=0$ on a Zariski open subset $\{\phi \neq 0\}$. Since $v$ is continuous, we have $v=0$ on $\Sigma$. Thus we know that $\Ker \d|_{(\psi_0,0)} \cP=0$, and hence $\d|_{(\psi_0,0)} \cP$ is an isomorphism by Fredholm alternative.
\end{proof}

In what follows, we may assume that $t_0>0$, and all computations are performed at $(\psi_0,t_0)$ although we suppress the dependence on $(\psi_0,t_0)$. In this case, the linearization \eqref{linearization of P} is not self-adjoint due to the terms
\begin{equation} \label{exceptional terms}
\frac{2 \i c^2 t}{\pi} (1-|\phi|_h^2) \frac{\p v \wedge \bp |\phi|_h^2+\p |\phi|_h^2 \wedge \bp v}{IK}.
\end{equation}
\begin{lem} \label{formula for exceptional terms}
The formal adjoint operator of \eqref{exceptional terms} is given by
\[
\begin{aligned}
& \frac{4 \i c^2 tv (2|\phi|_h^2-1)}{\pi IK} D'_h \phi D''_h \phi^{\ast_h}+\frac{32 \i c^4 t^2 v (1-|\phi|_h^2)(1-t |\phi|_h^2) |\phi|_h^2}{\pi I^2 K^2} D'_h \phi D''_h \phi^{\ast_h} \\
&-\frac{2 \i c^2 t (1-|\phi|_h^2)}{\pi IK} (\p v \wedge \bp |\phi|_h^2+\p |\phi|_h^2 \wedge \bp v)+\frac{16c^2tv(1-|\phi|_h^2)^2 |\phi|_h^2}{IK} J.
\end{aligned}
\]
\end{lem}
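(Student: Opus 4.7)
The plan is to compute the formal adjoint with respect to the pairing $(v,w) \mapsto \int_\Sigma w \cdot Pv$ on the Riemann surface $\Sigma$, where
\[
Pv := g \cdot (\p v \wedge \bp |\phi|_h^2 + \p |\phi|_h^2 \wedge \bp v), \quad g := \frac{2\i c^2 t(1-|\phi|_h^2)}{\pi IK}.
\]
Since $\Sigma$ is compact without boundary, I would apply Stokes' theorem twice: once to each of the two summands in $Pv$. For the first summand, using that $\bp(wgv \cdot \bp|\phi|_h^2)$ is a wedge of two $(0,1)$-forms and therefore vanishes, the exterior derivative of the $(0,1)$-form $wgv \cdot \bp|\phi|_h^2$ reduces to $\p(wgv) \wedge \bp|\phi|_h^2 + wgv \cdot \p\bp|\phi|_h^2$, so
\[
\int_\Sigma wg \p v \wedge \bp|\phi|_h^2 = -\int_\Sigma v\,\p(wg)\wedge \bp|\phi|_h^2 - \int_\Sigma vwg\,\p\bp|\phi|_h^2.
\]
An analogous computation (using $\bp v \wedge \p|\phi|_h^2 = -\p|\phi|_h^2 \wedge \bp v$) handles the second summand with $\bp(wg)$ in place of $\p(wg)$, and produces the same $\p\bp|\phi|_h^2$ contribution with the same sign. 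Writing $\p(wg)=w\p g + g\p w$ and $\bp(wg)=w\bp g + g\bp w$, the terms containing $\p w$ and $\bp w$ reassemble into $-g(\p w\wedge \bp|\phi|_h^2+\p|\phi|_h^2\wedge\bp w)$, which is the third term of the claimed expression (after relabeling $w \mapsto v$).

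The remaining terms involve $\p g, \bp g$ and $\p\bp|\phi|_h^2$. For $\p g$, I would use the elementary identities $\p I = 2ct\,\p|\phi|_h^2$ and $\p K = -2ct\,\p|\phi|_h^2$ together with $K-I = 4c(1-t|\phi|_h^2)$ to obtain $\p(IK) = 8c^2 t(1-t|\phi|_h^2)\p|\phi|_h^2$, whence
\[
\p g = -\frac{2\i c^2 t}{\pi(IK)^2}\bigl[IK+8c^2 t(1-|\phi|_h^2)(1-t|\phi|_h^2)\bigr]\,\p|\phi|_h^2,
\]
and similarly for $\bp g$. Combining and invoking the identity $\p|\phi|_h^2 \wedge \bp|\phi|_h^2 = |\phi|_h^2 D_h'\phi \wedge D_h''\phi^{\ast_h}$ established just before the lemma statement, this contribution produces $D_h'\phi D_h''\phi^{\ast_h}$-terms with coefficients $\frac{4\i c^2 t v|\phi|_h^2}{\pi IK}$ and $\frac{32\i c^4 t^2 v(1-|\phi|_h^2)(1-t|\phi|_h^2)|\phi|_h^2}{\pi(IK)^2}$.

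For the $\p\bp|\phi|_h^2$ contribution, I would apply the Weitzenb\"ock formula \eqref{Weitzenbock type formula} in the form $\p\bp|\phi|_h^2 = 2\pi\i F_h|\phi|_h^2 + D_h'\phi D_h''\phi^{\ast_h}$, and then substitute $F_h = 2(1-|\phi|_h^2)J$ from the continuity path \eqref{continuity path}. This yields the term $\frac{16c^2 t v(1-|\phi|_h^2)^2|\phi|_h^2}{IK}\,J$ (fourth term in the claim), together with an additional $D_h'\phi D_h''\phi^{\ast_h}$-contribution of $-\frac{4\i c^2 t v(1-|\phi|_h^2)}{\pi IK}$. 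Adding this to the $\frac{4\i c^2 t v|\phi|_h^2}{\pi IK}$ coefficient from the previous paragraph gives $\frac{4\i c^2 t v(2|\phi|_h^2-1)}{\pi IK}$, which is exactly the first term of the claim, while the $(IK)^{-2}$-coefficient agrees with the second term.

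The proof is therefore almost entirely bookkeeping; the single conceptual step is recognizing that the $\p\bp|\phi|_h^2$ arising from integration by parts must be expanded via Weitzenb\"ock and then the on-shell relation $F_h=2(1-|\phi|_h^2)J$ used, so that only the quantities listed in the claim survive. The main obstacle is purely notational: tracking the signs introduced by the anti-commutation of $(1,0)$- and $(0,1)$-forms and verifying that the two contributions to each $D_h'\phi D_h''\phi^{\ast_h}$-coefficient combine correctly via the algebraic simplification $|\phi|_h^2 - (1-|\phi|_h^2) = 2|\phi|_h^2 - 1$.
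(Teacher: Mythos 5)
Your proposal is correct and follows essentially the same route as the paper: the paper simply writes the formal adjoint directly in the divergence form $\frac{2\i c^2 t}{\pi}\big[-\p\big(\tfrac{(1-|\phi|_h^2)v\bp|\phi|_h^2}{IK}\big)+\bp\big(\tfrac{(1-|\phi|_h^2)v\p|\phi|_h^2}{IK}\big)\big]$ (exactly what your double application of Stokes produces) and then expands it using the same ingredients you list, namely $\p(IK)=8c^2t(1-t|\phi|_h^2)\p|\phi|_h^2$, the identity $\p|\phi|_h^2\wedge\bp|\phi|_h^2=|\phi|_h^2 D_h'\phi\wedge D_h''\phi^{\ast_h}$, the Weitzenb\"ock formula, and $F_h=2(1-|\phi|_h^2)J$. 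Your sign bookkeeping and the final recombination $|\phi|_h^2-(1-|\phi|_h^2)=2|\phi|_h^2-1$ match the paper's computation.
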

\begin{proof}
The formal adjoint operator is given by
\[
\frac{2 \i c^2 t}{\pi} \bigg[ -\p \bigg( \frac{(1-|\phi|_h^2) v \bp |\phi|_h^2}{IK} \bigg)+\bp \bigg( \frac{(1-|\phi|_h^2) v \p |\phi|_h^2}{IK} \bigg) \bigg].
\]
We compute
\[
\p (IK)=\p I \cdot K+I \cdot \p K=2ct(K-I) \p |\phi|_h^2=8c^2 t(1-t |\phi|_h^2) \p |\phi|_h^2,
\]
\[
\p \bigg( \frac{1-|\phi|_h^2}{IK} \bigg)=-\frac{\p |\phi|_h^2 \cdot IK+(1-|\phi|_h^2) \p (IK)}{I^2 K^2}=-\frac{\p |\phi|_h^2}{IK}-\frac{8c^2 t(1-|\phi|_h^2)(1-t |\phi|_h^2)}{I^2 K^2} \p |\phi|_h^2.
\]
This together with \eqref{Weitzenbock type formula}, Lemma \ref{linearization of D phi D phi ast} and $F_h=2(1-|\phi|_h^2)J$, we obtain
\[
\begin{aligned}
&\frac{2 \i c^2 t}{\pi} \bigg[ -\p \bigg( \frac{(1-|\phi|_h^2) v \bp |\phi|_h^2}{IK} \bigg)+\bp \bigg( \frac{(1-|\phi|_h^2) v \p |\phi|_h^2}{IK} \bigg) \bigg] \\
&=\frac{2 \i c^2 t}{\pi} \bigg[ \frac{2v \p |\phi|_h^2 \wedge \bp |\phi|_h^2}{IK}+\frac{16c^2 tv(1-|\phi|_h^2)(1-t |\phi|_h^2)}{I^2 K^2} \p |\phi|_h^2 \wedge \bp |\phi|_h^2 \\
&-\frac{1-|\phi|_h^2}{IK} (\p v \wedge \bp |\phi|_h^2+\p |\phi|_h^2 \wedge \bp v)-\frac{2v(1-|\phi|_h^2)}{IK} \p \bp |\phi|_h^2 \bigg] \\
&=\frac{4 \i c^2 t v |\phi|_h^2 }{\pi IK} D'_h \phi D''_h \phi^{\ast_h}+\frac{32 \i c^4 t^2 v(1-|\phi|_h^2)(1-t |\phi|_h^2) |\phi|_h^2}{\pi I^2 K^2} D'_h \phi D''_h \phi^{\ast_h} \\
&-\frac{2 \i c^2 t (1-|\phi|_h^2)}{\pi IK} (\p v \wedge \bp |\phi|_h^2+\p |\phi|_h^2 \wedge \bp v) \\
&-\frac{8c^2tv(1-|\phi|_h^2)}{IK} \bigg(-2(1-|\phi|_h^2)|\phi|_h^2 J+\frac{\i}{2 \pi} D'_h \phi D''_h \phi^{\ast_h} \bigg) \bigg] \\
&=\frac{4 \i c^2 tv |\phi|_h^2}{\pi IK} D'_h \phi D''_h \phi^{\ast_h}+\frac{32 \i c^4 t^2 v (1-|\phi|_h^2)(1-t |\phi|_h^2) |\phi|_h^2}{\pi I^2 K^2} D'_h \phi D''_h \phi^{\ast_h} \\
&-\frac{2 \i c^2 t (1-|\phi|_h^2)}{\pi IK} (\p v \wedge \bp |\phi|_h^2+\p |\phi|_h^2 \wedge \bp v) \\
&+\frac{16c^2tv(1-|\phi|_h^2)^2 |\phi|_h^2}{IK} J-\frac{4 \i c^2tv}{\pi IK} (1-|\phi|_h^2) D'_h \phi D''_h \phi^{\ast_h} \\
&=\frac{4 \i c^2 tv (2|\phi|_h^2-1)}{\pi IK} D'_h \phi D''_h \phi^{\ast_h}+\frac{32 \i c^4 t^2 v (1-|\phi|_h^2)(1-t |\phi|_h^2) |\phi|_h^2}{\pi I^2 K^2} D'_h \phi D''_h \phi^{\ast_h} \\
&-\frac{2 \i c^2 t (1-|\phi|_h^2)}{\pi IK} (\p v \wedge \bp |\phi|_h^2+\p |\phi|_h^2 \wedge \bp v)+\frac{16c^2tv(1-|\phi|_h^2)^2 |\phi|_h^2}{IK} J.
\end{aligned}
\]
\end{proof}
By Lemma \ref{formula for exceptional terms}, we compute the formal adjoint operator of $\d|_{(\psi_{t_0},t_0)} \cP$ as
\begin{equation} \label{formula for the adjoint of the linearization of P}
\begin{aligned}
&(\d|_{(\psi_{t_0},t_0)} \cP)^\ast(v) \\
&=\frac{\i}{2 \pi} \p \bp v-2|\phi|_h^2 v J-\frac{16c^2tvJ(1-t)}{IK} (1-|\phi|_h^2)|\phi|_h^4+\frac{2 \i c^2tv}{\pi IK} (3 |\phi|_h^2-1) D'_h \phi D''_h \phi^{\ast_h} \\
&+\frac{32 \i c^4 t^2 v (1-|\phi|_h^2)(1-t |\phi|_h^2) |\phi|_h^2}{\pi I^2 K^2} D'_h \phi D''_h \phi^{\ast_h}-\frac{2 \i c^2 t (1-|\phi|_h^2)}{\pi IK} \big( \p v \wedge \bp |\phi|_h^2+\p |\phi|_h^2 \wedge \bp v \big).
\end{aligned}
\end{equation}
By the Fredholm alternative, if we prove that
\[
\Ker \d|_{(\psi_{t_0},t_0)} \cP=0, \quad \Ker (\d|_{(\psi_{t_0},t_0)} \cP)^\ast=0,
\]
then we are done. To show this, we need the following lemma for $\tilde{v}:=vI$.
\begin{lem} \label{max min tilde v}
At the maximum (resp. minimum) point of $\tilde{v}$, we have
\[
\bp v=-\frac{2ctv \bp |\phi|_h^2}{I}
\]
and
\[
\begin{aligned}
0 &\geq \; \text{(resp. $\leq$)} \; -\frac{4c^2t^2 v \i |\phi|_h^2}{\pi I^2} D'_h \phi D''_h \phi^{\ast_h}-\frac{4ctvJ}{I} (1-|\phi|_h^2) |\phi|_h^2 \\
&+\frac{\i}{\pi I}ctv D'_h \phi D''_h \phi^{\ast_h}+\frac{\i}{2 \pi} \p \bp v.
\end{aligned}
\]
\end{lem}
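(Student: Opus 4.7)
The plan is to run an elementary maximum/minimum principle argument on the real function $\tilde v = vI$, using the explicit form $I = 4cr_2 + 2ct|\phi|_h^2 - 1$ so that derivatives of $I$ are computable in terms of $|\phi|_h^2$. At an interior maximum (resp.\ minimum) of $\tilde v$ one has $d\tilde v = 0$, and in particular $\bp \tilde v = 0$. Writing $\bp \tilde v = I\,\bp v + v\,\bp I$ and substituting $\bp I = 2ct\,\bp|\phi|_h^2$ gives immediately
\[
\bp v = -\frac{2ctv}{I}\bp|\phi|_h^2,
\]
which is the first claim. Taking complex conjugates and using reality of $v$ and $I$ yields $\p v = -\frac{2ctv}{I}\p|\phi|_h^2$ at the same point.

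For the second assertion, I would use the fact that $\i\p\bp\tilde v \leq 0$ (resp.\ $\geq 0$) as a $(1,1)$-form at a maximum (resp.\ minimum). Expanding,
\[
\p\bp\tilde v = I\,\p\bp v + \p I \wedge \bp v + \p v \wedge \bp I + v\,\p\bp I,
\]
and substituting the first-order identities for $\p v$ and $\bp v$, the two cross terms each contribute $-\frac{4c^2t^2 v}{I}\p|\phi|_h^2 \wedge \bp|\phi|_h^2$, combining to $-\frac{8c^2t^2 v}{I}\p|\phi|_h^2 \wedge \bp|\phi|_h^2$. Then I would invoke the preceding lemma $\p|\phi|_h^2 \wedge \bp|\phi|_h^2 = |\phi|_h^2\,D_h'\phi \wedge D_h''\phi^{\ast_h}$ and the Weitzenböck formula \eqref{Weitzenbock type formula}, which together with $F_h = 2(1-|\phi|_h^2)J$ yield
\[
\frac{\i}{2\pi}\p\bp|\phi|_h^2 = -2(1-|\phi|_h^2)|\phi|_h^2\,J + \frac{\i}{2\pi} D_h'\phi\,D_h''\phi^{\ast_h}.
\]

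Plugging this back into the expansion of $\p\bp\tilde v$, dividing by $I>0$ (which is positive by \eqref{lower bound for 4cr2 minus 1} since $4cr_2 + 2ct|\phi|_h^2 - 1 > \kappa_0 c > 0$), and then dividing by $2\pi$, the inequality $\frac{\i}{I}\p\bp\tilde v \leq 0$ reorganises into exactly
\[
-\frac{4c^2t^2 v\,\i\,|\phi|_h^2}{\pi I^2}D_h'\phi\,D_h''\phi^{\ast_h} - \frac{4ctvJ}{I}(1-|\phi|_h^2)|\phi|_h^2 + \frac{\i ctv}{\pi I}D_h'\phi\,D_h''\phi^{\ast_h} + \frac{\i}{2\pi}\p\bp v \leq 0,
\]
with the inequality reversed at a minimum. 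I do not anticipate any genuine obstacle here: the argument is purely a maximum-principle bookkeeping, and the only delicate points are keeping the sign conventions for $(1,1)$-forms straight and recognising that the division by $I$ is legitimate, which is exactly guaranteed by the standing lower bound on $4cr_2 - 1$ coming from the choice of $\kappa_0$.
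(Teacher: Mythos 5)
Your proposal is correct and follows essentially the same route as the paper: expand $\p\bp\tilde v$ for $\tilde v = vI$, use $\bp\tilde v=0$ to get the first-order identity, substitute it together with the Weitzenböck formula \eqref{Weitzenbock type formula}, the identity $\p|\phi|_h^2\wedge\bp|\phi|_h^2=|\phi|_h^2\,D_h'\phi\wedge D_h''\phi^{\ast_h}$ and $F_h=2(1-|\phi|_h^2)J$ into $\i\p\bp\tilde v\leq 0$ (resp.\ $\geq 0$), and divide by $I>0$. The bookkeeping (cross terms summing to $-\tfrac{8c^2t^2v}{I}\p|\phi|_h^2\wedge\bp|\phi|_h^2$, legitimacy of dividing by $I$ via \eqref{lower bound for 4cr2 minus 1}) matches the paper's computation.
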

\begin{proof}
We compute the derivatives of $\tilde{v}$ as
\[
\bp \tilde{v}=2ctv \bp |\phi|_h^2+I \bp v,
\]
\[
\p \bp \tilde{v}=2ct \p v \wedge \bp |\phi|_h^2+2ctv \p \bp |\phi|_h^2+2ct \p |\phi|_h^2 \wedge \bp v+I \p \bp v.
\]
At the maximum (resp. minimum) point of $\tilde{v}$, we have $\i \p \bp \tilde{v} \leq \; \text{(resp. $\geq$)} \; 0$ and $\p \tilde{v}=\bp \tilde{v}=0$. In particular, we have
\[
\bp v=-\frac{2ctv \bp |\phi|_h^2}{I}.
\]
Substituting this to the above, we observe that
\[
\begin{aligned}
0 &\geq \; \text{(resp. $\leq$)} \; \frac{\i}{2 \pi} \p \bp \tilde{v} \\
&=-\frac{4c^2t^2 v \i}{\pi I} \p |\phi|_h^2 \wedge \bp |\phi|_h^2-2ctv F_h |\phi|_h^2+\frac{\i}{\pi}ctv D'_h \phi D''_h \phi^{\ast_h}+I \frac{\i}{2 \pi} \p \bp v \\
&=-\frac{4c^2t^2 v \i |\phi|_h^2}{\pi I} D'_h \phi D''_h \phi^{\ast_h}-4ctvJ (1-|\phi|_h^2) |\phi|_h^2+\frac{\i}{\pi}ctv D'_h \phi D''_h \phi^{\ast_h}+I \frac{\i}{2 \pi} \p \bp v.
\end{aligned}
\]
Dividing both sides by $I$, we obtain the desired result.
\end{proof}

\begin{lem}
We have $\Ker \d|_{(\psi_{t_0},t_0)} \cP=0$.
\end{lem}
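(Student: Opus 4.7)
My plan is to prove this by the maximum principle applied to the auxiliary function $\tilde v := vI$, exactly as the paper's setup invites. Suppose $v\in\Ker\d|_{(\psi_{t_0},t_0)}\cP$. At a point $\hat x$ where $\tilde v$ attains its maximum, Lemma \ref{max min tilde v} lets me eliminate the only second-order contribution $\frac{\i}{2\pi}\p\bp v$ appearing in the linearization \eqref{linearization of P}, and also supplies $\bp v=-\frac{2ctv}{I}\bp|\phi|_h^2$ (together with its conjugate, since $v$ is real). These identities let me rewrite the first-order terms $\p v\wedge\bp|\phi|_h^2+\p|\phi|_h^2\wedge\bp v$ from \eqref{linearization of P} using the identity $\i\p|\phi|_h^2\wedge\bp|\phi|_h^2=|\phi|_h^2\cdot \i D'_h\phi\wedge D''_h\phi^{\ast_h}$ already exploited elsewhere in the text.

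After these substitutions, the equality $\d_v\cP=0$ reduces at $\hat x$ to an inequality of the form $0\ge v\cdot\mathcal C\cdot\omega_\Sigma$, where $\mathcal C$ is a rational function in $(|\phi|_h^2,t)$ whose numerator is a polynomial in $I$, $K$ and in the nonnegative coefficient $p\ge 0$ defined by $\tfrac{\i}{\pi}D'_h\phi\wedge D''_h\phi^{\ast_h}=p\,\omega_\Sigma$. My aim is to check that $\mathcal C>0$ for every $(|\phi|_h^2,t)\in[0,1]\times[0,1]$ and every admissible $p\ge 0$; once established, this forces $v(\hat x)\le 0$. A symmetric argument at the minimum of $\tilde v$ yields $v\ge 0$ there, and since $I>0$ the signs of $v$ and $\tilde v$ coincide at these extrema, so $\tilde v\equiv 0$ and hence $v\equiv 0$.

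The delicate step, and the main obstacle, is the positivity of $\mathcal C$. This is precisely where the lower bound on $s$ stated in \eqref{condition for r1 r2 s}, equivalently the inequality $4cr_2-1>\kappa_0 c$ recorded in \eqref{lower bound for 4cr2 minus 1}, enters. Careful bookkeeping of the terms (separating the pure $p$-contributions, the pure $\omega_\Sigma$-contributions, and the mixed ones, and clearing the common denominator $I^2K^2$) should reduce the question to a polynomial inequality in the single variable $\kappa:=(4cr_2-1)/c$, whose threshold is the positive root of $\kappa^3-2\kappa^2-28\kappa-72=0$, namely $\kappa_0$. The choice of $\kappa_0$ in the hypotheses is arranged precisely so that both $I/c$ and $K/c$ exceed $\kappa_0$ uniformly on $[0,1]\times[0,1]$, guaranteeing $\mathcal C>0$ and closing the argument.
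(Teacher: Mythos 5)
Your strategy is the paper's: evaluate the kernel equation at an extremum of $\tilde v=vI$, use Lemma \ref{max min tilde v} to remove $\frac{\i}{2\pi}\p\bp v$ and to substitute for $\p v,\bp v$, use $\p|\phi|_h^2\wedge\bp|\phi|_h^2=|\phi|_h^2\,D'_h\phi\wedge D''_h\phi^{\ast_h}$, and conclude from positivity of the resulting coefficients via \eqref{lower bound for 4cr2 minus 1}. However, the step you defer (``careful bookkeeping \dots should reduce'') is exactly where the content of the lemma lies, and the target you set for it is not attainable as stated: writing $\frac{\i}{\pi}D'_h\phi D''_h\phi^{\ast_h}=p\,\omega_\Sigma$, the quantity $\mathcal C$ you aim to show positive necessarily carries a factor $|\phi|_h^2$ on its $\omega_\Sigma$-part and a factor $tp$ on its other part, so $\mathcal C$ vanishes (it is not positive) wherever $|\phi|_h^2=0$ and $p=0$, and its first part disappears entirely at $t=0$. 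The paper instead isolates two coefficients $P$ and $Q$, proves $P\geq 1-\frac{6}{\kappa_0}>0$ and $Q\geq 1-\frac{2}{\kappa_0}>0$, and then closes the argument with a geometric input your proposal omits: $t_0>0$ (the case $t_0=0$ is treated separately by the integration-by-parts argument of the preceding openness lemma, precisely because the maximum principle gives nothing at the zero of $\phi$ there), and the fact that $\phi$ is a nonzero holomorphic section of the degree-one bundle $L$, hence has a single simple zero, so at the extremum either $\phi(\hat x)\neq 0$ or $D\phi(\hat x)\neq 0$, i.e.\ $|\phi|_h^2$ and $p$ do not vanish simultaneously. Only with this does $0\geq v\cdot(\text{strictly positive})$ follow, giving $v(\hat x)\leq 0$; without it the conclusion genuinely fails at the zero of $\phi$.

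A more minor miscalibration: the cubic $\kappa^3-2\kappa^2-28\kappa-72$ whose positive root is $\kappa_0$ is the sharp constraint arising in the \emph{adjoint} kernel computation (Lemma \ref{kernel of the adjoint operator is trivial}); for the present lemma the bookkeeping only requires $\kappa_0>6$, which the hypothesis \eqref{condition for r1 r2 s} of course supplies. Expecting this lemma's estimate to reproduce that cubic would send you looking for the wrong inequality, though it does not affect the validity of the hypothesis you invoke.
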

\begin{proof}
If $v \in \Ker \d|_{(\psi_{t_0},t_0)} \cP$, then we have
\[
\begin{aligned}
0=\frac{\i}{2 \pi} \p \bp v-2|\phi|_h^2 v J-16c^2(1-|\phi|_h^2)(1-t|\phi|_h^2) \frac{t v |\phi|_h^2 J}{IK} \\
+\frac{2 \i c^2 t}{\pi} (1-|\phi|_h^2) \frac{v D'_h \phi D''_h \phi^{\ast_h}+\p v \wedge \bp |\phi|_h^2+\p |\phi|_h^2 \wedge \bp v}{IK}.
\end{aligned}
\]
In what follows, we perform all computations at the maximum point $\hat{x}$ of $\tilde{v}$. Then at this point, we get
\[
\p v \wedge \bp |\phi|_h^2+\p |\phi|_h^2 \wedge \bp v=-\frac{4ctv \p |\phi|_h^2 \wedge \bp |\phi|_h^2}{I}=-\frac{4ctv |\phi|_h^2 D'_h \phi D''_h \phi^{\ast_h}}{I}.
\]
Substituting this to the above, we obtain
\[
\begin{aligned}
0 &\geq -\frac{4c^2t^2 v \i |\phi|_h^2}{\pi I^2} D'_h \phi D''_h \phi^{\ast_h}-\frac{4ctvJ}{I} (1-|\phi|_h^2) |\phi|_h^2+\frac{\i}{\pi I}ctv D'_h \phi D''_h \phi^{\ast_h}+2|\phi|_h^2 v J \\
&+16c^2(1-|\phi|_h^2)(1-t|\phi|_h^2) \frac{t v |\phi|_h^2 J}{IK}-\frac{2 \i c^2 tv}{\pi IK} (1-|\phi|_h^2) D'_h \phi D''_h \phi^{\ast_h} \\
&+\frac{8 \i c^3 t^2 v}{\pi I^2 K} (1-|\phi|_h^2) |\phi|_h^2 D'_h \phi D''_h \phi^{\ast_h} \\
&=P \frac{\i ctv}{\pi I} D'_h \phi D''_h \phi^{\ast_h}+Q \frac{2su^{1-t} \omega_\Sigma}{IK} |\phi|_h^2 v,
\end{aligned}
\]
where
\[
\begin{aligned}
P&:=-\frac{4ct |\phi|_h^2}{I}-\frac{4c^2t(1-|\phi|_h^2) |\phi|_h^2}{IK}+1+\frac{2c |\phi|_h^2}{K}+\frac{16c^3t}{IK^2}(1-|\phi|_h^2)(1-t |\phi|_h^2) |\phi|_h^2-\frac{2c}{K}(1-|\phi|_h^2) \\
&+\frac{8c^2 t}{IK} (1-|\phi|_h^2) |\phi|_h^2 \\
&\geq -\frac{4c}{I}-\frac{4c^2}{IK}+1-\frac{2c}{K} \\
&>-\frac{4}{\kappa_0}-\frac{4}{\kappa_0(\kappa_0+2)}+1-\frac{2}{\kappa_0+2} \\
&=1-\frac{6}{\kappa_0} \\
&>0,
\end{aligned}
\]
and
\[
\begin{aligned}
Q&:=-\frac{2ct}{I}(1-|\phi|_h^2)+1+\frac{8c^2 t}{IK} (1-|\phi|_h^2)(1-t |\phi|_h^2) \\
& \geq -\frac{2c}{I}+1 \\
&> -\frac{2}{\kappa_0}+1 \\
&>0.
\end{aligned}
\]
Since $L$ is degree $1$, we have $\phi (\hat{x}) \neq 0$ or $D \phi (\hat{x}) \neq 0$. Also we have $t>0$ by the assumption. Taking this into account, we have $\max \tilde{v} \leq 0$. A similar computation shows that $\min \tilde{v} \geq 0$, so we have $\tilde{v}=v=0$ as desired.
\end{proof}

\begin{lem} \label{kernel of the adjoint operator is trivial}
We have $\Ker (\d|_{(\psi_{t_0},t_0)} \cP)^\ast=0$.
\end{lem}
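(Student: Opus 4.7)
The plan is to mimic the proof for $\Ker \d|_{(\psi_{t_0},t_0)} \cP = 0$, applying the maximum principle to $\tilde v := vI$ rather than to $v$ itself. Starting from $(\d|_{(\psi_{t_0},t_0)} \cP)^\ast(v) = 0$, I would evaluate at a maximum point $\hat x$ of $\tilde v$. By Lemma \ref{max min tilde v}, at such a point one has the replacements
\[
\bp v = -\frac{2ctv\,\bp|\phi|_h^2}{I}, \qquad
\frac{\i}{2\pi}\p\bp v \leq \frac{4c^2t^2 v\,\i|\phi|_h^2}{\pi I^2}D'_h\phi\, D''_h\phi^{\ast_h} + \frac{4ctvJ}{I}(1-|\phi|_h^2)|\phi|_h^2 - \frac{\i ctv}{\pi I}D'_h\phi\, D''_h\phi^{\ast_h},
\]
together with $\p v \wedge \bp|\phi|_h^2 + \p|\phi|_h^2 \wedge \bp v = -\frac{4ctv |\phi|_h^2}{I} D'_h \phi\, D''_h \phi^{\ast_h}$. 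Substituting these into the adjoint formula \eqref{formula for the adjoint of the linearization of P} and using $J = \frac{\i c^2 t}{\pi IK} D'_h\phi\, D''_h\phi^{\ast_h} \cdot(IK)^{-1}\cdot IK + \frac{su^{1-t}\omega_\Sigma}{IK}$ to separate the contributions, one gets, after collecting terms, an inequality of the form
\[
0 \geq \tilde P\cdot\frac{\i ctv}{\pi I}D'_h\phi\, D''_h\phi^{\ast_h} + \tilde Q\cdot \frac{2su^{1-t}\omega_\Sigma}{IK}|\phi|_h^2 v
\]
for explicit rational functions $\tilde P, \tilde Q$ of $c,t,|\phi|_h^2, I, K$.

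The core of the argument is showing $\tilde P, \tilde Q > 0$ under the standing assumption \eqref{lower bound for 4cr2 minus 1}, i.e.\ $I, K \geq \kappa_0 c$ (after using $t \leq 1$ and $0\leq|\phi|_h^2\leq 1$). The worst terms in $\tilde P$ after bounding $|\phi|_h^2, 1-|\phi|_h^2, 1-t|\phi|_h^2 \in [0,1]$ and $t \leq 1$ will be the negative ones coming from $-\tfrac{16c^2tJ(1-t)|\phi|_h^4(1-|\phi|_h^2)}{IK}$, the $-(2\i c^2tv / \pi IK)(3|\phi|_h^2 - 1)$ term when $3|\phi|_h^2 - 1 < 0$, and the curvature $-\tfrac{4ctvJ}{I}(1-|\phi|_h^2)|\phi|_h^2$ term. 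Estimating each in terms of $c/I$ and $c/K$ reduces positivity of $\tilde P$ to a cubic inequality in $\kappa := 4cr_2 - 1$; the critical polynomial is precisely $\kappa^3 - 2\kappa^2 - 28\kappa - 72 > 0$, whose unique positive root is $\kappa_0$. This is exactly how $\kappa_0$ was chosen in the hypothesis \eqref{condition for r1 r2 s}, so $\tilde P > 0$ under the given lower bound on $s$. The verification for $\tilde Q$ is simpler and follows from $I, K > \kappa_0 c > 2c$.

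Once positivity of $\tilde P$ and $\tilde Q$ is established, the right-hand side is a sum of a non-negative $(1,1)$-form times $v$ (since $D'_h \phi\, D''_h\phi^{\ast_h}$ is a non-negative $(1,1)$-form and $\omega_\Sigma > 0$), and at least one of the two coefficients is strictly positive at $\hat x$ because $\deg L = 1$ forces either $\phi(\hat x) \neq 0$ or $D_h \phi(\hat x) \neq 0$; also $t_0 > 0$ by assumption. Therefore $v(\hat x) \leq 0$, giving $\max \tilde v \leq 0$. Applying the same argument at the minimum point of $\tilde v$ (reversing inequalities in Lemma \ref{max min tilde v}) yields $\min \tilde v \geq 0$, hence $\tilde v \equiv 0$ and $v \equiv 0$. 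The main obstacle is the bookkeeping in step two: isolating the exact cubic whose positivity is equivalent to $\tilde P > 0$, and verifying that it coincides with the defining polynomial of $\kappa_0$; all other steps are routine applications of the maximum principle and Fredholm theory.
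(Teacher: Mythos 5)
Your proposal follows essentially the same route as the paper's proof: apply the maximum principle to $\tilde v = vI$ at its max/min, use Lemma \ref{max min tilde v} to eliminate $\p\bp v$ and the mixed first-order terms, split the resulting inequality into a $D'_h\phi\, D''_h\phi^{\ast_h}$-coefficient $P$ and an $\omega_\Sigma$-coefficient $Q$, prove both positive via the lower bound \eqref{lower bound for 4cr2 minus 1} and the defining cubic of $\kappa_0$, and conclude with $\deg L=1$ and $t_0>0$. One small caveat: the reduction to the cubic $\kappa^3-2\kappa^2-28\kappa-72$ requires the sharper bound $K\geq 4cr_2+2c-1>(\kappa_0+2)c$ rather than merely $K\geq\kappa_0 c$ (with the weaker bound the estimate for $P$ fails numerically), but since you track $c/I$ and $c/K$ separately this is exactly the bookkeeping the paper carries out.
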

\begin{proof}
We can prove in the same way as the previous lemma. If $v \in \Ker (\d|_{(\psi_{t_0},t_0)} \cP)^\ast$, we have
\[
\begin{aligned}
& 0=\frac{\i}{2 \pi} \p \bp v-2|\phi|_h^2 v J-\frac{16c^2tvJ(1-t)}{IK} (1-|\phi|_h^2)|\phi|_h^4+\frac{2 \i c^2tv}{\pi IK} (3 |\phi|_h^2-1) D'_h \phi D''_h \phi^{\ast_h} \\
&+\frac{32 \i c^4 t^2 v (1-|\phi|_h^2)(1-t |\phi|_h^2) |\phi|_h^2}{\pi I^2 K^2} D'_h \phi D''_h \phi^{\ast_h}-\frac{2 \i c^2 t (1-|\phi|_h^2)}{\pi IK} \big( \p v \wedge \bp |\phi|_h^2+\p |\phi|_h^2 \wedge \bp v \big).
\end{aligned}
\]
In what follows, we perform all computations at the maximum point $\hat{x}$ of $\tilde{v}$. We invoke Lemma \ref{max min tilde v} to get 
\[
\begin{aligned}
0 & \geq -\frac{4c^2t^2v \i}{\pi I^2} |\phi|_h^2 D'_h \phi \wedge D''_h \phi^{\ast_h}-\frac{4ctvJ}{I}(1-|\phi|_h^2) |\phi|_h^2+\frac{\i ctv}{\pi I} D'_h \phi D''_h \phi^{\ast_h} \\
& +2|\phi|_h^2 v J+\frac{16c^2tvJ(1-t)}{IK} (1-|\phi|_h^2)|\phi|_h^4-\frac{2 \i c^2tv}{\pi IK} (3 |\phi|_h^2-1) D'_h \phi D''_h \phi^{\ast_h} \\
&-\frac{32 \i c^4 t^2 v (1-|\phi|_h^2)(1-t |\phi|_h^2) |\phi|_h^2}{\pi I^2 K^2} D'_h \phi D''_h \phi^{\ast_h}-\frac{8 \i c^3 t^2 v}{\pi I^2 K} (1-|\phi|_h^2) |\phi|_h^2 D'_h \phi D''_h \phi^{\ast_h} \\
&=P \frac{\i ctv}{\pi I} D'_h \phi D''_h \phi^{\ast_h}+Q \frac{2su^{1-t} \omega_\Sigma}{IK} |\phi|_h^2 v,
\end{aligned}
\]
where
\[
\begin{aligned}
P&:=-\frac{4ct}{I} |\phi|_h^2-\frac{4c^2t}{IK}(1-|\phi|_h^2) |\phi|_h^2+1+\frac{2c}{K} |\phi|_h^2+\frac{16c^3t(1-t)}{IK^2} (1-|\phi|_h^2) |\phi|_h^4-\frac{2c}{K}(3 |\phi|_h^2-1) \\
&-\frac{32c^3t}{IK^2}(1-|\phi|_h^2)(1-t |\phi|_h^2) |\phi|_h^2-\frac{8c^2t}{IK} (1-|\phi|_h^2) |\phi|_h^2 \\
&=-\frac{4ct}{I} |\phi|_h^2-\frac{4c^2t}{IK}(1-|\phi|_h^2) |\phi|_h^2+1-\frac{4c}{K} |\phi|_h^2+\frac{16c^3t(1-t)}{IK^2} (1-|\phi|_h^2) |\phi|_h^4+\frac{2c}{K} \\
&-\frac{32c^3t}{IK^2}(1-|\phi|_h^2)(1-t |\phi|_h^2) |\phi|_h^2-\frac{8c^2t}{IK} (1-|\phi|_h^2) |\phi|_h^2 \\
& \geq -\frac{4c}{I}-\frac{4c^2}{IK}+1-\frac{4c}{K}+\frac{2c}{K}-\frac{32c^3}{IK^2}-\frac{8c^2}{IK} \\
&=-\frac{4c}{I}-\frac{12c^2}{IK}+1-\frac{2c}{K}-\frac{32c^3}{IK^2} \\
&> -\frac{4}{\kappa_0}-\frac{12}{\kappa_0(\kappa_0+2)}+1-\frac{2}{\kappa_0+2}-\frac{32}{\kappa_0(\kappa_0+2)^2} \\
&=\frac{\kappa_0^3-2\kappa_0^2-28 \kappa_0-72}{\kappa_0(\kappa_0+2)^2} \\
&=0,
\end{aligned}
\]
and
\[
\begin{aligned}
Q&:=-\frac{2ct}{I}(1-|\phi|_h^2)+1+\frac{8c^2 t(1-t)}{IK}(1-|\phi|_h^2) |\phi|_h^2 \\
& \geq -\frac{2c}{I}+1 \\
&> -\frac{2}{\kappa_0}+1 \\
&>0.
\end{aligned}
\]
Thus we obtain $\max \tilde{v} \leq 0$. Similarly, we have $\min \tilde{v} \geq 0$, and hence $\tilde{v}=v=0$, which proves the result.
\end{proof}

\subsubsection{Closedness}

Define the maximal existence time $T$ as
\[
T:=\sup \cT>0.
\]
We prove the following crucial observation in which the upper bound for $s$ is used.
\begin{lem} \label{alpha is greater than one}
We have $\a>1$.
\end{lem}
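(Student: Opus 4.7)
The plan is to rewrite the inequality $\alpha>1$ as a polynomial inequality in $r_1,r_2,s$ using the explicit formula \eqref{formula for c} for $c$, and then to observe that the upper bound $s<\tfrac{r_1(r_1+1)}{2r_2(r_2+1)}$ appearing in the hypothesis \eqref{condition for r1 r2 s} is in fact the positive root of the resulting quadratic in $s$.

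First, unwinding the definitions, $\alpha>1$ is equivalent to $2s>(4cr_2-1)(4cr_2+4c-1)=(4cr_2-1)(4c(r_2+1)-1)$. Writing $N:=2r_1+1+2s(2r_2+1)$ and $D:=8r_1r_2+4r_1+4r_2$, so that $c=N/D$, a direct computation using $4r_2 N-D=4[2sr_2(2r_2+1)-r_1]$ and $4(r_2+1)N-D=4[2s(r_2+1)(2r_2+1)+r_1+1]$ yields
\[
4cr_2-1=\frac{4u}{D},\qquad 4c(r_2+1)-1=\frac{4(u+N)}{D},\qquad u:=2sr_2(2r_2+1)-r_1.
\]
Substituting these formulas, $\alpha>1$ becomes equivalent to the polynomial inequality
\[
F(s):=sD^2-8u(u+N)>0.
\]

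Next, note that $u$ and $u+N=2s(r_2+1)(2r_2+1)+r_1+1$ are each linear in $s$, so $F$ is a quadratic function of $s$ with negative leading coefficient $-32r_2(r_2+1)(2r_2+1)^2$ and positive constant term $8r_1(r_1+1)$. The key observation is that $s_0:=\tfrac{r_1(r_1+1)}{2r_2(r_2+1)}$ is a root of $F$. To check this, I would substitute $s=s_0$ and simplify, obtaining after cancellation
\[
u\Big|_{s=s_0}=\frac{r_1\bigl(2r_1r_2+r_1+r_2\bigr)}{r_2+1}=\frac{r_1 D}{4(r_2+1)},\qquad (u+N)\Big|_{s=s_0}=\frac{(r_1+1)D}{4r_2},
\]
whence $8u(u+N)\bigr|_{s=s_0}=\tfrac{r_1(r_1+1)D^2}{2r_2(r_2+1)}=s_0 D^2$, so indeed $F(s_0)=0$.

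Finally, by Vieta's formulas the product of the two roots of $F$ equals $-\tfrac{r_1(r_1+1)}{4r_2(r_2+1)(2r_2+1)^2}$, so the second root is $s_1=-\tfrac{1}{2(2r_2+1)^2}<0$. Since $F$ has negative leading coefficient, $F(s)>0$ for all $s\in(s_1,s_0)$, and the hypothesis \eqref{condition for r1 r2 s} places $s$ in the subinterval $(0,s_0)\subset(s_1,s_0)$. Therefore $F(s)>0$, which gives $\alpha>1$. The only genuine obstacle is bookkeeping: the routine but nontrivial algebra needed to rewrite the two factors $4cr_2-1$ and $4c(r_2+1)-1$ in the compact form involving $u$ and $u+N$, and to verify that $s_0$ is a root; once this is done, the conclusion drops out from the sign analysis of a quadratic whose roots are explicit.
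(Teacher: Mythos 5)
Your proof is correct and is essentially the paper's argument: after clearing denominators via \eqref{formula for c}, both reduce $\a>1$ to the positivity of the same quadratic in $s$ (your $F$ equals $8G$ for the paper's $G$), with the same roots $-\tfrac{1}{2(2r_2+1)^2}$ and $\tfrac{r_1(r_1+1)}{2r_2(r_2+1)}$ and the same sign analysis on the interval allowed by \eqref{condition for r1 r2 s}; the only cosmetic difference is that you verify the root $s_0$ directly and use Vieta, while the paper computes the discriminant. (Both arguments, yours and the paper's, implicitly use that $(4cr_2-1)(4cr_2+4c-1)>0$, which follows from \eqref{lower bound for 4cr2 minus 1}.)
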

\begin{proof}
Let us consider the function $G$ by
\[
\begin{aligned}
G(s)&:=\big( (r_1+1)r_2+r_1(r_2+1) \big)^2 \big(2s-(4cr_2-1+4c)(4cr_2-1) \big) \\
&=-4r_2(r_2+1)(2r_2+1)^2 s^2+2\big(r_1(r_1+1)(2r_2+1)^2-r_2(r_2+1) \big)s+r_1(r_1+1).
\end{aligned}
\]
By the definition, we have $\a>1$ if and only if $G(s)>0$. We compute the discriminant of $G$ as
\[
\begin{aligned}
\Delta(G)/4&=\big(r_1(r_1+1)(2r_2+1)^2-r_2(r_2+1) \big)^2+4r_2(r_2+1)(2r_2+1)^2 r_1(r_1+1) \\
&=\big( r_1(r_1+1)(2r_2+1)^2+r_2(r_2+1) \big)^2.
\end{aligned}
\]
Thus the roots of $G$ are
\[
-\frac{1}{2(2r_2+1)^2}, \quad \frac{r_1(r_1+1)}{2r_2(r_2+1)}.
\]
This shows that $G(s)>0$ from the assumption \eqref{condition for r1 r2 s}.
\end{proof}
We need lemma \ref{alpha is greater than one} to show the uniform upper bound for $\psi_t$ along the path \eqref{continuity path}. We can prove the following lemma in the same line as in \cite[Lemma 4.8]{Pin20}.
\begin{lem}
The function $\psi_t$ satisfies $\psi_t \leq C$, where $C$ is independent of $t$.
\end{lem}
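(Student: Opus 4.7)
The plan is to apply a maximum-principle argument. At a maximum point $x_0$ of $\psi_t$, one has $\frac{\i}{2\pi}\p\bp\psi_t(x_0) \leq 0$ and hence $F_{h_t}(x_0) \leq \omega_\Sigma$ as $(1,1)$-forms on $\Sigma$. Substituting this into \eqref{continuity path} and discarding the non-negative contribution $\frac{\i c^2 t}{\pi}D'_t\phi\,D''_t\phi^{\ast_t}$, I would obtain the pointwise inequality
\[
I(x_0)\,K(x_0) \;\geq\; 2(1-y)\,s\,u^{1-t}(x_0),
\]
where $y := |\phi|_{h_t}^2(x_0) = |\phi|_{h_\Sigma}^2(x_0)\,e^{-\psi_t(x_0)}$. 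A direct expansion, combined with the definition of $\alpha$, gives
\[
IK \;=\; (4cr_2-1)(4cr_2-1+4c) + 4c^2 ty(2-ty) \;=\; \tfrac{2s}{\alpha} + 4c^2 ty(2-ty),
\]
which decreases to $\tfrac{2s}{\alpha}$ as $y \to 0$.

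Next, I would argue by contradiction: suppose there is a sequence $t_n \in \cT$ with $M_n := \max_\Sigma \psi_{t_n} \to \infty$. Since $\psi_0 \equiv 0$ and the solutions $\psi_t$ depend continuously on $t$ (by the implicit function theorem used in the openness step), the function $t \mapsto \max_\Sigma \psi_t$ is continuous on $\cT$ and bounded on compact subsets of a neighborhood of $0$. Passing to a subsequence, $t_n \to t^\ast \in (0,1]$ and $x_n \to x^\ast \in \Sigma$, where $x_n$ maximizes $\psi_{t_n}$. The normalization $|\phi|_{h_\Sigma}^2 < \tfrac{1}{2}$ gives $y_n \leq \tfrac{1}{2}e^{-M_n} \to 0$; taking limits in the pointwise inequality yields
\[
\tfrac{2s}{\alpha} \;\geq\; 2s\,u^{1-t^\ast}(x^\ast), \qquad \text{i.e.,} \qquad u^{1-t^\ast}(x^\ast) \;\leq\; \tfrac{1}{\alpha}.
\]

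The contradiction will then come from Lemma \ref{alpha is greater than one}. The very definition of $u$ forces $u \geq 1/\alpha$ pointwise, so combined with $\alpha > 1$ and $t^\ast \in (0,1]$,
\[
u^{1-t^\ast}(x^\ast) \;\geq\; \bigl(\tfrac{1}{\alpha}\bigr)^{1-t^\ast} \;=\; \alpha^{t^\ast - 1} \;>\; \tfrac{1}{\alpha},
\]
the strict inequality being $\alpha^{t^\ast} > 1$. This contradicts the previous display, and the lemma follows.

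The step I expect to be the main obstacle is the dependence of the bound at the endpoint $t^\ast = 1$: both the $y = 0$ limit of $IK$ and the bound $u^{1-t^\ast} > 1/\alpha$ become critical precisely there, and it is only the strict inequality $\alpha > 1$ from Lemma \ref{alpha is greater than one} (which in turn encodes the upper bound $s < r_1(r_1+1)/(2r_2(r_2+1))$ on the parameter $s$) that makes the contradiction close. Without this strict bound, the continuity method would fail at its right endpoint.
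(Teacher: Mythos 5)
Your proposal is correct and follows essentially the same route as the paper: evaluate the equation at a maximum point of $\psi_t$, discard the non-negative $D'_h\phi\, D''_h\phi^{\ast}$ term, observe that a blow-up sequence forces $|\phi|_{h_{t_n}}^2\to 0$ at the maximum points, and contradict $\alpha>1$ from Lemma \ref{alpha is greater than one}. The only difference is bookkeeping of the limit parameter: the paper simply takes $t_k\nearrow T>0$ (which is all that is needed for closedness), so your separate continuity argument covering a possible accumulation point $t^\ast=0$ is a harmless extra refinement rather than a new idea.
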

\begin{proof}
Suppose the maximum of $\psi_t$ is attained at $\hat{x} \in \Sigma$. Then we have $\i \p \bp \psi_t \leq 0$ and
\[
\begin{aligned}
\omega_\Sigma &\geq \omega_\Sigma+\frac{\i}{2 \pi} \p \bp \psi_t \\
&=2(1-|\phi|_{h_t}^2) \frac{\frac{\i c^2 t}{\pi} D'_t \phi D''_t \phi^{\ast_t}+su^{1-t} \omega_\Sigma}{(4cr_2-2ct|\phi|_{h_t}^2-1+4c)(4cr_2+2ct|\phi|_{h_t}^2-1)} \\
& \geq 2(1-|\phi|_{h_t}^2) \frac{su^{1-t} \omega_\Sigma}{(4cr_2-2ct|\phi|_{h_t}^2-1+4c)(4cr_2+2ct|\phi|_{h_t}^2-1)}
\end{aligned}
\]
at $\hat{x} \in \Sigma$. Now we assume that the upper bound does not hold, so there exists a sequence $t_k \nearrow T$ and $\hat{x}_k \in \Sigma$ such that
\[
\psi_{t_k}(\hat{x}_k)=\max \psi_{t_k} \to \infty, \quad \hat{x}_k \to \hat{x}_\infty
\]
for some $\hat{x}_\infty \in \Sigma$ as $k \to \infty$. Then we have $|\phi|_{h_{t_k}}^2(\hat{x}_k) \to 0$ as $k \to \infty$ and hence
\[
1 \geq \frac{2su^{1-T}}{(4cr_2-1+4c)(4cr_2-1)}=\frac{\a^T}{(1-|\phi|_{h_\Sigma}^2)^{1-T}} \geq \a^T>1
\]
at $\hat{x}_\infty$ since $\a>1$. This yields a contradiction.
\end{proof}
Assuming \eqref{lower bound for 4cr2 minus 1}, the remaining part of the proof is exactly the same as \cite[Section 4.5]{Pin20}. So we omit the proof.

\section{The deformed Hermitian--Yang--Mills equation} \label{The deformed Hermitian--Yang--Mills equation}
Let $E$ be a holomorphic vector bundle or rank $r$ over an $n$-dimensional compact K\"ahler manifold $(X,\omega)$. Recall that the deformed Hermitian--Yang--Mills (dHYM) equation for $h \in \Herm(E)$ is given by
\[
\begin{cases}
\Im \big( e^{-\i \Theta} \big(\omega \Id_E+\i F_h \big)^n \big)=0 \\
\int_X \Tr \bigg( \Re \big( e^{-\i \Theta} \big(\omega \Id_E+\i F_h \big)^n \big) \bigg)>0,
\end{cases}
\]
where $\Theta$ is a real constant. If the solution exists, then taking the trace and integrating the above equation over $X$, we know that the constant $\Theta$ must satisfies
\[
Z \in \R_{>0} \cdot e^{\i \Theta},
\]
where the complex constant $Z$ is defined by
\begin{equation} \label{complex number Z}
\begin{aligned}
Z&:=\int_X \Tr \big( (\omega \Id_E+\i F_h)^n \big) \\
&=\sum_{k=0}^n \int_X \binom{n}{k} \omega^k (\i)^{n-k} \Tr(F_h^{n-k}) \\
&=\sum_{k=0}^n \frac{n!}{k!} (\i)^{n-k} [\omega]^k \cdot \ch_{n-k}(E).
\end{aligned}
\end{equation}
In other words, the constant $\Theta$ is determined as the angle of $Z$ modulo $2 \pi$ whenever $Z \neq 0$ holds. Thus we would like to call $\Theta$ the {\it average angle}.

We recall the positivity notion for the dHYM equation introduced in \cite[Definition 2.33]{DMS20}. For any $h \in \Herm(E)$ and $a'' \in \Omega^{0,1}(\End(E))$, we define the $(n-1,n)$-form $\Phi_\omega (h,a'')$ by replacing any term of the form
\[
\omega^k \Id_E \wedge F_h^{n-k} \quad (k=0,1,\ldots,n)
\]
in $\Im \big( e^{-\i \Theta}(\omega \Id_E+\i F_h)^n \big)$ with
\[
\omega^k \Id_E \wedge \sum_{i=0}^{n-k-1} F_h^i a'' F_h^{n-k-1-i} \quad (k=0,1,\ldots,n).
\]
\begin{dfn}
We say that a Hermitian metric $h \in \Herm(E)$ is {\it dHYM-positive} if for all $a'' \in \Omega^{0,1}(\End(E))$, the $(n,n)$-form
\[
-\i \Tr \big( \Phi_\omega (h,a'') \wedge (a'')^\ast \big)
\]
is positive at all points on $X$ where $a'' \neq 0$. Also we say that for any fixed Hermitian metric $h \in \Herm(E)$, a connection $D \in \cA_{\inte}(E,h)$ is dHYM-positive if $h$ is dHYM-positive with respect to the holomorphic structure induced by $D$.
\end{dfn}
As shown in \cite[Lemma 2.34]{DMS20}, we know that the dHYM equation is elliptic for any dHYM-positive Hermitian metrics. Now we will show the following:

\begin{thm}[Theorem \ref{existence result for the dHYM equation}]
Let $E$ be a simple holomorphic vector bundle over an $n$-dimensional compact K\"ahler manifold $(X,\omega)$ satisfying the property \eqref{positivity for Chern characters}. Assume that there exists a $J$-positive Hermitian metric $h_0 \in \Herm(E)$ satisfying the $J$-equation
\[
\frac{[\omega] \cdot \ch_{n-1}(E)}{n \ch_n(E)} F_{h_0}^n-\omega \Id_E F_{h_0}^{n-1}=0.
\]
Then for sufficiently small $\e>0$, the bundle $E$ admits a dHYM-positive Hermitian metric $h_\e$ satisfying the deformed Hermitian--Yang--Mills equation with respect to $\e \omega$
\[
\Im \big(e^{-\i \Theta_\e}(\e \omega \Id_E+\i F_{h_\e})^n \big)=0
\]
for some constant $\Theta_\e$ (depending on $\e$).
\end{thm}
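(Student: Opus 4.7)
The plan is to view the dHYM equation with respect to $\e\omega$, rescaled by $1/(n\e)$, as a smooth family of equations in the parameter $\e\in[0,\e_0)$ whose $\e=0$ member is precisely the $J$-equation solved by $h_0$, and then apply the implicit function theorem on Banach manifolds as in Lemma \ref{small deformation}. Since $\ch_n(E)>0$, substituting $\omega\mapsto\e\omega$ in \eqref{complex number Z} gives
\[
Z_\e=\i^n n!\bigl(\ch_n(E)-\i\e [\omega]\cdot\ch_{n-1}(E)+O(\e^2)\bigr),
\]
which is nonzero for small $\e\ge 0$, so $\Theta_\e:=\arg Z_\e=\frac{n\pi}{2}+\alpha_\e$ is smooth in $\e$ with $\alpha_\e=-\e\frac{[\omega]\cdot\ch_{n-1}(E)}{\ch_n(E)}+O(\e^2)$, independently of $h$. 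Writing $D_0$ for the Chern connection of $h_0$, define
\[
\cP(v,\e):=\frac{1}{n\e}\Im\bigl(e^{-\i\Theta_\e}(\e\omega\Id_E+\i F_{\exp(v)\cdot D_0})^n\bigr)
\]
for $\e>0$; expanding $(-\i)^n(\e\omega\Id_E+\i F)^n=F^n-\i n\e\omega F^{n-1}+O(\e^2)$ and $e^{-\i\alpha_\e}=1-\i\alpha_\e+O(\e^2)$ shows the imaginary part equals $-\alpha_\e F^n-n\e\omega F^{n-1}+O(\e^2)$. Hence
\[
\cP(v,\e)=\frac{[\omega]\cdot\ch_{n-1}(E)}{n\ch_n(E)}F_{\exp(v)\cdot D_0}^n-\omega\Id_E F_{\exp(v)\cdot D_0}^{n-1}+O(\e),
\]
so $\cP$ extends smoothly across $\e=0$ to an operator $\cP\colon\cV^{\ell+2,\b}\times[0,\e_0)\to\cV^{\ell,\b}$ whose value at $\e=0$ is exactly the $J$-operator. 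In particular $\cP(0,0)=0$.

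Since $h_0$ is $J$-positive and $E$ is simple, the argument in the proof of Lemma \ref{small deformation} applies verbatim: the linearization $\d|_{(0,0)}\cP$ in the first variable is elliptic and self-adjoint, and its kernel is spanned by $\Id_E$, which is eliminated by the trace normalization defining $\cV^{\ell,\b}$. By the Fredholm alternative, $\d|_{(0,0)}\cP$ is therefore an isomorphism, and the implicit function theorem on Banach manifolds produces, for every sufficiently small $\e>0$, an endomorphism $v_\e\in\cV^{\ell+2,\b}$ with $\cP(v_\e,\e)=0$ and $v_\e\to 0$ in $C^{\ell+2,\b}$. Multiplying through by $n\e$, the corresponding metric $h_\e$ solves the dHYM equation \eqref{dHYM equation} with respect to $\e\omega$ and angle $\Theta_\e$.

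To complete the proof I would verify dHYM-positivity and smoothness. An identical $\e$-expansion of the dHYM-positivity integrand $-\i\Tr(\Phi_{\e\omega}(h_\e,a'')\wedge(a'')^\ast)$ shows that, after dividing by $n\e$, it tends as $\e\to 0$ to the $J$-positivity integrand for $h_0$; since the latter is strictly positive for $a''\ne 0$ by assumption, $h_\e$ is dHYM-positive for all small $\e>0$, and then $h_\e\in C^\infty$ follows from elliptic regularity in the dHYM-positive regime established in \cite{DMS20}. The main technical obstacle is the first step: one must check that the factor of $\e$ in the leading imaginary part of $(\e\omega\Id_E+\i F)^n$ (coming jointly from $\alpha_\e=O(\e)$ and from the linear-in-$\e$ term of the binomial expansion) cancels exactly against the $1/(n\e)$ rescaling, leaving behind the correct constant $[\omega]\cdot\ch_{n-1}(E)/(n\ch_n(E))$, with the analogous computation needed for the positivity integrand.
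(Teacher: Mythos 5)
Your proposal is correct and follows essentially the same route as the paper: rescale the dHYM operator by $1/(n\e)$, identify its $\e\to 0$ limit with the $J$-operator via the expansions of $\Theta_\e$ and $(\e\omega\Id_E+\i F)^n$, apply the implicit function theorem using $J$-positivity (ellipticity) and simpleness (trivial kernel after the trace normalization), and recover dHYM-positivity and smoothness from the same expansion plus elliptic regularity. The only cosmetic difference is that you work directly with $\Im\bigl(e^{-\i\Theta_\e}(\cdot)^n\bigr)$ writing $\Theta_\e=\tfrac{n\pi}{2}+\alpha_\e$, which handles all residues of $n$ uniformly, whereas the paper divides by $\cos\Theta_\e$ and treats the representative case $n=4m$ via the equivalent equation $\tan(\Theta_\e)\,\Re(\e\omega\Id_E+\i F)^n=\Im(\e\omega\Id_E+\i F)^n$; the cancellation you flag as the "main technical obstacle" is exactly the computation you already carried out, and it matches the paper's.
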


\begin{proof}
For each $\e>0$, let $Z_\e$ be a complex number defined by \eqref{complex number Z} with respect to $\e \omega$. First we check that $\Re(Z_\e)$ and $\Im(Z_\e)$ do not vanish for sufficiently small $\e>0$. Indeed,
\[
\begin{aligned}
Z_\e&=\sum_{k=0}^n \frac{n!}{k!} (\i)^{n-k} [\e \omega]^k \cdot \ch_{n-k}(E) \\
&=n! (\i)^n \ch_n(E)+n!(\i)^{n-1} [\omega] \cdot \ch_{n-1}(E) \e+O(\e^2).
\end{aligned}
\]
Thus both $\Re(Z_\e)$ and $\Im(Z_\e)$ are not zero from the assumption \eqref{positivity for Chern characters}. In particular, the constant $\Theta_\e$ is well-defined modulo $2 \pi$ and the dHYM equation is equivalent to
\begin{equation}
\tan (\Theta_\e) \cdot \Re (\e \omega \Id_E+\i F_{h_\e})^n=\Im (\e \omega \Id_E+\i F_{h_\e})^n.
\end{equation}
In what follows, we only consider the case $n=4m$ for some positive integer $m$ (one can prove other cases in the similar way). Then the constant $\Theta_\e$ satisfies $\cos (\Theta_\e)>0$, $\sin (\Theta_\e)<0$ and
\[
\tan \Theta_\e=\frac{\Im(Z_\e)}{\Re(Z_\e)}=\frac{-n! [\omega] \cdot \ch_{n-1}(E) \e+O(\e^3)}{n! \ch_n(E)+O(\e^2)}=-\frac{[\omega] \cdot \ch_{n-1}(E)}{\ch_n(E)}\e+O(\e^2).
\]
For any $\ell \in \Z_{\geq 0}$ and $\b \in (0,1)$, let $\cV^{\ell,\b}$ be a Banach manifold consisting of all $C^{\ell,\b}$ Hermitian endomorphisms on $(E,h_0)$ satisfying $\int_X \Tr (v) \omega^n=0$. Define an operator $\cP \colon \cV^{\ell+2,\b} \times \R \to \cV^{\ell,\b}$ by
\[
\cP(v,\e):=\frac{1}{n \e} \bigg( \Im (\e \omega \Id_E+\i F_D)^n-\tan (\Theta_\e) \cdot \Re (\e \omega \Id_E+\i F_D)^n \bigg),
\]
where $D_0$ denotes the induced connection of $h_0$ and $D:=D_0^{\exp(v)}$. A direct computation shows that
\begin{equation} \label{expansion of Re and Im}
\Re (\e \omega \Id_E+\i F_D)^n=F_D^n+O(\e^2), \quad \Im (\e \omega \Id_E+\i F_D)^n=-n \omega \Id_E F_D^{n-1} \e+O(\e^3).
\end{equation}
Thus we obtain
\[
\cP(v,\e)=\frac{[\omega] \cdot \ch_{n-1}(E)}{n \ch_n(E)} F_D^n-\omega \Id_E F_D^{n-1}+O(\e).
\]
This shows that $\cP$ is well-defined and $\cP(0,0)=0$. The linearization of $\cP$ at $(0,0)$ in the direction $v$ is given by
\[
\d|_{(0,0),v} \cP=\frac{\i}{2 \pi} \bigg( c \sum_{k=0}^{n-1} F_{D_0}^k (D_0' D_0''v-D_0'' D_0' v) F_{D_0}^{n-1-k}-\omega \Id_E \sum_{k=0}^{n-2} F_{D_0}^k (D_0' D_0''v-D_0'' D_0' v) F_{D_0}^{n-2-k} \bigg).
\]
So by the simpleness of $E$, we can apply the same argument as in Lemma \ref{small deformation} to find that $\d|_{(0,0)} \cP$ is an isomorphism.

Finally, we will check the dHYM-positivity condition. Let $D_\e$ be the connection constructed as above. Then by using \eqref{expansion of Re and Im} and the convergence $D_\e \to D_0$ in $C^{\ell+2,\b}$, for any $a'' \in \Omega^{0,1}(\End(E))$ we compute
\[
\begin{aligned}
&-\i \Tr \big( \Phi_{\e \omega} (D_\e,a'') \wedge (a'')^\ast \big) \\
&=\i \cos (\Theta_\e) \bigg( \tan(\Theta_\e) \sum_{k=0}^{n-1} F_{D_\e}^k a'' F_{D_\e}^{n-1-k} (a'')^\ast+n \omega \Id_E \sum_{k=0}^{n-2} F_{D_\e}^k a'' F_{D_\e}^{n-2-k} (a'')^\ast \e+O(\e^2) \bigg) \\
&=-\i n \e \cos (\Theta_\e) \bigg( c \sum_{k=0}^{n-1} F_{D_0}^k a'' F_{D_0}^{n-1-k} (a'')^\ast-\omega \Id_E \sum_{k=0}^{n-2} F_{D_0}^k a'' F_{D_0}^{n-2-k} (a'')^\ast+O(\e) \bigg).
\end{aligned}
\]
Since $\cos (\Theta_\e)>0$ and $D_0$ is $J$-positive, this $(n,n)$-form is positive at all points on $X$ where $a'' \neq 0$. In particular, the elliptic regularity shows that $D_\e$ is actually smooth. This completes the proof.
\end{proof}
                                                                                                                                                                                                                                                                                                                                                                                                                                                                                                                                                                                                                                                                                                                                                                                                                                                                                                                                                                                                                                                                                                                                                                                                                                                                                                                                                                                                                                                                                                                                                                                                                                                                                                                                                                                                                                                                                                                                                                                                                                                                                                                                                                                                                                                                                                                                                                                                                                                                                                                                                                                                                                                                                                                                                                                                                                                                                                                                                                                                                                                                                                                                                                                                                                                                                                                                                                                                                                                                                                                                                                                                                                                                                                                                                                                                                                                                                                                                                                                                                                                                 
\newpage

\end{document}